\newtheorem{proposition}{Proposition}
\newtheorem{theorem}{Theorem}
\newtheorem{corollary}{Corollary}
\newtheorem{definition}{Definition}
\def\One{\mathbbm{1}} 
\newcommand{\weakconv}{ \underset{n \rightarrow \infty}{\overset{*}{\longrightarrow}}}
\newcommand{\LT}{\mathrm{L}_T}
\newcommand{\LTinv}{\mathrm{L}_{\mathrm{inv},T}}
\newcommand{\LSinv}{\mathrm{L}_{\mathrm{inv},S}}
\newcommand{\LS}{\mathrm{L}_S}
\newcommand{\MR}{\mathcal{M}(\mathbb{R})}
\newcommand{\MS}{\mathcal{M}(\mathbb{T})}
\newcommand{\MLT}{\mathcal{M}_{\LT}(\mathbb{R})}
\newcommand{\MLS}{\mathcal{M}_{\LS}(\mathbb{T})}
\newcommand{\MLSz}{\mathcal{M}_{\LS,0}(\mathbb{T})}
\newcommand{\MLTLS}{\mathcal{M}{(\LT,\LS)}}
\newcommand{\MhThS}{\mathcal{M}_{h_T,h_S}{(\LT,\LS)}}
\newcommand{\CR}{\mathcal{C}_0(\mathbb{R})}
\newcommand{\CT}{\mathcal{C}(\mathbb{T})}
\newcommand{\CLT}{\mathcal{C}_{\LT}(\mathbb{R})}
\newcommand{\CLS}{\mathcal{C}_{\LS}(\mathbb{T})}
\newcommand{\CLSz}{\mathcal{C}_{\LS,0}(\mathbb{T})}
\newcommand{\CLTLS}{\mathcal{C}{(\LT,\LS)}}
\newcommand{\R}{ \mathbb{R}}
\newcommand{\T}{ \mathbb{T}}
\newcommand{\Z}{ \mathbb{Z}}
\newcommand{\N}{ \mathbb{N}}
\newcommand{\Per}{\mathrm{Per}}
\newcommand{\Lop}{{\rm L}}
\newcommand{\Dop}{{\rm D}}
\newcommand{\Span}{\mathrm{Span}}
\newcommand{\Sp}{ \mathcal{S}}
\title{Variational Seasonal-Trend Decomposition with \\  Sparse Continuous-Domain Regularization}
\author{Julien Fageot}
\begin{document}

\maketitle

\date{} 
\begin{abstract}
We consider the inverse problem of recovering a continuous-domain function from a finite number of noisy linear measurements. The unknown signal is modeled as the sum of a slowly varying trend and a periodic or quasi-periodic seasonal component. We formulate a variational framework for their joint recovery by introducing convex regularizations based on generalized total variation, which promote sparsity in spline-like representations. Our analysis is conducted in an infinite-dimensional setting and leads to a representer theorem showing that minimizers are splines in both components. To make the approach numerically feasible, we introduce a family of discrete approximations and prove their convergence to the original problem in the sense of $\Gamma$-convergence. This further ensures the uniform convergence of discrete solutions to their continuous counterparts. The proposed framework offers a principled approach to seasonal-trend decomposition in the presence of noise and limited measurements, with theoretical guarantees on both representation and discretization.
\end{abstract}

\section{Introduction}\label{sec:intro}

We develop variational methods for reconstructing a continuous-domain function 
$f_0 : \R \rightarrow \R$, assumed to be the sum $f_0 = f_{0,T} + f_{0,S}$ of two components: a trend $f_{0,T}$ and a seasonal part $f_{0,S}$. The seasonal component is assumed to be periodic with known period, which we normalize to $1$ without loss of generality. We are given access to finite-dimensional, possibly corrupted linear measurements in the form of an observation vector $\bm{y} \in \R^L$. These measurements correspond to a linear sensing vector
\begin{equation}
    \bm{\Phi}(f_0) = (\phi_1(f_0), \ldots , \phi_L(f_0)),
\end{equation}
where each $\phi_\ell$ is a continuous linear functional acting on $f_0$. Our objective is to recover both the trend and seasonal components jointly, using a continuous-domain variational framework based on sparsity-promoting regularization.

\subsection{Seasonal-Trend Sparse Inverse Problem} \label{sec:inverseproblem}

We formulate the recovery of $(f_{0,T},f_{0,S})$ as a functional inverse problem. The seasonal component $f_{0,S}$ is modeled as a function $f_{0,S} : \mathbb{T} \rightarrow \mathbb{R}$ defined on the circle $\mathbb{T}$, reflecting its periodicity of period $1$. The reconstruction is posed as the solution to the optimization problem
\begin{equation} \label{eq:firstoptiproblem}
    \underset{(f_T,f_S)}{\arg \min} \quad \mathcal{E}\left( \Phi(f_T+f_S) \right) + \lambda_T   \lVert \LT f_T \rVert_{\MR}  + \lambda_S  \lVert \LS f_S \rVert_{\MS},
\end{equation}
where:
\begin{itemize}
    \item The data-fidelity term $\mathcal{E}\left( \Phi(f_T+f_S) \right)$ encourages the synthesized measurements
    \begin{equation}
        \Phi(f_T+f_S) = (\phi_1(f_T+f_S), \ldots , \phi_L (f_T+f_S)) \in \R^L
    \end{equation}
    to match the observations $\bm{y}$. A common choice for $\mathcal{E}$ is the quadratic loss
    \begin{equation} \label{eq:quadracost}
        \mathcal{E}\left( \bm{z} \right) = \| \bm{y} -  \bm{z} \|^2 = \sum_{\ell=1}^L (y_\ell -  z_\ell )^2,
    \end{equation}
    which corresponds to a Gaussian white noise model in a Bayesian interpretation.

    \item The regularization term
    \begin{equation}
        \mathcal{R}(f_T,f_S) =  \lambda_T \lVert \LT f_T \rVert_{\MR} +  \lambda_S \lVert \LS f_S \rVert_{\MS}
    \end{equation}
    imposes structure on the trend and seasonal components via the differential operators $\LT$ and $\LS$. The norms $\lVert \cdot \rVert_{\MR}$ and $\lVert \cdot \rVert_{\MS}$ are continuous-domain generalizations of the $\ell_1$ norm, promoting sparsity in the sense of Radon measures. The former promotes piecewise-smooth or spline-like structure in $f_T$, while the latter enforces periodicity in $f_S$~\cite{Fisher1975}.

    \item The parameters $\lambda_T > 0$ and $\lambda_S > 0$ control the trade-off between data fidelity, trend regularization, and seasonal regularization.
\end{itemize}

A solution $(\tilde{f}_T, \tilde{f}_S)$ to \eqref{eq:firstoptiproblem} is expected to approximate the true decomposition $(f_{0,T}, f_{0,S})$, and hence $\tilde{f} = \tilde{f}_T + \tilde{f}_S$ provides a faithful reconstruction of $f_0 = f_{0,T} + f_{0,S}$. To the best of our knowledge, total-variation regularization has not been previously used for seasonal-trend decomposition in the continuous domain. Yet, as discussed in Section~\ref{sec:related}, there is strong theoretical and empirical motivation for such an approach.

\subsection{Related Works} \label{sec:related}

Our work aims at bridging functional inverse problems based on total-variation (TV) regularization with the reconstruction and decomposition of seasonal-trend signals. \\

\textit{TV-based regularization: from Dirac impulses to splines.}
TV regularization was initially introduced as a prior for recovering Dirac impulses from linear observations. In particular, optimization problems of the form $\min_{w \in \mathcal{C}} \| w \|_{\MR}$, where $\mathcal{C}$ is a weak*-compact convex subset of the space $\MR$ of Radon measures, admit Dirac stream solutions.

The roots of TV-based convex optimization can be traced back to the pioneering works~\cite{beurling1938integrales,Zuhovickii1948,Fisher1975}. 
Inspired by analogous results in discrete compressed sensing~\cite{Donoho2006,Candes2006sparse,Foucart2013mathematical} and sparse statistical learning~\cite{tibshirani1996regression,chen2001atomic,Elad10}, several authors have explored convex optimization techniques for the recovery of Dirac measures~\cite{Bredies2013inverse,deCastro2012exact,candes2014towards,Duval2015exact,Azais2015Spike,denoyelle2019sliding,debarre2022part1}.

More recently, these frameworks have been extended to spline recovery~\cite{unser2017splines,gupta2018continuous,flinth2019exact,fageot2020tv,simeoni2021functional}, by considering generalized TV-type regularizations of the form
\begin{equation}
    f \mapsto \| \Lop f \|_{\MR},
\end{equation}
where $\Lop$ is a pseudo-differential operator.
Both non-periodic~\cite{unser2017splines} and periodic~\cite{fageot2020tv} settings have been considered, and these constructions serve as the foundation of our seasonal-trend framework.

As detailed in Section~\ref{sec:contributions}, our methodology follows the standard steps involved in TV-based continuous-domain inverse problems: the specification of the native space of the problem along with its measurement space~\cite{unser2017splines,unser2019native,fageot2020tv}, the derivation of a representer theorem characterizing the solutions~\cite{Fisher1975,unser2017splines,gupta2018continuous,boyer2019representer,bredies2020sparsity,debarre2022sparsest}, and optention of grid-based convergence results leading to the design of grid-based algorithms for practical reconstruction~\cite{duval2017sparseI,Debarre2019,simeoni2021functional,debarre2022part2,Flinth2025Grid, guillemet2025convergence}. \\

\textit{Multi-component optimization.}
Multi-component approaches aim at recovering a signal $s = s_1 + s_2$ expressed as the superposition of two (or more) underlying components with distinct structural properties. 
In the discrete setting, such decompositions are related to source separation~\cite[Chapter 11]{eldar2012compressed} and have been explored in compressed sensing under the frameworks of sparse representations in unions of dictionaries~\cite{gribonval2003sparse,lu2008theory,elad2010role}, or sparse + smooth recovery~\cite{gholami2013balanced,grasmair2020adaptive,debarnot2020learning}.

In the continuous-domain setting, TV-based multi-component reconstruction was first considered in~\cite{debarre2019hybrid} for hybrid spline decompositions, and has been applied to supervised learning~\cite{aziznejad2021multikernel} and sparse + smooth signal recovery~\cite{debarre2021continuous}.
Theoretical foundations of such approaches—including existence, uniqueness, and structural properties of solutions—have been developed in~\cite{unser2022convex}.

To the best of our knowledge, this is the first attempt to extend such a multi-component TV-regularized framework to seasonal-trend signal decomposition. \\

\textit{Seasonal-trend decomposition.}
A key novelty of our work is to bring a seasonal-trend perspective into the realm of continuous-domain inverse problems regularized by total variation.

\subsection{Contributions and Outline} \label{sec:contributions}

We develop the mathematical analysis of functional inverse problems for sparsity-promoting seasonal-trend reconstruction. A key feature of our approach is its generality, with particular attention paid to covering broad classes of regularization operators $(\LT,\LS)$, measurement functionals $\phi_\ell$, and convex data-fidelity terms $\mathcal{E}$.

\begin{itemize}
    \item \textbf{Functional-analytic foundations:} We introduce the general framework required to properly define the variational problem~\eqref{eq:firstoptiproblem}. This involves specifying the \textit{native Banach space} under which the optimization is conducted, as well as its joint \textit{measurement space}, where the measurement functionals $\phi_\ell$ reside. We uncover the inherent direct sum structure of the problem, ensuring the identifiability of the seasonal-trend model.

    \item \textbf{Seasonal-trend representer theorem:} We establish a representer theorem, demonstrating that the variational problem~\eqref{eq:firstoptiproblem} admits composite sparse spline solutions for both the seasonal and trend components.

    \item \textbf{Quadratic representer theorem:} We compare the use of total-variation regularization norms with quadratic ones, which are more classical and linked to kernel methods. From this, we derive a quadratic representer theorem for seasonal-trend decomposition, showing that the unique solution of the quadratic variational problem strongly couples the seasonal and trend components, thereby limiting the utility of kernel methods.

    \item \textbf{Theoretical analysis of grid-based approximation:} To solve the optimization task~\eqref{eq:firstoptiproblem} efficiently, we introduce a grid-based approximation scheme and perform a convergence analysis. We demonstrate the $\Gamma$-convergence of grid-based schemes and, under mild assumptions, deduce the pointwise and uniform convergence of the grid-based solutions towards the continuous-domain ones, for both the trend and seasonal components.

\end{itemize}

The paper is organized as follows:
After introducing the relevant mathematical concepts in Section~\ref{sec:prelim}, we present the seasonal-trend native spaces for optimization problems of the form~\eqref{eq:firstoptiproblem} in Section~\ref{sec:native}. In Section~\ref{sec:sensing}, we characterize the sensing functionals $\phi_\ell$ used in the observation process. The Representer Theorem for~\eqref{eq:firstoptiproblem}, which specifies the form of its extreme point solutions, is presented in Section~\ref{sec:RT}. Finally, we study the convergence properties of grid-based methods for approximating solutions to~\eqref{eq:firstoptiproblem} in Section~\ref{sec:gridbased}.

\section{Functional Analytic Preliminaries}\label{sec:prelim}

    \subsection{Tempered Generalized Functions} \label{sec:schwartz}

    We begin by introducing the Schwartz space of rapidly decaying smooth functions and its dual, along with their periodic counterparts. For a concise introduction to the subject, we refer the reader to~\cite{Simon2003distributions}. It is important to note that these spaces will primarily serve as the space of "nice" test functions and the space of all possible linear functionals encountered in this work.\\

    Let $\Dop^N$ denote the $N$th derivative. The Schwartz space $\Sp(\R)$ consists of infinitely smooth, rapidly decaying functions, endowed with the Fréchet topology associated with the family of semi-norms
    $$ \varphi \in \Sp(\R) \mapsto \sup_{t \in \R} |t^M \Dop^N \varphi (t) | \quad \text{for} \quad M, N \geq 0. $$

    The topological dual of $\Sp(\R)$ is the space of tempered generalized functions, denoted by $\Sp'(\R)$, and it is endowed with the weak*-topology. The duality pairing between $f \in \Sp'(\R)$ and $\varphi \in \Sp(\R)$ is denoted by $\langle f, \varphi \rangle_{\Sp'(\R)\times \Sp(\R)}$. Any generalized function encountered in this work is identified as a tempered generalized function through its action on test functions $\varphi \in \Sp(\R)$. The function spaces considered in this work are all embedded in $\Sp'(\R)$.

    We also define the space of smooth $1$-periodic functions $\Sp(\T)$, where $\T = [0,1]$ is the $1$-dimensional torus, with the endpoints $0$ and $1$ identified. The space $\Sp(\T)$ is a Fréchet space, equipped with the topology associated with the family of semi-norms
    $$ \varphi \in \Sp(\T) \mapsto \sup_{t \in \R} | \Dop^N \varphi (t) | \quad \text{for} \quad N \geq 0. $$

    The topological dual of $\Sp(\T)$ is the space $\Sp'(\T)$ of $1$-periodic generalized functions, which is also endowed with the weak*-topology. The duality pairing between $f \in \Sp'(\T)$ and $\varphi \in \Sp(\T)$ is denoted by $\langle f, \varphi \rangle_{\Sp'(\T)\times \Sp(\T)}$.

    We have the topological embeddings $\Sp(\T) \subseteq \Sp'(\T) \subseteq \Sp'(\R)$. Thus, a periodic generalized function $f \in \Sp'(\T)$ can be viewed as a tempered generalized function by its action on test functions in $\Sp(\R)$, rather than just on $\Sp(\T)$. Furthermore, for $f \in \Sp'(\T) \subseteq \Sp'(\R)$ and $\varphi \in \Sp(\R)$, we have
    \begin{equation}
        \langle f, \varphi \rangle_{\Sp'(\R) \times \Sp(\R)} = \langle f, \Per\{\varphi\} \rangle_{\Sp'(\T) \times \Sp(\T)},
    \end{equation}
    where $\Per: \Sp(\R) \to \Sp(\T)$ is the periodization operator defined by
    $$ \Per \{\varphi\} = \sum_{k \in \Z} \varphi(\cdot - k). $$

    \subsection{Linear Shift-Invariant Operators} \label{sec:LSI}
    
    Let $\Lop$ be a linear, shift-invariant (LSI), and continuous operator from $\Sp'(\R)$ to itself. Then, $\Lop$ is a convolution operator of the form
    \begin{equation} \label{eq:convolution}
        \Lop f = h * f
    \end{equation}
    where the impulse response $h$ of $\Lop$ belongs to the space $\mathcal{O}_C'(\R)$ of rapidly decreasing generalized functions~\cite{fischer2015duality}. This space includes the Dirac delta $\delta$, for which $\Lop = \mathrm{Id}$ is the identity operator, and its derivatives $\Dop^N \delta = \delta^{(N)}$ of any order $N \geq 1$, for which $\Lop = \Dop^N$. More generally, any compactly supported generalized function is in $\mathcal{O}_C'(\R)$, which also includes rapidly decaying smooth functions such as $h(t) = \exp(- t^2/2)$.

    The space $\mathcal{O}_C'(\R)$ is the Fourier transform of the space $\mathcal{O}_M(\R)$ of infinitely smooth and slowly growing functions\footnote{A function $g : \R \to \R$ is in $\mathcal{O}_M(\R)$ if it is infinitely smooth, and if every derivative $\Dop^N g$ grows no faster than a polynomial.}. Both $\mathcal{O}_C'(\R)$ and $\mathcal{O}_M(\R)$ have been studied in~\cite[Chapter VII-5]{Schwartz1966distributions} and correspond to convolution and multiplication operators from $\Sp'(\R)$ to itself\footnote{This explains the subscripts $C$ (for convolution) and $M$ (for multiplication) in the definition of these spaces.}.

    The frequency response $\widehat{L}$ of a continuous LSI operator $\Lop$ is the Fourier transform $\widehat{L} = \widehat{h}$ of its impulse response $h$. The frequency response $\widehat{L}$ belongs to $\mathcal{O}_M(\R)$, and satisfies $\widehat{\Lop f} = \widehat{L} \widehat{f}$ for any $f \in \Sp'(\R)$.

\subsection{Periodic and Non-Periodic Radon Measures} \label{sec:radon}

The space of Radon measures is denoted by $\MR$. It is the topological dual of the space $\CR$ of continuous functions vanishing at infinity, endowed with the supremum norm~\cite{gray1984shaping}. We denote by $\langle w , g \rangle_{\MR, \CR} = \int_{\R} \varphi(t) \, \mathrm{d} w (t)$ the duality product between a function $g \in \CR$ and a Radon measure $w \in \MR$.

The space of Radon measures is a Banach space for the total variation norm defined by
\begin{equation}
    \| w \|_{\MR} = \sup_{\varphi \in \CR, \ \| g \|_\infty = 1} \langle w , g \rangle_{\MR \times \CR}.
\end{equation}
Radon measures include integrable functions $f \in L_1(\R)$, for which $\| f \|_{\MR} = \|f\|_{L_1(\R)} = \int_{\R} |f(t)| \, \mathrm{d} t$, and shifted Dirac impulses $\delta(\cdot - x_0)$ for any $x_0 \in \R$. For a sparse Radon measure $w = \sum_{k=1}^K a_k \delta(\cdot - t_k)$ with weights $\bm{a}= (a_1,\ldots , a_K)\in \R^K$ and distinct knots $t_k \in \R$, we have
\begin{equation}
    \left\lVert \sum_{k=1}^K a_k \delta(\cdot - t_k) \right\rVert_{\MR} = \sum_{k=1}^K |a_k| = \lVert \bm{a} \rVert_{1}.
\end{equation}
In this sense, the total variation norm is the functional extension of the discrete $\ell_1$ norm.

We also consider the space $\MS$ of \emph{periodic} Radon measures. It is the topological dual of the space $\CT$ of periodic continuous functions for the supremum norm. The space $\MS$ includes periodic integrable functions $f \in L_1(\T)$ and any shifted version of the Dirac stream $\mathrm{III} = \sum_{k \in \Z} \delta (\cdot-k) = \Per \{\delta\}$. The total variation norm of a periodic Radon measure is denoted by
\begin{equation}
    \| w \|_{\MS} = \sup_{g \in \CT, \ \|g\|_\infty = 1} \langle w , g \rangle_{\MS \times \CT},
\end{equation}
where the duality product stands for $\langle w , g \rangle_{\MS \times \CT} = \int_\T g(t) \, \mathrm{d}w(t)$. A sparse periodic Radon measure $w = \sum_{k=1}^K a_k \mathrm{III}(\cdot - t_k)$ with weights $\bm{a}= (a_1,\ldots , a_K)\in \R^K$ and distinct knots $t_k \in \T$ satisfies
\begin{equation}
    \left\lVert  \sum_{k=1}^K a_k \mathrm{III}(\cdot - t_k) \right\rVert_{\MS} = \sum_{k=1}^K |a_k| = \lVert \bm{a} \rVert_{1}.
\end{equation}

Finally, we have the topological embeddings between Schwartz and Radon spaces:
\begin{align}
    \Sp(\R) \subseteq  \MR \subseteq \Sp'(\R) \quad \text{and} \quad 
    \Sp(\T) \subseteq  \MS \subseteq \Sp'(\T).
\end{align}

\section{Native Spaces for Seasonal-Trend Decomposition}\label{sec:native}

We introduce the functional-analytic framework for the specification of the native space of the variational problem~\eqref{eq:firstoptiproblem}. The optimization is made over pairs of functions $(f_T,f_S)$, respectively the trend and seasonal components. 
Our methods are suitable for signals resulting from the superposition of periodic and acyclic processes depending on continuous-domain variables.

\subsection{Trend Operators, Splines, and Native Spaces} \label{sec:trendop}

\paragraph{Trend Operators.}
We consider LSI and continuous operators $\LT : \Sp'(\R) \rightarrow \Sp'(\R)$. The frequency response of $\LT$ is denoted by $\widehat{L}_T$ and is in $\mathcal{O}_M(\R)$ (see Section~\ref{sec:LSI}).

\begin{definition}
\label{def:trendop}
Let $\LT : \Sp'(\R) \rightarrow \Sp'(\R)$ be an LSI and continuous operator. Then, $\LT$ is \emph{trend-admissible of order $N_T \geq 0$} if it can be written as 
\begin{equation} \label{eq:LTdecomposed}
    \LT = \Dop^{N_T} \LTinv
\end{equation}
where $\LTinv : \Sp'(\R) \rightarrow \Sp'(\R)$ is LSI, continuous, and invertible. 
\end{definition}

The invertibility of $\LTinv$ means that there exists an LSI and continuous operator $\LTinv^{-1} : \Sp'(\R) \rightarrow \Sp'(\R)$ such that $\LTinv \LTinv^{-1} f =  \LTinv^{-1} \LTinv f = f$ for any $f \in \Sp'(\R)$. This implies that the frequency response $\omega \mapsto \widehat{L}_{\mathrm{inv},T}(\omega)$ does not vanish on $\R$. The frequency response of the inverse is then the function $\widehat{L}_{\mathrm{inv},T}^{-1} \in \mathcal{O}_M(\R)$. The invertibility condition therefore imposes that $\widehat{L}_{\mathrm{inv},T}^{-1}$ is slowly growing, hence $\widehat{L}_{\mathrm{inv},T}$ cannot vanish faster than any polynomial. This excludes, for instance, the frequency response $\widehat{L}_{\mathrm{inv},T}(\omega) = \exp(- \omega^2/2)$ corresponding to an operator $\Lop f = h * g$ with $h(t) = \frac{1}{\sqrt{2\pi}} \exp( - t^2 / 2)$. 

The frequency response of $\LT$ is $\widehat{L}_T(\omega) = (\mathrm{i}\omega)^{N_T} \widehat{L}_{\mathrm{inv},T}(\omega)$. $\LT$ is invertible if and only if its trend-admissibility order is ${N_T} = 0$. 
Invertible trend-admissible operators include the Sobolev operators $\LT = (\mathrm{Id} - \Delta)^{\gamma/2}$ with $\gamma \in \R$, whose frequency response is $\widehat{L}_T (\omega) = (1 + \omega^2)^{\gamma / 2}$. We also consider non-invertible trend-admissible operators to include the derivative operators $\Dop^{N_T}$. The null space of a trend-admissible operator of order ${N_T} \geq 1$ is the space $\mathcal{P}_{N_T}(\R)$ of polynomials up to degree $({N_T} - 1)$, whose dimension is $N_T$.

The composition of two trend-admissible operators with orders $N_{T,1}, N_{T,2} \geq 0$ is trend-admissible with order $N_{T,1} + N_{T,2}$, and its frequency response is the product of the two underlying frequency responses.

\subsection{Seasonal Operators, Splines, and Native Spaces} \label{sec:seasonoop}

\paragraph{Seasonal Operators.}
    We consider LSI and continuous \textit{periodic} operators \( \LS : \Sp'(\T) \rightarrow \Sp'(\T) \) acting on periodic generalized functions. The complex exponentials \( e_n(t) = \mathrm{e}^{ 2 \mathrm{i} \pi n t} \), \( n \in \mathbb{Z} \), are the eigenfunctions of such operators and we have that \( \LS e_n = \widehat{L}_S[n] e_n \) for some \( \widehat{L}_S[n] \in \mathbb{C} \). We call \( \left(\widehat{L}_S[n] \right)_{n\in \mathbb{Z}} \) the \textit{frequency sequence} of \( \LS \).

    \begin{definition}
    \label{def:trendop}
    Let \( \LS : \Sp'(\T) \rightarrow \Sp'(\T) \) be an LSI and continuous periodic operator. Then, \( \LS \) is \emph{seasonal-admissible of order \( N_S \geq 0 \)} if it can be written as 
    \begin{equation}
        \LS = \Dop^{N_S} \LSinv
    \end{equation}
    where \( \LSinv : \Sp'(\T) \rightarrow \Sp'(\T) \) is LSI, continuous, and invertible. 
    \end{definition}
    
    The invertibility of \( \LSinv \) means that there exists an LSI and continuous periodic operator \( \LSinv^{-1} : \Sp'(\T) \rightarrow \Sp'(\T) \) such that \( \LSinv \LSinv^{-1} f =  \LSinv^{-1} \LSinv f = f \) for any \( f \in \Sp'(\T) \). This implies that \( \widehat{L}_S[n] \neq 0 \) for \( n \in \mathbb{Z} \) and the frequency sequence of the inverse of \( \LSinv^{-1} \) is then the sequence \( \left( \frac{1}{\widehat{L}_S[n]} \right)_{n\in \mathbb{Z}} \). 
    
    A seasonal-admissible operator \( \LS \) is invertible if and only if its seasonal-admissibility order \( N_S = 0 \). The periodic Sobolev operators \( \LS = (\mathrm{Id} - \Delta)^{\gamma/2} \) with \( \gamma \in \mathbb{R} \) are invertible and their frequency sequence is \( \widehat{L}_S[n] = (1 + 4 \pi^2 n^2)^{\gamma / 2} \).
    
    Non-invertible seasonal-admissible operators include the derivative operators \( \Dop^{N_S} \), whose frequency sequence is \( ((2\pi n)^{N_S})_{n \in \mathbb{Z}} \). The null space of a non-invertible seasonal-admissible operator is of dimension 1 and made of constant functions. This is a significant distinction with the case of trend operators.
    
    The composition of two seasonal-admissible operators with order \( N_1, N_2 \geq 0 \) is seasonal-admissible with order \( N_1 + N_2 \), and its frequency sequence is the pointwise product of the two frequency sequences of the underlying operators.
    
    A linear and shift-invariant operator \( \Lop : \Sp'(\R) \rightarrow \Sp'(\R) \) specifies a periodic operator by restricting its domain to \( \Sp'(\T) \). Moreover, we have the following result.
    
    \begin{proposition}
    Let \( \Lop \) be a trend-admissible operator with trend-admissibility order \( N_T \geq 0 \). Then, the following facts hold:
    \begin{itemize}
        \item The operator \( \Lop \) is seasonal-admissible when seen as a periodic operator. 
        
        \item Its seasonal-admissibility order equals its trend-admissibility order.
        
        \item The periodic null space of \( \Lop \) is trivial if \( N_T = 0 \) and made of constant functions otherwise. The dimension of the periodic null space is therefore \( \min (1, N_T) \). 
        
        \item In particular, \( \Lop \) is invertible as an operator on \( \Sp'(\R) \), if and only if it is invertible as a periodic operator on \( \Sp'(\T) \).
    \end{itemize}
    \end{proposition}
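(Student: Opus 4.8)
The plan is to transport the defining factorization $\LT = \Dop^{N_T}\LTinv$ of a trend-admissible operator to the periodic setting by restriction, and to read off all four claims from the diagonal action of LSI operators on the Fourier basis. The bridge between the two settings is the elementary identity
\begin{equation}
    \Lop\, e_n = \widehat{L}_T(2\pi n)\, e_n, \qquad n \in \Z,
\end{equation}
which holds because $e_n \in \Sp'(\T) \subseteq \Sp'(\R)$ has Fourier transform a Dirac mass at $\omega = 2\pi n$, and $\widehat{L}_T \in \mathcal{O}_M(\R)$ is smooth, so that $\widehat{h}\,\widehat{e_n}$ selects the value $\widehat{L}_T(2\pi n)$. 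Consequently, the frequency sequence of $\Lop$ seen as a periodic operator is the sampled frequency response $\widehat{L}_S[n] = \widehat{L}_T(2\pi n) = (2\pi \mathrm{i} n)^{N_T}\, \widehat{L}_{\mathrm{inv},T}(2\pi n)$.

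\textbf{Seasonal-admissibility and its order (first two bullets).} I would set $\LSinv$ to be the restriction of $\LTinv$ to $\Sp'(\T)$, which is LSI, continuous, and periodic by the restriction principle recalled just before the statement. The key point is that invertibility is inherited: since $\LTinv$ and its inverse $\LTinv^{-1}$ both preserve periodicity, restriction commutes with composition, so $(\LTinv|_{\Sp'(\T)})(\LTinv^{-1}|_{\Sp'(\T)}) = (\LTinv \LTinv^{-1})|_{\Sp'(\T)} = \Id|_{\Sp'(\T)}$ and likewise in the other order. Hence $\LTinv^{-1}|_{\Sp'(\T)}$ is an LSI, continuous, periodic two-sided inverse of $\LSinv$. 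Restricting the whole factorization then gives $\Lop|_{\Sp'(\T)} = \Dop^{N_T}|_{\Sp'(\T)} \circ \LSinv$, exhibiting $\Lop$ as seasonal-admissible of order $N_S = N_T$. Equivalently, one may argue on frequency sequences: $\widehat{L}_{\mathrm{inv},T}$ and $\widehat{L}_{\mathrm{inv},T}^{-1}$ lie in $\mathcal{O}_M(\R)$ and hence sample to polynomially bounded, nonvanishing sequences that define mutually inverse continuous periodic operators.

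\textbf{Periodic null space (third bullet).} Writing $f \in \Sp'(\T)$ via its Fourier series $f = \sum_{n} \widehat{f}[n]\, e_n$ with polynomially bounded coefficients, the diagonal action gives $\Lop f = \sum_n \widehat{L}_S[n]\, \widehat{f}[n]\, e_n$, so that $\Lop f = 0$ if and only if $\widehat{f}[n] = 0$ for every $n$ with $\widehat{L}_S[n] \neq 0$. Since $\widehat{L}_{\mathrm{inv},T}$ is nonvanishing on all of $\R$, the factor $\widehat{L}_{\mathrm{inv},T}(2\pi n)$ never vanishes, and $\widehat{L}_S[n] = (2\pi \mathrm{i} n)^{N_T}\widehat{L}_{\mathrm{inv},T}(2\pi n)$ vanishes exactly when $(2\pi \mathrm{i} n)^{N_T} = 0$, i.e. when $n = 0$ and $N_T \geq 1$. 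Thus for $N_T = 0$ the null space is $\{0\}$, while for $N_T \geq 1$ it consists precisely of the functions with $\widehat{f}[n]=0$ for all $n \neq 0$, that is the constants; its dimension is therefore $\min(1,N_T)$.

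\textbf{Equivalence of invertibility (fourth bullet) and main obstacle.} This becomes immediate once $N_S = N_T$ is established: the text records that a trend-admissible operator is invertible on $\Sp'(\R)$ if and only if $N_T = 0$, and that a seasonal-admissible operator is invertible on $\Sp'(\T)$ if and only if $N_S = 0$, and these two conditions coincide. I expect the only genuinely delicate point to be the invertibility transfer in the first two bullets: one must ensure the inverse periodic operator is again \emph{continuous} and LSI, not merely defined pointwise on eigenvalues. The restriction-commutes-with-composition argument resolves this cleanly by producing the inverse as a restriction of the already-continuous operator $\LTinv^{-1}$, thereby avoiding any direct growth estimate on the sampled sequence $1/\widehat{L}_{\mathrm{inv},T}(2\pi n)$.
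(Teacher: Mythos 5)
Your proposal is correct, and it diverges from the paper's proof at precisely the step you identified as delicate: transferring the invertibility of $\LTinv$ to the periodic setting. The paper proves this quantitatively: it sandwiches the frequency response, $\frac{A}{1+|\omega|^p} \leq \widehat{L}_{\mathrm{inv}}(\omega) \leq B(1+|\omega|^p)$, samples these bounds at $\omega = 2\pi n$ to get polynomial lower and upper bounds on the frequency sequence, and then invokes the criterion from the periodic theory (Fageot--Simeoni, cited as~\cite{fageot2020tv} in the text) stating that such a sequence defines an invertible continuous periodic operator. Your argument replaces all of this by the structural observation that restriction commutes with composition: since $\LTinv^{-1}$ is itself LSI and continuous on $\Sp'(\R)$, it preserves $\Sp'(\T)$, and its restriction is automatically a two-sided continuous inverse of $\LSinv = \LTinv|_{\Sp'(\T)}$. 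This is cleaner and more self-contained—it needs no growth estimate and no external invertibility criterion—and it also sidesteps a cosmetic flaw in the paper's estimate (the bounds are written without absolute values although frequency responses may be complex-valued). What the paper's route buys in exchange is the explicit polynomial control on the sampled symbol, which plugs directly into the machinery of the periodic-spline framework it builds on; your route instead leans on the fact that the embedding $\Sp'(\T) \subseteq \Sp'(\R)$ is topological, so that restrictions of continuous operators remain continuous—a fact the paper asserts in its preliminaries and also uses implicitly. Your treatment of the null space (explicit Fourier diagonalization, with $\widehat{L}_S[n] = (2\pi\mathrm{i}n)^{N_T}\widehat{L}_{\mathrm{inv},T}(2\pi n)$ vanishing only at $n=0$ when $N_T \geq 1$) is also more explicit than the paper's one-line reduction to the null space of $\Dop^{N_T}$, which tacitly uses that $\LTinv^{-1}$ maps constants to constants; both are valid, and yours makes the mechanism visible.
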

    
    \begin{proof}
    By definition, we have that \( \Lop = \Dop^{N_T} \Lop_{\mathrm{inv}} \) for some invertible operator \( \Lop_{\mathrm{inv}} \). Then, \( \Lop_{\mathrm{inv}} : \Sp'(\R) \rightarrow \Sp'(\R) \) is LSI, hence its frequency response \( \widehat{L}_{\mathrm{inv}} \in \mathcal{O}_M(\R) \) is a smooth function bounded by some polynomials~\cite{Schwartz1966distributions}. Since \( \Lop_{\mathrm{inv}} \) is invertible, the frequency response \( 1 / \widehat{L}_{\mathrm{inv}} \) of its inverse is also a smooth function bounded by some polynomial. Hence, there exist constants \( A, B > 0 \) and \( p > 0 \) such that
        \begin{equation} \label{eq:somebounds}
            \frac{A}{1+|\omega|^p} \leq \widehat{L}_{\mathrm{inv}}(\omega) \leq B (1+|\omega|^p)
        \end{equation}
    for any \( \omega \in \mathbb{R} \). \\
    
    Since \( \Lop \) is a convolution, it commutes with shift operations. Hence, for any \( f \in \Sp'(\T) \), we have that \( \Lop \{f\} (\cdot - 1) = \Lop \{ f (\cdot - 1)\} = \Lop \{f\} \), which is therefore \( 1 \)-periodic. This shows that \( \Lop : \Sp'(\T) \rightarrow \Sp'(\T) \) is a periodic operator. The same holds for \( \Lop_{\mathrm{inv}} \). 
    
    The frequency sequence of \( \Lop_{\mathrm{inv}} \) (seen as a periodic operator) is connected to its frequency response (as an operator from \( \Sp'(\R) \) to itself) via the relation \( \widehat{L}_{\mathrm{inv}}[n] = \widehat{L}_{\mathrm{inv}}(2\pi\omega) \). Hence, due to \eqref{eq:somebounds}, we have that
        \begin{equation}
            \frac{A'}{1+|n|^p} \leq \widehat{L}_{\mathrm{inv}}[n] \leq B' (1+|n|^p)
        \end{equation}
        for some constants \( A', B' > 0 \) and \( p > 0 \). This implies that \( \Lop_{\mathrm{inv}} \) is invertible as a periodic operator~\cite{fageot2020tv}. 
        
        The equality of the trend-admissibility and the seasonal admissibility orders of \( \Lop \) is obvious. 
        
        The periodic null space of \( \Lop \) is the same as the one of \( \Dop^{N_T} \), which is \( \{0\} \) if \( N_T = 0 \) and \( \mathrm{Span} \{1\} \) otherwise. 
        
        Finally, the invertibility condition is obvious using that \( \L \) is invertible as a trend operator if and only if \( N_T = 0 \), and the latter condition also holds for \( \Lop \) as a seasonal operator. 
    \end{proof}
 
\paragraph{Periodic \( \LS \)-Splines.}
   We introduce periodic splines, whose concept slightly differs from the classical splines. In particular, as soon as the operator \( \LS \) has a null space, there is no function \( f \) such that \( \LS f = \mathrm{III} \)\footnote{For instance, for any \( f \in \Sp'(\T) \), we have that \( \langle \Dop f , 1 \rangle_{\Sp'(\T)\times \Sp(\T)} = 0 \neq \langle \mathrm{III} , 1 \rangle_{\Sp'(\T)\times \Sp(\T)} = 1 \), hence \( \Dop f \neq \mathrm{III} \). See~\cite[Section 2.2]{fageot2020tv} for more details.}. 
   We follow~\cite[Definition 4]{fageot2020tv} for \emph{periodic} Green's functions.  
   
    \begin{definition}
    \label{def:trendsplines}
    Let \( \LS \) be a seasonal-admissible operator with order \( N_S \geq 0 \). 
    \begin{itemize}
    
        \item When \( N_S = 0 \), we say that \( \rho_{\LS} \) is a \emph{periodic Green's function of \( \LS \)} if \( \LS \psi_{\LS} = \mathrm{III} \). 
        
        \item For \( N_S \geq 1 \), \( \rho_{\LS} \) is a periodic Green's function of \( \LS \) if \( \LS \psi_{\LS} = \mathrm{III} - 1 \). 
        
        \item For any \( N_S \geq 0 \), we say that \( f \) is a \emph{periodic \( \LS \)-spline} if 
    \begin{equation} \label{eq:periodicinnovation}
        \LS f = \sum_{k=1}^K a_k \mathrm{III}(\cdot - t_k)
    \end{equation}
    where \( K \geq 0 \), the \( a_k \in \mathbb{R} \backslash \{0\} \) are the \emph{weights}, and the  \( t_k \in \T \) are the distinct \emph{knots} of \( f \).
    \end{itemize}
    \end{definition}

 Assume that $\LS$ is invertible. Then, the generic form of a periodic $\LS$-spline is
\begin{equation}
    f = \sum_{k=1}^K a_k \rho_{\LS} (\cdot - t_k).
\end{equation}
For seasonal-admissible operators of order $N_S \geq 1$, we first observe that the Green's function itself $\rho_{\LS}$ is \textit{not} a periodic $\LS$-spline since $\LS \rho_{\LS} = \mathrm{III} - 1$ is not a Dirac stream as in \eqref{eq:periodicinnovation}. In that case, the generic form of a periodic $\LS$-spline is
\begin{equation}
    f = \sum_{k=1}^K a_k \rho_{\LS}(\cdot - t_k) + \alpha,
\end{equation}
where $\sum_{k=1}^K a_k = 0$ and $\alpha \in \R$~\cite[Proposition 3]{fageot2020tv}. The condition $\sum_{k=1}^K a_k = 0$ ensures that $f$ is indeed a periodic $\LS$-spline, since
\begin{equation}
    \Lop_S f = \sum_{k=1}^K a_k  \Lop_S \{\rho_{\LS}\}(\cdot - t_k) 
    = \sum_{k=1}^K a_k \mathrm{III} (\cdot - t_k) - \sum_{k=1}^K a_k = \sum_{k=1}^K a_k \mathrm{III} (\cdot - t_k).
\end{equation}

\paragraph{Seasonal Native Spaces.}
We fix a seasonal-admissible operator $\LS$ and define the space
\begin{equation}
   \MLS = \{ f \in \Sp'(\T), \quad \LS f \in \MS \}.
\end{equation}
Such spaces generalize the space of periodic Radon measures and have been studied in~\cite{fageot2020tv}. Seasonal native spaces are simpler to understand than their trend counterparts. We summarize the important facts specifying the Banach space structure of these spaces. More details can be found in~\cite[Section 3.2]{fageot2020tv}.

\begin{itemize}
    \item If $\LS$ is invertible, then $\MLS = \LS^{-1} \{ \MS \}$ easily inherits the Banach space structure of $\MR$ for the norm $f \mapsto \| f \|_{\MLS} = \| \LT f \|_{\MR}$.

    \item If $\LS = \Dop^{N_S} \Lop_{S,\mathrm{inv}}$ with $N_S \geq 1$, then $f \mapsto \| \LS f \|_{\MS}$ only specifies a semi-norm, and the null space of $\LS$ is the 1-dimensional space of constant functions. An element $f$ of $\MLS$ is then characterized by $(\LS f , \langle f , 1 \rangle_{\Sp'(\T) \times \Sp(\T)}) \in \MS \times \R$. Then, $\MLS$ is a Banach space for the norm
    \begin{equation}
        \| f \|_{\MLS} = \| \LS f \|_{\MS} + |\langle f , 1 \rangle_{\Sp'(\T) \times \Sp(\T)}|.
    \end{equation}

    \item We also consider the function space
    \begin{equation}
       \MLSz = \left\{ f \in \Sp'(\T), \quad \LS f \in \MS \quad \text{and} \quad \langle f , 1 \rangle_{\Sp'(\T) \times \Sp(\T)} = 0 \right\}
    \end{equation}
    of elements of $\MLS$ with zero mean. It is also a Banach space for the norm $\| \cdot \|_{\MLS}$.
\end{itemize}

The spaces $\MLS$ and $\MLSz$ are subspaces of $\Sp'(\T)$, which is itself embedded in $\Sp'(\R)$. We therefore identify a function $f \in \MLS$ with its corresponding periodic function defined over the real line $\R$.

\subsection{The Seasonal-Trend Native Space} \label{sec:STspace}

We fix a pair of trend- and seasonal-admissible operators $(\LT,\LS)$. We first consider the function space
\begin{equation} \label{eq:MLTplusMLS}
    \MLT + \MLS = \left\{ f = f_T + f_S, \quad (f_T,f_S) \in \MLT \times \MLS \right\}.
\end{equation}
The sum in~\eqref{eq:MLTplusMLS} is not always direct, as we clarify in the next proposition.

\begin{proposition} \label{prop:directsumdecomposition}
Let $\LT$ and $\LS$ be trend-admissible and seasonal-admissible operators, with respective orders $N_T, N_S \geq 0$.
\begin{itemize}
    \item Assume that $N_T = 0$. Then, we have the direct sum
    \begin{equation} \label{eq:directwithno0mean}
       \MLT + \MLS = \MLT \oplus \MLS.
    \end{equation}
    
    \item Assume that $N_T \geq 1$. Then, we have the direct sum
    \begin{equation} \label{eq:directwith0mean}
        \MLT + \MLS = \MLT \oplus \MLSz.
    \end{equation}
\end{itemize}
\end{proposition}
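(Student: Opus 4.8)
The plan is to reduce each statement to an intersection computation and then, in the $N_T \ge 1$ case, to a simple reallocation of the mean. To establish that a sum $A+B$ is direct it suffices to show $A \cap B = \{0\}$, so the heart of the matter is to identify $\MLT \cap \MLS$. First I would observe that any $f \in \MLT \cap \MLS$ is $1$-periodic, since $f \in \MLS \subseteq \Sp'(\T)$. Because $\LT$ is shift-invariant, $\LT f$ is again $1$-periodic (exactly as in the proof of the previous proposition, $\LT\{f\}(\cdot - 1) = \LT\{f(\cdot-1)\} = \LT\{f\}$), while $f \in \MLT$ forces $\LT f \in \MR$, i.e.\ $\LT f$ has finite total variation over all of $\R$.

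The key step — and the one I expect to carry the real content — is to note that a nonzero $1$-periodic Radon measure cannot have finite total variation on $\R$: its total variation over each translated fundamental domain $[n,n+1)$ equals its total variation on the torus, and summing over $n \in \Z$ diverges unless the measure vanishes. Hence $\LT f = 0$, so $f$ lies in the null space of $\LT$. With this in hand the two cases split cleanly. When $N_T = 0$ the operator $\LT$ is invertible, its null space is $\{0\}$, so $\MLT \cap \MLS = \{0\}$ and we obtain the direct sum~\eqref{eq:directwithno0mean}. When $N_T \ge 1$ the null space of $\LT$ on $\Sp'(\R)$ is the space $\mathcal{P}_{N_T}(\R)$ of polynomials of degree $<N_T$; intersecting with the $1$-periodic functions leaves only the constants, so $\MLT \cap \MLS = \Span\{1\}$.

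For the second claim I would then argue in two parts. Directness of $\MLT \oplus \MLSz$ follows since any $f \in \MLSz$ has vanishing mean whereas a nonzero constant does not, so $\MLT \cap \MLSz \subseteq \Span\{1\}$ contains no nonzero element. It remains to verify that enlarging $\MLSz$ back to $\MLS$ does not change the total space, i.e.\ $\MLT + \MLS = \MLT + \MLSz$. The inclusion $\supseteq$ is immediate from $\MLSz \subseteq \MLS$. For $\subseteq$, I would take $f = f_T + f_S$ with $f_T \in \MLT$ and $f_S \in \MLS$, split off the mean $\bar{f}_S = \langle f_S , 1 \rangle_{\Sp'(\T) \times \Sp(\T)}$, and rewrite $f = (f_T + \bar{f}_S) + (f_S - \bar{f}_S)$. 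Here $f_S - \bar{f}_S \in \MLSz$ by construction, and $\bar{f}_S$ is a constant, hence a degree-$0$ polynomial lying in $\MLT$ precisely because $N_T \ge 1$; thus $f_T + \bar{f}_S \in \MLT$. Combining the two parts yields~\eqref{eq:directwith0mean}.

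The only genuinely delicate point is the periodicity-versus-finite-mass argument, which is what collapses the intersection onto the null space of $\LT$; everything else is bookkeeping once the null space has been pinned down and the mean has been reallocated into the trend component.
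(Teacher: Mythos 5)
Your proof is correct, and its overall architecture coincides with the paper's: reduce directness to computing an intersection, show that any $f$ in the intersection satisfies $\LT f = 0$ because a $1$-periodic element of $\MR$ must vanish, conclude via the null space of $\LT$ (trivial if $N_T=0$, periodic polynomials hence constants if $N_T\geq 1$), and reallocate the mean of the seasonal part into the trend component to pass between $\MLS$ and $\MLSz$. The one genuine difference is how the key step --- ``periodic plus finite total variation on $\R$ implies zero'' --- is established. The paper argues distributionally: for $\varphi \in \Sp(\R)$ it pairs $g = \LT f$ with the partial periodizations $\varphi_n = \sum_{k=0}^n \varphi(\cdot - k)$, so that periodicity gives $\langle g , \varphi_n\rangle = (n+1)\langle g, \varphi\rangle$ while $|\langle g , \varphi_n\rangle|$ stays bounded in $n$ (the $\varphi_n$ are uniformly bounded since $\varphi$ decays rapidly), which forces $\langle g, \varphi\rangle = 0$ for every $\varphi$. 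You argue measure-theoretically: the total variation measure $|g|$ inherits the $1$-periodicity, so $\|g\|_{\MR} = \sum_{n\in\Z} |g|\bigl([n,n+1)\bigr)$ diverges unless $|g|\bigl([0,1)\bigr) = 0$. Your version is arguably more transparent, but it silently uses the fact that \emph{distributional} periodicity of $g$ implies translation-invariance of $g$ (and hence of $|g|$) \emph{as a measure}; this requires the uniqueness part of the Riesz--Markov representation together with the density of $\Sp(\R)$ in $\CR$, so that the identity $\langle g, \varphi(\cdot-1)\rangle = \langle g, \varphi\rangle$ extends from test functions to all of $\CR$. The paper's pairing argument works entirely at the level of the duality $\Sp'(\R) \times \Sp(\R)$ and never needs this identification; that is what it buys at the cost of the explicit $\varphi_n$ bookkeeping. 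The remaining bookkeeping in your write-up (the computation $\MLT \cap \MLS = \Span\{1\}$ when $N_T \geq 1$, the zero-mean argument killing constants in $\MLSz$, and the splitting $f = (f_T + \bar{f}_S) + (f_S - \bar{f}_S)$ using that constants lie in $\MLT$ exactly when $N_T \geq 1$) is correct and matches the paper's.
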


\begin{proof}
We prove the relation \eqref{eq:directwith0mean} (the proof of \eqref{eq:directwithno0mean} is similar and simpler). First of all, $\MLT + \MLS = \MLT + \MLSz$ since the constant function $1$ is in $\MLS$ as soon as $N_T \geq 1$, and $\MLS  =  \MLSz \oplus \Span\{1\}$. It then suffices to show that the sum between $\MLS$ and $\MLSz$ is direct.

Let $f \in \MLT \cap \MLSz$. The result follows if we prove that $f = 0$. First, $f \in \MLSz \subseteq \Sp'(\T)$ is a 1-periodic generalized function, hence $g = \LT f \in \Sp'(\T)$. Moreover, $g \in \MR$ by assumption. Hence, $g \in \MR \cap \Sp'(\T)$.

Let $\varphi \in \mathcal{S}(\R)$. For $n \geq 0$, we set $\varphi_n = \sum_{k=0}^n \varphi(\cdot - k)$. The periodicity of $g$ implies that
\begin{equation}
    \langle g , \varphi_n \rangle_{\Sp'(\R)\times\Sp(\R)} = \sum_{k=0}^n \langle g, \varphi(\cdot - k ) \rangle_{\Sp'(\R)\times\Sp(\R)} = (n+1) \langle g, \varphi \rangle_{\Sp'(\R)\times\Sp(\R)}.
\end{equation}
Then, using that $g \in \MR$ and $\varphi \in \CR$, we deduce that
\begin{equation} \label{eq:controlemoicamec}
    |\langle g, \varphi \rangle_{\Sp'(\R)\times\Sp(\R)} |
    = 
    \frac{|\langle g, \varphi_n \rangle_{\Sp'(\R)\times\Sp(\R)} | }{n+1} 
    = 
    \frac{|\langle g, \varphi_n \rangle_{\MR\times\CR} | }{n+1}
    \leq \frac{\lVert g \rVert_{\MR} \lVert \varphi \rVert_\infty}{n+1}.
\end{equation}
This implies that $| \langle g, \varphi \rangle_{\Sp'(\R)\times\Sp(\R)} | \leq  \lim_{n\rightarrow \infty}  \frac{\lVert g \rVert_{\MR} \lVert \varphi \rVert_\infty}{n+1} = 0$ for any $\varphi \in \Sp(\R)$, hence $g = 0$. The function $f$, which satisfies $\LT f = g = 0$, is therefore a polynomial of degree at most $(N_T-1)$. Since $f$ is also periodic, we deduce that $f$ is constant. Finally, $f \in \MLSz$ has mean 0, hence $f = 0$, as expected.
\end{proof}


Proposition~\ref{prop:directsumdecomposition} allows us to identify the native space 
of the optimization problem~\eqref{eq:firstoptiproblem} as follows.
 
\begin{definition} \label{def:STnative}
    Let $\LT$ and $\LS$ be trend- and seasonal-admissible operators, respectively. We define the \textit{seasonal-trend native space} associated to $(\LT,\LS)$ as
\begin{equation} \label{eq:STnative}
    \MLTLS = \MLT \times \MLS \text{ if } N_T=0, \quad \text{and } \MLTLS = \MLT \times \MLSz \text{ if } N_T\geq 1. 
\end{equation}
\end{definition}
 
The distinction in \eqref{eq:STnative} reflects the fact that the constant function, which is always in $\MLS$, is in $\MLS$ if and if $\LT \{1\} = 0$. In this case, the decomposition $f = f_S + f_T \in \MLT + \MLS$ is not unique since 
any decomposition $f = (f_S + \alpha) + (f_T - \alpha)$ would be valid for any $\alpha \in \R$. We impose that $f_S$ has $0$-mean to recover the uniqueness.
The native space $\MLTLS$ is then such that 
\begin{itemize}
\item the regularization $\mathcal{R}(f_T,f_S) = \| f_T \|_{\MR} + \| f_S \|_{\MS}$ is finite for $(f_T,f_S) \in \MLTLS$; and 
\item the components $(f_S, f_T)$ can be uniquely recovered from the sum $f = f_T + f_S \in \MLTLS$, with (i) $f_T \in \MLT$ and (ii) $f_S \in \MLS$ or $f_S \in \MLSz$ depending if $N_T=0$ or $N_T\geq 1$. 
\end{itemize}

The native space $\MLTLS$ inherits the Banach space structure of its component spaces $\MLT$ and $\MLS$ (or $\MLSz$).

\begin{proposition} \label{prop:banachnative}
Let $\LT$ and $\LS$ be trend-admissible and seasonal-admissible operators and $1 \leq p \leq \infty$.
The seasonal-trend native space $\MLTLS$ is a Banach space for the norm 
\begin{equation} \label{eq:normMLTLSpfinite}
    \| (f_T , f_S) \|_{\MLTLS,p} = \left\| ( \| f _T  \|_{\MLT} , \| f _S  \|_{\MLS} ) \right\|_p =  \left( \| f _T  \|_{\MLT}^p + \| f_S \|_{\MLS}^p \right)^{1/p} 
\end{equation}
if $p < \infty$ and
\begin{equation} \label{eq:normMLTLSpinfinite}
    \| (f_T , f_S) \|_{\MLTLS,\infty} = \left\| ( \| f _T  \|_{\MLT} , \| f _S  \|_{\MLS} ) \right\|_\infty = \max \left( \| f _T  \|_{\MLT} , \| f_S \|_{\MLS} \right) 
\end{equation}
if $p= \infty$. 
\end{proposition}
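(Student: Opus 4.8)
The plan is to observe that, by Definition~\ref{def:STnative}, the native space $\MLTLS$ is literally the Cartesian product of two Banach spaces: $\MLT$ in the first factor, and either $\MLS$ (when $N_T = 0$) or its closed subspace $\MLSz$ (when $N_T \geq 1$) in the second. Both factors are already known to be Banach spaces (for $\MLS$ and $\MLSz$ this is recalled in Section~\ref{sec:seasonoop}, and $\MLT$ is Banach by construction). Hence the proposition reduces to the standard functional-analytic fact that a finite product of Banach spaces, equipped with any $\ell_p$ combination of the component norms, is again a Banach space. Since all $\ell_p$ norms on $\R^2$ are equivalent, the induced topology on $\MLTLS$ is independent of $p$, so it suffices to carry out the argument once.

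First I would verify that the expression in~\eqref{eq:normMLTLSpfinite}--\eqref{eq:normMLTLSpinfinite} defines a genuine norm. It is the composition of the $\ell_p$ norm on $\R^2$ with the map $(f_T, f_S) \mapsto (\|f_T\|_{\MLT}, \|f_S\|_{\MLS})$. Positive homogeneity is immediate. Positive definiteness follows because the $\ell_p$ norm vanishes only at the origin of $\R^2$, which forces $\|f_T\|_{\MLT} = \|f_S\|_{\MLS} = 0$, and since these are norms on their respective spaces this gives $f_T = f_S = 0$. The triangle inequality follows by combining the triangle inequalities of the two component norms with the monotonicity and subadditivity of the $\ell_p$ norm on the nonnegative orthant: applying the component triangle inequalities coordinatewise and then monotonicity bounds $\|(\|f_T+g_T\|_{\MLT}, \|f_S+g_S\|_{\MLS})\|_p$ by $\|(\|f_T\|_{\MLT}+\|g_T\|_{\MLT}, \|f_S\|_{\MLS}+\|g_S\|_{\MLS})\|_p$, and a final application of the $\ell_p$ triangle inequality on $\R^2$ splits this into the sum of the two individual norms.

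For completeness, I would take a Cauchy sequence $(f_T^{(n)}, f_S^{(n)})_{n}$ in $\MLTLS$. Because the $\ell_p$ norms on $\R^2$ are equivalent, each coordinate of $(\|f_T^{(n)} - f_T^{(m)}\|_{\MLT}, \|f_S^{(n)} - f_S^{(m)}\|_{\MLS})$ is controlled by the product norm, so $(f_T^{(n)})_n$ is Cauchy in $\MLT$ and $(f_S^{(n)})_n$ is Cauchy in the relevant seasonal space. By completeness of each factor these sequences converge to limits $f_T \in \MLT$ and $f_S \in \MLS$ (respectively $\MLSz$); note that $\MLSz$ is closed in $\MLS$, so the limit stays inside it. The pair $(f_T, f_S)$ then lies in $\MLTLS$ and is the limit of the sequence in the product norm, again by the equivalence of $\ell_p$ norms on $\R^2$. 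This establishes completeness.

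I do not expect any genuine obstacle here: the statement is a packaging of the elementary fact that products of Banach spaces are Banach, and the only points requiring minor care are the case distinction on $N_T$ (handled by the already-established completeness of $\MLSz$ as a closed subspace of $\MLS$) and the observation that working with a single convenient value of $p$ suffices, which is guaranteed by the finite-dimensional equivalence of $\ell_p$ norms on $\R^2$.
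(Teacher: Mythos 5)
Your proof is correct and takes essentially the same approach as the paper: both identify $\MLTLS$ as the product of the Banach spaces $\MLT$ and $\MLS$ (or $\MLSz$ when $N_T \geq 1$) and invoke the standard fact that a product of Banach spaces equipped with an $\ell_p$ combination of the component norms is again Banach. The only difference is that the paper cites this fact from~\cite[Section 3]{unser2022convex} while you prove it in full (norm axioms plus componentwise completeness), which is a harmless elaboration.
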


\begin{proof}
If $\mathcal{X}$ and $\mathcal{Y}$ are two Banach spaces with respective norms $\|\cdot \|_{\mathcal{X}}$ and $\|\cdot \|_{\mathcal{Y}}$, then the product space $\mathcal{Z} = \mathcal{X}\times \mathcal{Y}$ is easily shown to be a Banach space for the norm $\| z \|_{\mathcal{Z},p} = \| (x,y) \|_{\mathcal{X}\times \mathcal{Y}, p} =  \left\| \left( \|x \|_{\mathcal{X}},  \|y\|_{\mathcal{Y}} \right) \right\|_p$
~\cite[Section 3]{unser2022convex}. 
The result then follows by applying this result to $(\mathcal{X}, \|\cdot \|_{\mathcal{X}}) = (\MLT, \| \cdot \|_{\MLT})$ and  $(\mathcal{Y}, \|\cdot \|_{\mathcal{Y}}) = (\MLS, \| \cdot \|_{\MLS})$ or $(\MLSz, \| \cdot \|_{\MLS})$.
\end{proof}
    
\section{Sensing Functionals}\label{sec:sensing}

In this section, we identify the class of linear functionals $\phi_\ell$ that can be used in the linear sensing process $\bm{\Phi}(f_T+f_S) = ( \phi_{\ell} (f_T+f_S) )_{1\leq \ell \leq L}$ of the optimization problem~\eqref{eq:STnative}. 

    \subsection{The Preduals of the Trend and Seasonal Native Spaces} \label{sec:preduality1}

    One our main goal in this article is to reveal the structure of the solutions of \eqref{eq:firstoptiproblem}. 
    This will be done in Section~\ref{sec:theRT} in the form of a representer theorem (see Theorem~\ref{theo:RTfull}). 
    The existing literature on representer theorems over Banach spaces revealed that  the \textit{predual} of the native Banach space plays a dominant role~\cite{Fisher1975,unser2017splines,fageot2020tv}. 
    The predual of $\MLTLS$ will be identified in Section~\ref{sec:preduality2} and is
    built on the spaces  $\CLT$ and $\CLS$, which are preduals of $\MLT$ and $\MLS$, respectively. 
    
    \paragraph{The space $\CLT$.}
    We fix a trend-admissible operator $\LT$. The space $\CLT$ is the predual of $\MLT$. It is called the \textit{trend measurement space} since it characterizes the possible function that can be used in the measurement process of the trend component. As we recalled on Section~\ref{sec:radon}, the space of continuous and vanishing functions $\CR$ is the predual of $\MR$. Hence, the spaces $\CLT$ generalize the space $\CR$. The adjoint $\LT^*$ of $\LT$ is the unique trend admissible operator such that 
    \begin{equation}
        \langle \LT \varphi_1 , \varphi_2 \rangle_{\Sp'(\R)\times\Sp(\R)} = \langle  \LT^* \varphi_2 ,  \varphi_1 \rangle_{\Sp'(\R)\times\Sp(\R)}
    \end{equation}
    for any test functions $\varphi_1, \varphi_2 \in \mathcal{S}(\R)$. For instance, $\Dop^* = -\Dop$. 
    
    \begin{itemize}
    
        \item  If $\LT$ is \textbf{invertible}, then $\CLT = \LT^{*} \{ \CR \}$  inherits the Banach space structure of $\CR$ for the norm 
        \begin{equation} \label{eq:posetanorme}
            f \mapsto \| f \|_{\CLT} = \| \LT^{-1*} f \|_{\infty}. 
        \end{equation}
        
        \item The trend measurement space of a non-invertible operator is harder to characterize and its precise structure will note be used thereafter. 
        We therefore refer the interested reader to the works~\cite{unser2017splines,unser2019native} where they are constructed and studied. 
        It suffices to know that $\CLT$ is a Banach space for a norm $\| \cdot \|_{\CLT}$ that generalizes~\eqref{eq:posetanorme}. 
        The key is to construct a dual space of the null space $\mathcal{P}_{N_T}(\R)$ of $\LT$ (see Section~\ref{sec:trendop}) such that $\CLT = \LT^* \CR \oplus (\mathcal{P}_{N_T}(\R))'$ and to specify the Banach norm using this direct-sum decomposition. A similar and simpler construction is given for the seasonal space $\CLS$ thereafter.  
        
    \end{itemize}
\paragraph{The spaces $\CLS$ and $\CLSz$.} 
We fix a seasonal-admissible operator $\LS$. 
As for the trend case, the space $\CLS$, called the \textit{seasonal measurement space}, is the predual of the space $\MLS$.
The periodic adjoint $\LS^*$ of $\LS$ is the unique periodic operator such that 
\begin{equation}
    \langle \LT \varphi_1 , \varphi_2 \rangle_{\Sp'(\T)\times\Sp(\T)} = \langle \LT^* \varphi_2 ,  \varphi_1  \rangle_{\Sp'(\T)\times\Sp(\T)}
\end{equation}
for any periodic test functions $\varphi_1, \varphi_2 \in \mathcal{S}(\T)$. The construction and analysis of seasonal measurement spaces is detailed in~\cite[Section 3.3]{fageot2020tv}, from which we recall the following facts. 
  
\begin{itemize}
    \item If $\LS$ is {invertible}, then $\CLS = \LS^{*} \{ \CT \}$ inherits the Banach space structure of $\CT$ for the norm 
    \begin{equation} \label{eq:posetanorme}
        f \mapsto \| f \|_{\CLS} = \| \LS^{-1*} f \|_{\infty}. 
    \end{equation}
    
    \item If $\LS$ is not invertible, its null space is made of constant functions. In this case, we have that $\CLS = \LS^* \CT \oplus \mathrm{Span}\{1\}$. The norm of $g \in \CLS$ is then given by
    \begin{equation}
        \| g \|_{\CLS} = \max \left(  \| h \|_{\infty} , |\alpha| \right) 
    \end{equation}
    where $h \in \CT$ is such that $\LS^* h = g$ and $\widehat{h}[0] = 0$ and where $\alpha = \widehat{g}[0] = \int_\T g(t) \mathrm{d} t \in \R$. Moreover, $(\CLS, \|\cdot \|_{\CLS} )$ is a Banach space~\cite[Theorem 2]{fageot2020tv}. 
\end{itemize} 

We moreover define the space of $0$-mean functions in $\CLS$ as
\begin{equation}
    \CLSz = \{ g \in \CLS, \quad \widehat{g}[0] = 0\}. 
\end{equation}
It is a Banach space for the norm $\|\cdot \|_{\CLT}$. 

\subsection{The Predual of the Native Space} \label{sec:preduality2}

We define the new function spaces that will be shown to be the preduals of the seasonal-trend native spaces introduced in Section~\ref{sec:STspace} in Proposition~\ref{prop:generalizedRieszMarkov}. 

    
\begin{definition}
    Let $\LT$ and $\LS$ be respectively trend-admissible and seasonal-admissible operators, with respective orders $N_T$ and $N_S$.     
    We define
\begin{equation} \label{eq:STmeasurement}
    \CLTLS = \CLT \times \CLS \text{ if } N_T=0, \quad \text{and } \CLTLS = \CLT \times \CLSz \text{ if } N_T\geq 1. 
\end{equation}
We call $\CLTLS$ the \emph{seasonal-trend measurement space} associated to the pair $(\LT,\LS)$. 
\end{definition}

We  fix $1 \leq q \leq \infty$. Then, we endow $\CLTLS$ with the norm
\begin{equation}
    \| (g_T , g_S) \|_{\CLTLS,q} = \left( \| g _T  \|_{\CLT}^q +  \| g_S \|_{\CLS}^q \right)^{1/q}
\end{equation}
if $q < \infty$, and
\begin{equation}
    \| (g_T , g_S) \|_{\CLTLS,\infty} = \max \left( \| g _T  \|_{\CLT} , \| g_S \|_{\CLS} \right)
\end{equation}
if $q = \infty$.

\begin{proposition}
\label{prop:generalizedRieszMarkov}
Let $1 \leq p , q \leq \infty$ be such that $\frac{1}{p}+\frac{1}{q} = 1$. The following facts hold.
\begin{itemize}
    \item     The space $(\CLTLS, \| \cdot \|_{\CLTLS,q} ) $ is a Banach space. 

    \item The  topological dual of $\CLTLS$ is the native space $(\CLTLS )' = \MLTLS$. 
    
    \item The dual norm associated to the norm $\| \cdot \|_{\CLTLS,q}$ is the norm $\| \cdot \|_{\MLTLS,p}$ defined in \eqref{eq:normMLTLSpfinite} and \eqref{eq:normMLTLSpinfinite}. This means that
     \begin{align}
    \| (f_T , f_S) \|_{\MLTLS,p} 
    &=  \sup_{\substack{(g_T, g_S) \in \CLTLS \\ \| (g_T , g_S) \|_{\CLTLS,q} = 1}}
    \langle (f_T, f_S) , (g_T, g_S) \rangle_{\MLTLS\times \CLTLS}  \nonumber \\
    &= 
    \sup_{\substack{(g_T, g_S) \in \CLTLS \\ \| (g_T , g_S) \|_{\CLTLS,q} = 1}} \langle f_T, g_T \rangle_{\MLT\times \CLT} + \langle f_S, g_S \rangle_{\MLS\times \CLS}.
\end{align}
\end{itemize}
\end{proposition}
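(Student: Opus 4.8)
The plan is to reduce all three assertions to the elementary duality theory of finite products of Banach spaces, combined with the predual identifications $(\CLT)' = \MLT$, $(\CLS)' = \MLS$, and $(\CLSz)' = \MLSz$ recalled in Section~\ref{sec:preduality1} (and \cite[Section 3.3]{fageot2020tv}). In each case $\CLTLS$ is a Cartesian product $\mathcal{A} \times \mathcal{B}$ of two Banach spaces, with $(\mathcal{A}, \mathcal{B}) = (\CLT, \CLS)$ when $N_T = 0$ and $(\mathcal{A}, \mathcal{B}) = (\CLT, \CLSz)$ when $N_T \geq 1$, and its norm is the $\ell_q$-aggregation of the two component norms.

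For the first claim I would invoke the same product-space argument used in Proposition~\ref{prop:banachnative} (see \cite[Section 3]{unser2022convex}): since $\mathcal{A}$ and $\mathcal{B}$ are complete and the $\ell_q$-combination of two norms is itself a norm by Minkowski's inequality, completeness is inherited coordinatewise, so $(\CLTLS, \|\cdot\|_{\CLTLS,q})$ is a Banach space. For the second and third claims simultaneously, the key is the general lemma that, for conjugate exponents $\frac{1}{p} + \frac{1}{q} = 1$, the dual of $\mathcal{A} \times \mathcal{B}$ endowed with the $\ell_q$-norm is isometrically isomorphic to $\mathcal{A}' \times \mathcal{B}'$ endowed with the $\ell_p$-norm, under the pairing $\langle (a,b),(a',b')\rangle = \langle a, a'\rangle + \langle b, b'\rangle$. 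Applying this with the identifications above yields $(\CLTLS)' = \MLT \times \MLS = \MLTLS$ (resp. $\MLT \times \MLSz$), and the dual norm is exactly $\|\cdot\|_{\MLTLS,p}$, which is the content of the final displayed equation.

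To prove the lemma, I would start from an arbitrary $\Lambda \in (\CLTLS)'$ and define its coordinate functionals $a'(a) = \Lambda(a,0)$ and $b'(b) = \Lambda(0,b)$. Since the inclusions $a \mapsto (a,0)$ and $b \mapsto (0,b)$ are isometric into $\CLTLS$, the restrictions $a'$ and $b'$ are continuous, hence $a' \in \mathcal{A}' = \MLT$ and $b' \in \mathcal{B}'$ equal to $\MLS$ or $\MLSz$; linearity then gives $\Lambda(a,b) = \langle a, a'\rangle + \langle b, b'\rangle$. The only step requiring genuine analysis, and thus the main obstacle, is the isometry $\|\Lambda\|_{(\CLTLS)'} = \|(a',b')\|_p$: the upper bound follows from Hölder's inequality applied to the pair $(\|a\|_{\mathcal{A}}, \|b\|_{\mathcal{B}})$ against $(\|a'\|_{\mathcal{A}'}, \|b'\|_{\mathcal{B}'})$, while the matching lower bound requires selecting near-optimal test elements $(a,b)$ that saturate Hölder, with attention to the endpoint cases $q \in \{1,\infty\}$ where the defining supremum is approached rather than attained. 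Once this isometry is established, the three assertions follow at once.
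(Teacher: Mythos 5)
Your proposal is correct and follows essentially the same route as the paper: both reduce the statement to the duality of a product $\mathcal{A} \times \mathcal{B}$ of Banach spaces with conjugate $\ell_q$/$\ell_p$ norms, combined with the predual identifications $(\CLT)' = \MLT$, $(\CLS)' = \MLS$, and $(\CLSz)' = \MLSz$. The only difference is that the paper cites the product-duality lemma directly from \cite[Lemma 1]{unser2022convex}, whereas you supply its standard proof (coordinate functionals, H\"older upper bound, near-optimal test elements for the lower bound), which is a sound and self-contained substitute.
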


\begin{proof}
We use~\cite[Lemma 1]{unser2022convex}, whose first statement directly implies the following result.
Let $\mathcal{X}$ and $\mathcal{Y}$ be two Banach spaces with respective norms $\|\cdot \|_{\mathcal{X}}$ and $\|\cdot \|_{\mathcal{Y}}$. Their topological dual and their dual norms are denoted by $(\mathcal{X}',\| \cdot \|_{\mathcal{X}'})$ and $(\mathcal{Y}',\| \cdot \|_{\mathcal{Y}'})$.
Then, the product space $\mathcal{Z} = \mathcal{X}\times \mathcal{Y}$ is a Banach space for the norm 
\begin{equation} \label{eq:Zp}
    \| z \|_{\mathcal{Z}} = \| (x,y) \|_{\mathcal{X}\times \mathcal{Y},p} =  \left\| \left( \|x \|_{\mathcal{X}} , \|y\|_{\mathcal{Y}} \right) \right\|_p.
\end{equation}
Its topological dual is the space $\mathcal{Z}' = \mathcal{X}' \times \mathcal{Y}'$, whose dual norm is given by 
\begin{equation} \label{eq:Zprimeq}
    \| z' \|_{\mathcal{Z}'} = \| (x',y') \|_{\mathcal{X}'\times \mathcal{Y}',q} = \left\| \left( \|x' \|_{\mathcal{X}'} , \|y'\|_{\mathcal{Y}'} \right) \right\|_q.
\end{equation}
We apply this result to the spaces $\mathcal{X} = \CLT$ and $\mathcal{Y} = \CLS$ if $N_T=0$ or $\CLSz$ if $N_T \geq 1$. We know moreover from existing works that $\CLT' = \MLT$~\cite[Theorem 6]{unser2017splines}, $\CLS' = \MLS$, and $\CLSz' = \MLSz$~\cite[Theorem 3]{fageot2020tv}. This implies the desired result. 
\end{proof}

Proposition~\ref{prop:generalizedRieszMarkov} shows that $(\CLTLS,\MLTLS)$ is a dual pair. 
It generalizes known results for the pairs $(\CLT,\MLT)$ and $(\CLS,\MLS)$. 
They are themselves generalizations of the Riesz-Markov theorem, which states the duality of the pairs  $(\CR,\MR)$ and $(\CT,\MS)$~\cite{gray1984shaping}.  Proposition~\ref{prop:generalizedRieszMarkov} is therefore a seasonal-trend generalized Riesz Markov theorem.

\subsection{Periodization and Seasonal-Trend Sensing Functionals} \label{sec:thetruesensing}

In the previous section, we identified the predual $\CLTLS$ of the seasonal-trend native space $\MLTLS$. In particular, any $(g_T,g_S) \in \CLTLS $ acts linearly and weak*-continuously on the native space $\MLTLS$ via
\begin{equation}
    (g_T,g_S) \quad : \quad (f_T,f_S) \mapsto \langle f_T, g_T \rangle_{\MLT\times \CLT} + \langle f_S, g_S \rangle_{\MLS\times \CLS}.
\end{equation}
This is not directly applicable to the optimization problem \eqref{eq:firstoptiproblem}, where the action is on the sum $f_T+f_S$. 
We shall therefore identify a class of sensing functionals $\phi \in \Sp'(\R)$ such that $(\phi,\Per\{\phi\}) \in \CLT \times \CLS$ can be used as an element of $\CLTLS$.

\begin{definition}
Let $f \in \Sp'(\R)$. For $N \geq 0$, we set $f_N = \sum_{|n|\leq N} f(\cdot - n)$. If the sequence $(f_N)_{N\geq 0}$ converges in $\Sp'(\R)$, then its limit, denoted by
\begin{equation}
    \Per\{f\} = \sum_{n\in \Z} f(\cdot - n),
\end{equation}
exists in $\Sp'(\R)$. In this case, $\Per\{f\}$ is in $\Sp'(\T)$ and we call it the \emph{periodization of $f$}. A generalized function is \emph{periodizable} if its periodization exists.      
\end{definition}

Any test function $\varphi$ in $\Sp(\R)$ is periodizable, and the sum $\varphi_N = \sum_{|n|\leq N} \varphi(\cdot - n)$ even converges in $\Sp(\T)$.
We provide sufficient conditions to ensure that a given generalized function is periodizable\footnote{Some generalized functions are not periodizable. For instance, the constant $f=1$ is such that $f_N = 2N+1$, hence $(f_N)_{N\geq 1}$ does not converge in $\Sp'(\R)$.}.

\begin{proposition} \label{prop:periodizable}
The following statements hold.
\begin{itemize}
    \item If $f \in \mathcal{O}_C'(\R)$, then $f$ is periodizable with $\Per \{ f \} \in \Sp'(\T)$. Moreover, any $g \in \Sp'(\T)$ can be written as the periodization $g = \Per \{f\}$ of some $f \in \mathcal{O}_C'(\R)$. 
    \item Assume that $f \in L_1(\R)$. Then, $f$ is periodizable and $\Per \{f\} \in L_1(\T)$.
    \item If $f : \R \rightarrow \R$ is continuous and such that
\begin{equation} \label{eq:conditionfforperiod}
    \left| f(t) \right| \leq \frac{C}{1+|t|^p}
\end{equation}
for some $C> 0$, $p>1$, and any $t \in \R$, then $f$ is periodizable and $\Per\{f\} \in \mathcal{C}(\T)$.
\end{itemize}
\end{proposition}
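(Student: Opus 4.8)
The plan is to treat the three bullets separately, since each rests on a different mechanism, but in every case via the same two-step pattern: first establish absolute or uniform convergence of the partial sums $f_N = \sum_{|n|\le N} f(\cdot - n)$ at the natural (pointwise, $L_1$, or uniform) level, and then upgrade this to convergence in $\Sp'(\R)$ by testing against $\varphi \in \Sp(\R)$ and applying dominated convergence.

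\textbf{First statement.} The key fact I would invoke is the convolutor characterization of $\mathcal{O}_C'(\R)$: for every $\psi \in \Sp(\R)$ one has $f * \psi \in \Sp(\R)$. Writing $a_n = \langle f(\cdot - n), \varphi\rangle_{\Sp'(\R)\times\Sp(\R)} = \langle f, \varphi(\cdot + n)\rangle_{\Sp'(\R)\times\Sp(\R)}$ and setting $\tilde\varphi(t) = \varphi(-t)$, a change of variables identifies $a_n = (f * \tilde\varphi)(-n)$. Since $f * \tilde\varphi \in \Sp(\R)$ is rapidly decreasing, the sequence $(a_n)_{n\in\Z}$ decays faster than any polynomial, so $\langle f_N, \varphi\rangle = \sum_{|n|\le N} a_n$ converges absolutely for every $\varphi$. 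Convergence of the pairings for all test functions, together with the Fréchet structure of $\Sp(\R)$, shows that $f_N$ converges in $\Sp'(\R)$; the limit is invariant under integer shifts by reindexing the sum, hence lies in $\Sp'(\T)$. For the surjectivity claim, I would choose a compactly supported smooth $\eta$ with $\Per\{\eta\} = 1$ — obtained by normalizing a nonnegative bump $\chi$ whose periodization is strictly positive as $\eta = \chi / \Per\{\chi\}$, the denominator being smooth, periodic, and strictly positive — and set $f = g\,\eta$ for $g \in \Sp'(\T)$. This $f$ is a compactly supported generalized function, hence in $\mathcal{O}_C'(\R)$, and the $1$-periodicity of $g$ gives $\Per\{g\eta\} = g\,\Per\{\eta\} = g$.

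\textbf{Second and third statements.} For $f \in L_1(\R)$, Tonelli's theorem yields $\int_0^1 \sum_{n\in\Z} |f(t-n)|\,\drm t = \|f\|_{L_1(\R)} < \infty$, so $\Per\{|f|\}$ is finite a.e. and $\sum_n f(\cdot - n)$ converges absolutely a.e. to an element of $L_1(\T)$ with $\|\Per\{f\}\|_{L_1(\T)} \le \|f\|_{L_1(\R)}$. To match this with the $\Sp'(\R)$-limit, test against $\varphi \in \Sp(\R)$: the Schwartz decay bound $\sum_m \sup_{[m,m+1]}|\varphi| < \infty$ together with periodicity of $\Per\{|f|\}$ makes $\int_\R |\varphi|\,\Per\{|f|\}$ finite, so Fubini applies and $\langle f_N, \varphi\rangle \to \int_\R \Per\{f\}\,\varphi$. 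Under $|f(t)| \le C/(1+|t|^p)$ with $p>1$, for $t \in [0,1]$ and $|n|\ge 2$ one has $|t-n| \ge |n|/2$, whence $|f(t-n)| \le 2^p C/|n|^p$; since $p>1$ the Weierstrass $M$-test gives uniform convergence of $\sum_n f(\cdot-n)$ on a period, so $\Per\{f\}$ is continuous and $1$-periodic. The partial sums are dominated by the bounded continuous function $G = \sum_n C/(1+|\cdot-n|^p)$, so dominated convergence again upgrades uniform convergence to convergence in $\Sp'(\R)$.

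The main obstacle is the first statement: the naive attempt to pass to the limit by noting that $\sum_{|n|\le N}\varphi(\cdot+n)$ converges in $\Sp(\T)$ fails, because this limit is periodic and therefore not in $\Sp(\R)$, so $f$ cannot simply be paired with it. The rapid decrease encoded in $f \in \mathcal{O}_C'(\R)$ — made operational through the convolutor property $f*\tilde\varphi \in \Sp(\R)$ — is precisely what compensates for the non-decay of the periodized test function and forces $\sum_n a_n$ to converge. Everything else is routine convergence bookkeeping.
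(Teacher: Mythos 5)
Your proof is correct. On the second and third bullets it follows essentially the same route as the paper: an absolute-convergence estimate over one period for the $L_1$ case (the paper phrases it as a Cauchy argument in $L_1([0,1])$, you phrase it via Tonelli, which also gives the norm bound $\|\Per\{f\}\|_{L_1(\T)} \le \|f\|_{L_1(\R)}$ for free), and the comparison bound $|t-n|\ge |n|/2$ plus the Weierstrass test for the continuous case. One place where you are actually more careful than the paper: the definition of periodizable requires convergence of $f_N$ in $\Sp'(\R)$, and you explicitly upgrade the $L_1(\T)$ resp. locally uniform convergence to weak* convergence by Fubini/dominated convergence against $\varphi \in \Sp(\R)$, whereas the paper's proof stops at the convergence on a period and leaves that last step implicit. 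The genuine divergence is in the first bullet: the paper disposes of the periodizability of $f \in \mathcal{O}_C'(\R)$ by citing \cite{fischer2015duality}, whereas you prove it from scratch via the convolutor characterization $f * \tilde\varphi \in \Sp(\R)$, the identification $a_n = \langle f(\cdot - n),\varphi\rangle = (f*\tilde\varphi)(-n)$, rapid decay of these samples, and weak*-sequential completeness of $\Sp'(\R)$ (Banach--Steinhaus for the Fréchet space $\Sp(\R)$). This buys self-containedness, and correctly diagnoses why the naive argument (pairing $f$ with the $\Sp(\T)$-limit of $\sum_{|n|\le N}\varphi(\cdot+n)$) fails, at the cost of invoking two structural facts that the paper's citation packages away. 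For surjectivity both you and the paper use a smooth partition of unity $\Per\{\eta\}=1$ and set $f = g\eta$; your choice of a \emph{compactly supported} bump normalized by its strictly positive periodization has the small advantage that $f=g\eta$ is a compactly supported distribution, hence manifestly in $\mathcal{O}_C'(\R)$, whereas the paper takes $\eta \in \Sp(\R)$ and leaves the membership $g\eta \in \mathcal{O}_C'(\R)$ unjustified.
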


\begin{proof}
The first part of the first point is a classic property of the (not so classic) space $\mathcal{O}'_C(\R)$ and is shown in~\cite[Section 4.2]{fischer2015duality}.
The second part follows from the existence of smooth partitions of unity: there exists functions $\varphi \geq 0$ in $\mathcal{S}(\R)$ such that $1 = \sum_{n \in \Z} \varphi(\cdot - n)$. Said differently, $\varphi$ is periodizable and $\Per \{\varphi\} = 1$. 

We fix $g \in \Sp'(\T)$. We observe that $f_N = \sum_{|n|\leq N} f(\cdot - n) = g .\sum_{|n|\leq N} \varphi (\cdot - n) = g. \varphi_N$, which converges towards $g . \Per \{\varphi\} = g$ in $\Sp'(\T)$. Hence, $g = \Per \{f\}$ is the periodization of some function $f \in \mathcal{O}'_C(\R)$. \\

Let $f \in L_1 (\R)$. Then, $(f_N)_{N\geq 1}$ is a Cauchy sequence in $L_1([0,1])$. Indeed, we have that, for any $1 \leq M \leq N$, 
\begin{align}
\int_0^1 |f_N (t) - f_M(t)|\mathrm{d}t
&= 
\int_0^1 \left| \sum_{M+1 \leq |n| \leq N} f(t - n) \right| \mathrm{d} t
\leq 
\int_{0}^1 \left( \sum_{M+1 \leq |n| \leq N}   \left|  f(t - n) \right| \right) \mathrm{d} t \nonumber \\
&\leq
\int_{M \leq |t| \leq {N+1}} |f(t)|\mathrm{d}t
\end{align}
which can be made arbitrarily small for large $N,M$ as $f$ is integrable over $\R$.
The space $L_1([0,1])$ is complete, hence the sequence $(f_N)_{N\geq 1}$ converges to some function $f_\infty = \sum_{n\in\Z} f(t-n) \in L_1([0,1])$, which can be seen as a periodic function over the real line $\R$. \\

For the last part of Proposition~\ref{prop:periodizable}, we observe that 
$|f_N(t)|  \leq \sum_{|n|\leq N} |f(t-n)| \leq \sum_{|n|\leq N} \frac{C}{1+|t-n|^p}$, which is absolutely convergent. Hence, the limit $\Per\{f\} (t) = \sum_{n\in\Z}  f(t-n) $ exists for any $t \in \R$ and is easily shown to be $1$-periodic. Then, 
\begin{equation}
    |f_N (t) - \Per\{f\}(t) | \leq  \sum_{|n - 1| \geq  N} |f(t-n)| \leq
     \sum_{|n - 1| \geq  N}  \frac{C}{1+|t-n|^p},
\end{equation}
the latter converging  uniformly to $0$  over compact intervals. Hence, $f_N$ converges to $\Per \{f\}$ uniformly on compact intervals. This implies in particular that $\Per\{f\}$ is continuous as the uniform limit of continuous functions on any compact interval. Hence, $f$ is periodizable with continuous periodization.
\end{proof}

Proposition~\ref{prop:periodizable} is useful due to the importance of the periodization to interpret $\phi(f_T+f_S)$ for $(f_T,f_S) \in \MLTLS$. Other conditions than \eqref{eq:conditionfforperiod} lead to identical conclusions. As seen in the proof, it suffices that $ \sum_{|n - 1| \geq  N} |f(t-n)| $ converges uniformly to $0$ on compact intervals to imply that $\Per\{f\}$ exists and is continuous.

\begin{definition} \label{def:sensingadmissible}
Let $\LT$ and $\LS$ be trend-admissible and seasonal-admissible operators, with respective orders $N_T, N_S \geq 0$.
A generalized function $\phi \in \Sp'(\R)$ is \emph{seasonal-trend sensing admissible} for the pair $(\LT,\LS)$ (or $(\LT,\LS)$-sensing admissible) if $\phi$ is periodizable and $(\phi, \Per\{\phi\}) \in \CLT \times \CLS$.
Then, $\phi$ acts on the trend-seasonable native space $\MLTLS$ as
\begin{equation}
    \phi(f_T, f_S) = \langle f_T , \phi \rangle_{\MLT,\CLT} +  \langle f_S , \Per\{\phi\} \rangle_{\MLS,\CLS}
\end{equation}
for any $(f_T,f_S) \in \MLTLS$. 
\end{definition}

Due to the unique decomposition $f = f_T + f_S \in \MLTLS$, we use the slight abuse of notation $ \phi(f_T, f_S) = \phi(f_T + f_S)$. 
For smooth functions $f_T \in \Sp(\R)$, $f_S \in \Sp(\T)$ and test function $\phi \in \Sp(\R)$ (which is periodizable), we have by using the periodicity of $f_S$ that
\begin{align}
    \phi(f_T + f_S) &= \int_{\R} \phi(t) (f_T + f_S)(t) \, \mathrm{d} t  =  \int_{\R} \phi(t) f_T(t) \, \mathrm{d} t  + \int_{\R} \phi(t) f_S(t) \, \mathrm{d} t 
    \nonumber \\
    &=  \int_{\R} \phi(t) f_T(t) \, \mathrm{d} t  + \sum_{n \in \Z} \int_{n}^{n+1} \phi(t) f_S(t) \, \mathrm{d} t  
    =  \int_{\R} \phi(t) f_T(t) \, \mathrm{d} t  +  \int_{0}^1  \left( \sum_{n \in \Z}\phi(t-n)\right)  f_S(t) \, \mathrm{d} t 
    \nonumber \\ 
    &=  \int_{\R} \phi(t) f_T(t) \, \mathrm{d} t  + \int_{0}^1\Per\{\phi\}(t) f_S(t) \, \mathrm{d} t 
    \nonumber \\ 
    &= \langle f_T , \phi \rangle_{\MLT,\CLT} +  \langle f_S , \Per\{\phi\} \rangle_{\MLS,\CLS}. 
\end{align}
The goal of Definition~\ref{def:sensingadmissible} is then to generalize this principle to the largest possible space for which it is valid. 
We exemplify Definition~\ref{def:sensingadmissible} on different pairs of operators and different measurement functionals. 

\begin{itemize}
    \item  Set $(\LT, \LS) = (\mathrm{Id}, \mathrm{Id})$. Then, $\phi$ is seasonal-trend sensing admissible if and only if $\phi \in \mathcal{C}_0(\R)$, $\phi$ is periodizable, and $\Per\{\phi\} \in \mathcal{C}(\T)$. 
    
    It is worth noting that there exist functions $\phi \in \mathcal{C}_0(\R)$ that are not-periodizable. The continuous and vanishing function $\phi (t) = \frac{1}{1+|t|}$ is such that $\phi_N(t) = \sum_{|n|\leq N} \frac{1}{1+|t-n|} \rightarrow \infty$ when $N \rightarrow \infty$ at any time $t \in \R$ and is therefore not periodizable.
    
    There also  exist periodizable functions $\phi \in \mathcal{C}_0(\R)$ such that $\Per\{\varphi\}$ is not continuous. Consider the continuous function $\phi$, which connects piecewise-linearly the constraints $\phi(n) = \frac{1}{n}$, $\phi(n-\frac{1}{n}) = \phi(n+\frac{1}{n}) = 0$. Then, $\phi$
    is in $L_1(\R)$ --- since $\int_{n-1/n}^{n+1/n} \phi(t) \mathrm{d}t = \frac{1}{n^2}$, hence $\int_{\R} |\phi(t)|\mathrm{d}t = \sum_{n\geq 2} \frac{1}{n^2} < \infty$ --- and therefore periodizable in $L_1(\T)$ according to Proposition~\ref{prop:periodizable}.
    However, $\phi_N(0) = \sum_{2 \leq n \leq N} \frac{1}{n}$ diverges when $N\rightarrow \infty$, and $\Per\{\phi\}$ is unbounded and therefore discontinuous.
  
    \item Set $\LT = \LS = \Dop^2$ and $\phi = \delta$ such that $\Per\{\phi\} = \mathrm{III}$. It is known that $\delta \in \mathcal{C}_{\Dop^2}(\R)$~\cite[Theorem 1]{unser2019representer} and $\mathrm{III} \in \mathcal{C}_{\Dop^2}(\T)$~\cite[Proposition 9]{fageot2020tv}. Hence, we are in the conditions of Definition~\ref{def:sensingadmissible} and we have
    \begin{equation}
        \phi(f_T + f_S) = (f_T + f_S)(0) = \langle f_T , \delta \rangle_{\mathcal{M}_{\Dop^2}(\R),\mathcal{C}_{\Dop^2}(\R)} +  \langle f_S , \mathrm{III} \rangle_{\mathcal{M}_{\Dop^2}(\T),\mathcal{C}_{\Dop^2}(\T)}.
    \end{equation}
    The case of spatial sampling will be treated more extensively in Section~\ref{sec:sampling}. 
    
    \item Let $\phi \in \Sp(\R)$ be a partition of unity; \emph{i.e.}, $\phi \geq 0$ satisfies $\Per \{\phi\} = \sum_{n\in \Z} \phi(\cdot - n) = 1$. Then, for any pair $(\LT,\LS)$, we have that $\varphi \in \CLT$ and $\Per \{\varphi\} = 1 \in \CLS$. Hence, we are in the conditions of Definition~\ref{def:sensingadmissible} and we have
    \begin{equation}
        \phi(f_T + f_S) = \langle f_T , \phi \rangle_{\MLT,\CLT} +  \langle f_S ,  1 \rangle_{\MLS, \CLS} = \langle f_T , \phi \rangle_{\Sp'(\R),\Sp(\R)} + \widehat{f}_S[0],
    \end{equation}
    where $\widehat{f}_S[0] = \langle f_S , 1 \rangle_{\Sp'(\T),\Sp(\T)} $ is the mean of the periodic generalized function $f_S \in \Sp'(\Z)$. 
    
    \item Set $\LT = (\Dop + \mathrm{Id})$ and $\phi= \One_{[0,\tau)}$ for some integer $\tau\geq1$.
    We first observe that 
    $\Per\{\phi\} = \sum_{k \in \Z} \One_{[k,k+\tau)} = \tau$ is constant, hence in $\CLS$. Moreover, $\phi \in \mathcal{C}_{\Dop + \mathrm{Id}}(\R)$~\cite[Theorem 6]{unser2017splines} and $\langle f_T , \phi \rangle_{\mathcal{M}_{\Dop}(\R),\mathcal{C}_{\Dop}(\R)} = \int_{0}^\tau f_T(t) \mathrm{d} t$. Hence, $\phi$ is $(\Dop + \mathrm{Id},\LS)$-sensing admissible and we have
    \begin{equation}
        \phi(f_T + f_S) = \int_{0}^\tau f_T(t) \mathrm{d} t + \tau \widehat{f}_S[0].
    \end{equation}       
    
    \item Let $\LT = \Dop$ and $\phi = \One_{[0,1)}$. Then, as for the previous case, we easily see that $\phi$ is $(\Dop,\LS)$-sensing admissible. However, any $f_S$ such that $(f_T, f_S) \in \mathcal{M}(\Dop,\LS)$ has $0$ mean. Hence, we have that 
    \begin{equation}
        \phi(f_T + f_S) = \int_{0}^1 f_T(t) \mathrm{d} t.
    \end{equation}    
\end{itemize}

\subsection{Sampling Admissibility} \label{sec:sampling}

In view of the Representer Theorem (see Section~\ref{sec:theRT}), spatial sampling is an admissible sensing process for the seasonal-trend optimization problem~\eqref{eq:firstoptiproblem} if $\phi_\ell : (f_T,f_S) \mapsto f_T(t_\ell) + f_S(t_\ell)$ is sensing admissible for any position $t_\ell \in \R$. This leads to the formal definition. 

\begin{definition}
A pair $(\LT,\LS)$ of trend- and seasonal-admissible operators is said to be \emph{sampling admissible} if $\delta(\cdot- t_0)$ is $(\LT,\LS)$-sensing admissible for any $t_0 \in \R$. 
\end{definition}

The Dirac impulse $\delta$ is periodizable with $\Per\{\delta\} = \mathrm{III}$, hence the sampling admissibility has no periodizability issue. 
In Proposition~\ref{prop:examplessamplingadm}, we characterize sampling admissibility for invertible and derivative operators.

\begin{proposition} \label{prop:examplessamplingadm}
Let $\LT$ and $\LS$ be trend- and seasonal-admissible operators, respectively. Then, the following fact holds.
\begin{itemize}
    \item The pair $(\LT,\LS)$ is sampling-admissible if and only if 
    \begin{equation} \label{eq:equifirst}
        \delta \in \CLT \quad \text{and} \quad  \mathrm{III} \in \CLS. 
    \end{equation}
    \item If $\LT$ and $\LS$ are invertible with respective Green's and periodic Green's function $\psi_{\LT} = \LT^{-1}\delta $ and $\rho_{\LS} = \LS^{-1} \mathrm{III}$, then $(\LT,\LS)$ is sampling-admissible if and only if
    \begin{equation} \label{eq:newequi}
         \psi_{\LT} \in \mathcal{C}_0(\R) \quad \text{and} \quad \rho_{\LS} \in \mathcal{C}(\T). 
    \end{equation}
    \item If $\LT = (\mathrm{Id} - \Delta)^{\gamma_T / 2}$ and $\LS= (\mathrm{Id} - \Delta)^{\gamma_S / 2}$ are Sobolev operators with parameters $\gamma_T, \gamma_S \geq 0$, then $(\LT,\LS)$ is sensing admissible if and only if $\gamma_T > 1$ and $\gamma_S > 1$.
    \item Let $N_T,N_S \geq 1$. Then, $(\Dop^{N_T}, \Dop^{N_S})$ is sampling-admissible if and only if $N_T \geq 2$ and $N_S \geq 2$.
\end{itemize} 
\end{proposition}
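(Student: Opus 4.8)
The plan is to reduce every case to the two scalar membership conditions $\delta \in \CLT$ and $\mathrm{III} \in \CLS$ of the first bullet, and then to verify these conditions for invertible operators and for derivative operators separately.

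For the first bullet I would argue by shift-invariance. By Definition~\ref{def:sensingadmissible} and the identity $\Per\{\delta(\cdot - t_0)\} = \mathrm{III}(\cdot - t_0)$, the pair $(\LT,\LS)$ is sampling-admissible exactly when $(\delta(\cdot - t_0),\mathrm{III}(\cdot - t_0)) \in \CLT \times \CLS$ for every $t_0 \in \R$. The spaces $\CLT$ and $\CLS$ are invariant under shifts, being built from the shift-invariant spaces $\CR,\CT$, the shift-commuting adjoints $\LT^*,\LS^*$, and shift-invariant null-space complements, with shift-invariant norms. Hence $\delta(\cdot - t_0) \in \CLT$ for all $t_0$ if and only if $\delta \in \CLT$, and likewise $\mathrm{III}(\cdot - t_0) \in \CLS$ for all $t_0$ if and only if $\mathrm{III} \in \CLS$, which is the asserted equivalence.

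For the second bullet I would use that invertibility gives $\CLT = \LT^*\{\CR\}$, so $\delta \in \CLT$ if and only if $\LT^{-1*}\delta \in \CR$. Now $\LT^{-1*}\delta$ is the Green's function of the adjoint $\LT^*$, namely the reflection $\psi_{\LT}(-\,\cdot)$ of $\psi_{\LT}$; since $\CR$ is reflection-invariant this is equivalent to $\psi_{\LT} \in \mathcal{C}_0(\R)$. The same reasoning with $\CLS = \LS^*\{\CT\}$ gives $\mathrm{III} \in \CLS$ if and only if $\rho_{\LS} \in \mathcal{C}(\T)$, and substituting these equivalences into the first bullet yields the second. For the third bullet, the Sobolev operators are invertible, so I would apply the second bullet and inspect the spectra of the Green's functions: $\psi_{\LT}$ has Fourier transform $(1+\omega^2)^{-\gamma_T/2}$, integrable precisely when $\gamma_T > 1$ (whence $\psi_{\LT} \in \mathcal{C}_0(\R)$ by Riemann--Lebesgue, while for $\gamma_T \le 1$ the Bessel kernel develops a logarithmic or power singularity at the origin and leaves $\mathcal{C}_0(\R)$), and $\rho_{\LS}$ has Fourier coefficients $(1+4\pi^2 n^2)^{-\gamma_S/2}$, absolutely summable precisely when $\gamma_S > 1$ (giving a continuous periodic function, with the identical singularity obstruction for $\gamma_S \le 1$). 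This forces $\gamma_T > 1$ and $\gamma_S > 1$.

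The fourth bullet is the main obstacle, since the derivative operators are non-invertible and the clean predual description $\CLT = \LT^*\{\CR\}$ is unavailable. Here I would again invoke the first bullet and determine when point evaluation $\delta$ (resp. $\mathrm{III} = \delta_0$ on $\T$) is a continuous functional on the native space. For $N = 1$ the space $\mathcal{M}_{\Dop}(\R)$ contains discontinuous functions of bounded variation, such as the Heaviside step, so point evaluation is ill-posed and $\delta \notin \mathcal{C}_{\Dop}(\R)$, with the same obstruction on $\T$. For $N \ge 2$, any $f$ with $\Dop^N f$ a (periodic) Radon measure has $\Dop^{N-1} f$ of bounded variation, hence $f \in \mathcal{C}^{N-2}$ is continuous; the delicate part---controlling $|f(0)|$ by the native norm, including its null-space component---follows the structural analysis of the measurement spaces in the cited works and recovers the known cases $N=2$. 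This gives $N_T \ge 2$ and $N_S \ge 2$, completing the proof.
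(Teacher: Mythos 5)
Your reduction and your treatment of the first three bullets essentially match the paper's proof: shift-invariance of the spaces gives the first bullet, invertibility plus the Green's-function translation gives the second (your explicit handling of the adjoint via reflection invariance of $\mathcal{C}_0(\R)$ is, if anything, more careful than the paper, which silently identifies $\LT^{-1*}\delta$ with $\psi_{\LT}$), and the Sobolev case rests on the singularity of the Bessel kernel at the origin, which the paper proves by the splitting $(1+\omega^2)^{-\gamma_T/2} = |\omega|^{-\gamma_T} + G_{\gamma_T}(\omega)$ with $G_{\gamma_T}$ integrable, together with the logarithmic divergence of $K_0$ when $\gamma_T = 1$. Two caveats there: non-integrability of the frequency response alone never excludes membership in $\mathcal{C}_0(\R)$ (Riemann--Lebesgue is one-directional), so the kernel-singularity fact you invoke is doing all the work and must be substantiated; and on the periodic side, non-summability of the coefficients $(1+4\pi^2 n^2)^{-\gamma_S/2}$ for $\gamma_S \leq 1$ does not by itself imply discontinuity (there are continuous periodic functions with non-absolutely-convergent Fourier series) --- the paper settles this half by citation to the periodic-TV reference, not by a summability argument.

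The genuine gap is the necessity direction of the fourth bullet for $N = 1$. You write that $\mathcal{M}_{\Dop}(\R)$ contains discontinuous functions of bounded variation such as the Heaviside step $u = \One_{[0,\infty)}$, ``so point evaluation is ill-posed and $\delta \notin \mathcal{C}_{\Dop}(\R)$.'' That inference is a non sequitur as stated: $\delta \in \mathcal{C}_{\Dop}(\R)$ is a membership claim about a concretely constructed Banach subspace of $\Sp'(\R)$, and the mere presence of discontinuous elements in the native space does not refute it --- the abstract pairing $\langle u, \delta \rangle$ would simply assign \emph{some} number to $u$, and nothing forces that number to be a pointwise value of $u$, so no contradiction arises without an additional structural or continuity argument. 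The paper closes exactly this gap structurally: if $\delta$ belonged to $\mathcal{C}_{\Dop}(\R)$, the known decomposition of that space would yield $\delta = \Dop g + \varphi$ with $g \in \mathcal{C}_0(\R)$ and $\varphi \in \Sp(\R)$; since also $\delta = \Dop u$, this gives $\Dop(u - g) = \varphi$, hence $u - g$ is infinitely smooth, which is impossible because $u$ is discontinuous and $g$ is continuous. (Alternatively, your ``ill-posedness'' can be made precise by exhibiting two sequences of continuous elements of $\mathcal{M}_{\Dop}(\R)$ --- ramps rising on $[-1/n,0]$ and on $[0,1/n]$ --- that converge weak* to the same limit $u$ while their values at $0$ remain $1$ and $0$ respectively, contradicting the weak*-continuity that membership of $\delta$ in the predual would force.) The same repair is needed for the claim $\mathrm{III} \notin \mathcal{C}_{\Dop}(\T)$, which the paper obtains by citation. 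Your sufficiency direction for $N \geq 2$ defers to the cited structural analysis of the measurement spaces, exactly as the paper does, so that part is acceptable.
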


\begin{proof}
First of all, all the considered function spaces are shift-invariant, hence $\delta(\cdot - t_0)$ is sensing admissible if and only if $\delta$ is. We can therefore restrict to $t_0 = 0$. The Dirac impulse $\delta$ is periodizable with $\Per\{\delta\} = \mathrm{III}$, hence $(\LT,\LS)$ is sampling admissible if and only if $\delta \in \CLT$ and $\mathrm{III} \in \CLS$. \\

Let us assume that both $\LT$ and $\LS$ are invertible. 
Then, by the definition of the trend-measurement space, $\delta \in \CLT = \LT^{*} \mathcal{C}_0(\R)$ if and only if $\psi_{\LT} = \LT^{-1} \delta \in \mathcal{C}_0(\R)$. Similarly, $\mathrm{III} \in \CLS = \LS^{*} \mathcal{C}(\T)$ if and only if $\rho_{\LS} = \LS^{-1} \mathrm{III} \in \mathcal{C}(\T)$. This implies the equivalence between \eqref{eq:equifirst} and \eqref{eq:newequi}. \\

The operators $\LT = (\mathrm{Id} - \Delta)^{\gamma_T / 2}$ and $\LS = (\mathrm{Id} - \Delta)^{\gamma_S / 2}$ are invertible, hence the criterion \eqref{eq:newequi} applies. Then, the Green's function of $\LS$ is continuous if and only if $\gamma_S > 1$ according to \cite[Proposition 9]{fageot2020tv}.      
For the trend, we observe that $\psi_{\LT}$ is the inverse Fourier transform of $\omega \mapsto F_{\gamma_T}(\omega) = (1 + \omega^2)^{-\gamma_T/2}$. 

We first consider $0 < \gamma_T < 1$. Then, for any $\omega \in \mathbb{R}$,
\begin{equation}
    F_{\gamma_T}(\omega) = |\omega|^{-\gamma_T} + \left( (1 + \omega^2)^{-\gamma_T/2} - |\omega|^{-\gamma_T} \right) = |\omega|^{-\gamma_T} + G_{\gamma_T} (\omega)
\end{equation}
with $G_{\gamma_T} \in L_1(\R)$. Indeed, $G_{\gamma_T}$ behaves like $- |\omega|^{-\gamma_T}$ around the origin and like $-\frac{\gamma_T}{2} |\omega|^{-(\gamma_T + 2)}$ at infinity, both being integrable. The inverse Fourier transform of $G_{\gamma_T}$ is therefore continuous. The inverse Fourier transform of $|\omega|^{-\gamma_T}$ is known to be proportional to $x \mapsto |x|^{1 - \gamma_T}$~\cite[Table 1 p. 359]{gelfand1964generalized} and is therefore discontinuous (and unbounded) around the origin. Hence, $\psi_{\LT}$ is discontinuous as the sum of continuous and discontinuous functions.

For $\gamma_T = 1$, we use that the inverse Fourier transform of $F_{\gamma_T}$ is proportional to $K_0(|x|)$, where $K_0$ is the modified Bessel function of the second kind. The latter is known to diverge like $\log |x|$ around the origin, hence $\psi_{\LT}$ is also not continuous.

We have proved that, for $\LT = (\mathrm{Id} - \Delta)^{\gamma_T / 2}$ and $\LS = (\mathrm{Id} - \Delta)^{\gamma_S / 2}$, $\psi_{\LT}$ is continuous if and only if $\gamma_T > 1$ and $\rho_{\LT}$ is continuous if and only if $\gamma_S > 1$. This proves the third point of Proposition~\ref{prop:examplessamplingadm}. \\

For the last point, we rely on \eqref{eq:equifirst}. The condition $\mathrm{III} \in \mathcal{C}_{\mathrm{D}^{N_S}}(\T)$ is equivalent to $N_S \geq 2$ according to~\cite[Proposition 9]{fageot2020tv}.
The Dirac impulse $\delta$ is shown to be in $\mathcal{C}_{\Dop^2}(\R)$ in~\cite[Proposition 15]{unser2019representer}, and the reasoning can be extended to higher-order derivatives with $N_T \geq 2$.
On the contrary, $\delta$ is not in $\mathcal{C}_{\Dop}(\R)$. Otherwise, we would have that $\delta = \Dop g + \varphi$ with $g \in \mathcal{C}_0(\R)$ and $\varphi \in \Sp(\R)$~\cite{unser2017splines}. Since $\delta = \Dop u$ with $u(t) = \One_{[0,\infty)}(t)$ is the Heaviside function, this would imply that $\Dop( u - g ) = \varphi$ is infinitely smooth, hence $u - g$ is infinitely smooth which is not the case. This concludes the proof. 
\end{proof}

\section{Seasonal-Trend Representer Theorem}\label{sec:RT}

We aim at recovering a composite function $f_0 : \R \rightarrow \R$ given by $f_0 = f_{0,S} + f_{0,T}$ where $f_{0,S}$ is a $1$-periodic function --- the seasonal part of $f_0$ --- and $f_{0,T}$ is a non-periodic function --- the trend part. 
We have access to $L \geq 1$ noisy observations 
\begin{equation}
  \bm{y} \approx \bm{\Phi}(f_0) \in \R^L,
\end{equation}
where $\bm{\Phi}(f_0)$ depends linearly on $f_0$. 
The reconstruction model depends on the choice of trend- and seasonal-admissible operators $\LT$ and $\LS$, which determines the seasonal-trend native space $\MLTLS$ (see Definition \eqref{def:STnative}) for the reconstruction task. 

In this section, we obtain the representer theorem for the reconstruction problem~\eqref{eq:firstoptiproblem}.
We first obtain a seasonal-trend representer theorem for measures in Section~\ref{sec:RTmeasure} and then deduce the general case in Section~\ref{sec:theRT}. We deal with the special case of piecewise-polynomial reconstruction in Section~\ref{sec:derivatives} and we compare with quadratic regularizations for seasonal-trend reconstruction in Section~\ref{sec:hilbert}. 

\subsection{Seasonal-Trend Representer Theorem for Radon Measures} \label{sec:RTmeasure}

We first consider optimization problems such as~\eqref{eq:firstoptiproblem} with no regularization operators, \emph{i.e.} with $\LS = \LT = \mathrm{Id}$. According to Definition~\ref{def:STnative}, the seasonal-trend native space is  $\mathcal{M}(\mathrm{Id}, \mathrm{Id}) = \MR \times \MS$.
In preparation for the general representer theorem, we slightly modify the optimization problem by adding a finite-dimensional vector in the search space, which only affects the measurement process and is not concerned by the regularization.
Proposition~\ref{prop:RTmeasure} will be the cornerstone for the proof of the general case (see Theorem~\ref{theo:RTfull} in Section~\ref{sec:theRT}).

\begin{proposition}[Seasonal-Trend Representer Theorem for Radon Measures] \label{prop:RTmeasure}
We consider the following:
\begin{itemize}
\item $N\geq 0$ and $L\geq 1$ are some integers such that $N\leq L$;
\item $\mathcal{E} : \R^L \rightarrow \R^+ \cup \{\infty\}$ is a proper, lower semi-continuous, coercive, and convex data fidelity functional,
\item $\bm{\Phi} = (\phi_\ell)_{1 \leq \ell \leq L} = \left( (g_{T, \ell}, g_{S,\ell}, \bm{z}_\ell) \right)_{1 \leq \ell \leq L} \in \left( \mathcal{C}_0(\R) \times \mathcal{C}(\T) \times \R^N \right)^L$ is a family such that (i) $\bm{\Phi} ( 0, 0 , \bm{c} ) = \bm{0}$ implies that $\bm{c} = \bm{0}$ and (ii) $\mathcal{E}\circ \bm{\Phi}$ is proper;
\item $\lambda_T, \lambda_S > 0$.
\end{itemize}
Then, the solution set 
\begin{equation} \label{eq:Woptipbmeasure}
   \mathcal{W} = \underset{(w_T,w_S,\bm{c}) \in \MR \times \MS \times \R^N}{\arg\min} \quad \mathcal{E} ( \bm{\Phi}(w_T,w_S,\bm{c}) ) + \lambda_T \| w_T \|_{\MR} + \lambda_S  \| w_S \|_{\MS} 
\end{equation}
is non-empty, convex, and weak*-compact in  $\MR \times \MS \times \R^N$. Moreover, its extreme points $(\tilde{w}_T,\tilde{w_S}, \bm{c})$ are such that
\begin{equation}
    \label{eq:extremepointsmeasures}
    \tilde{w}_T = \sum_{k=1}^{K_T} a_k \delta(\cdot - u_k) \quad \text{and} \quad   \tilde{w}_S = \sum_{k=1}^{K_S} b_k \mathrm{III}(\cdot - v_k)
\end{equation}
where $a_k \neq 0$, the $u_k \in \R$ are distinct, $b_k \neq 0$, the $v_k \in \T$ are distinct, and $K_S + K_T \leq L - N$. 
\end{proposition}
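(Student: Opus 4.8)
The plan is to treat the two assertions separately: the topological properties of $\mathcal{W}$ by the direct method in the weak* topology, and the extreme-point structure by combining an abstract discreteness result with a finite-dimensional counting argument.

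For the first assertion, convexity of $\mathcal{W}$ is immediate since the objective $J(w_T,w_S,\bm{c}) = \mathcal{E}(\bm{\Phi}(w_T,w_S,\bm{c})) + \lambda_T\|w_T\|_{\MR} + \lambda_S\|w_S\|_{\MS}$ is convex. For non-emptiness and weak*-compactness I would take a minimizing sequence $(w_T^{(n)},w_S^{(n)},\bm{c}^{(n)})$. As $\mathcal{E}\geq 0$ and $\lambda_T,\lambda_S>0$, the regularization terms bound $\|w_T^{(n)}\|_{\MR}$ and $\|w_S^{(n)}\|_{\MS}$; coercivity of $\mathcal{E}$ then bounds $\bm{\Phi}(w_T^{(n)},w_S^{(n)},\bm{c}^{(n)})$. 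Writing $\bm{\Phi}(w_T,w_S,\bm{c}) = \bm{\Phi}(w_T,w_S,\bm{0}) + Z\bm{c}$ with $Z$ the $L\times N$ matrix built from the $\bm{z}_\ell$, the term $\bm{\Phi}(w_T^{(n)},w_S^{(n)},\bm{0})$ is bounded because $g_{T,\ell},g_{S,\ell}$ lie in the preduals, so $Z\bm{c}^{(n)}$ is bounded; hypothesis (i) makes $Z$ injective, hence bounded below on $\R^N$, so $\bm{c}^{(n)}$ is bounded too. Banach--Alaoglu (the preduals $\CR$, $\CT$ being separable) yields a weak*-convergent subsequence $w_T^{(n)}\weakconv w_T^\star$, $w_S^{(n)}\weakconv w_S^\star$, $\bm{c}^{(n)}\to\bm{c}^\star$; weak* lower semicontinuity of the total-variation norms, weak* continuity of $\bm{\Phi}$, and lower semicontinuity of $\mathcal{E}$ give that the limit is a minimizer. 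The same bounds make $\mathcal{W}$ norm-bounded, and lower semicontinuity of $J$ makes it weak*-closed, whence weak*-compact.

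For the second assertion, Krein--Milman guarantees extreme points exist; let $(\tilde{w}_T,\tilde{w}_S,\tilde{\bm{c}})$ be one and set $\bm{z}^\star = \bm{\Phi}(\tilde{w}_T,\tilde{w}_S,\tilde{\bm{c}})$. The crucial structural input is that each measure component is \emph{discrete}: since the extreme points of the unit balls of $\MR$ and $\MS$ are exactly $\{\pm\delta(\cdot - u)\}$ and $\{\pm\mathrm{III}(\cdot - v)\}$, the abstract extreme-point theory for total-variation minimization under finitely many linear measurements — in the product-space form of~\cite{unser2022convex}, in the lineage of~\cite{Fisher1975,boyer2019representer,bredies2020sparsity} — forces $\tilde{w}_T = \sum_{k=1}^{K_T} a_k\delta(\cdot - u_k)$ and $\tilde{w}_S = \sum_{k=1}^{K_S} b_k\mathrm{III}(\cdot - v_k)$ with $a_k,b_k\neq 0$ and distinct knots. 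I expect this discreteness step to be the main obstacle: establishing it rigorously in the infinite-dimensional setting, rather than the finite counting below, is the delicate point. I would either invoke the cited results directly on $\mathcal{W}$, or first pass to the face $\mathcal{C} = \{(w_T,w_S,\bm{c}) : \bm{\Phi}(w_T,w_S,\bm{c}) = \bm{z}^\star,\ \|w_T\|_{\MR}\leq\|\tilde{w}_T\|_{\MR},\ \|w_S\|_{\MS}\leq\|\tilde{w}_S\|_{\MS}\}$, which one checks is contained in $\mathcal{W}$ and shares the extreme point, reducing matters to extreme points of a ball intersected with finitely many affine constraints.

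Finally I would establish $K_T + K_S \leq L - N$ by dimension counting. Given the discrete form, consider perturbations $(h_T,h_S,\bm{d})$ with $h_T = \sum_k\alpha_k\delta(\cdot - u_k)$ supported on the knots of $\tilde{w}_T$, $h_S = \sum_k\beta_k\mathrm{III}(\cdot - v_k)$ on those of $\tilde{w}_S$, and $\bm{d}\in\R^N$, subject to $\bm{\Phi}(h_T,h_S,\bm{d}) = \bm{0}$. This homogeneous system has $L$ equations in $K_T + K_S + N$ unknowns, hence a nonzero solution whenever $K_T + K_S + N > L$. For $\epsilon$ small the signs of the weights are preserved, so $\tfrac{1}{2}(\|\tilde{w}_T + \epsilon h_T\|_{\MR} + \|\tilde{w}_T - \epsilon h_T\|_{\MR}) = \|\tilde{w}_T\|_{\MR}$ and similarly for the seasonal part; since the measurement stays at $\bm{z}^\star$, the average of the objectives of $(\tilde{w}_T\pm\epsilon h_T,\tilde{w}_S\pm\epsilon h_S,\tilde{\bm{c}}\pm\epsilon\bm{d})$ equals the minimum, forcing both perturbed points to be minimizers. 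Hypothesis (i) rules out $(h_T,h_S)=(0,0)$ with $\bm{d}\neq\bm{0}$, so any nonzero solution genuinely moves the measure part; the two perturbed points are then distinct elements of $\mathcal{W}$ averaging to $(\tilde{w}_T,\tilde{w}_S,\tilde{\bm{c}})$, contradicting extremality. Hence $K_T + K_S + N \leq L$, and $N\leq L$ ensures the bound is nonvacuous.
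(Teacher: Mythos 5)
Your proposal is correct. It follows the same overall skeleton as the paper's proof but is substantially more self-contained, so a comparison is useful. For existence, convexity, and weak*-compactness the paper does not run the direct method explicitly: it verifies that the cost functional is proper, convex, weak*-lower semi-continuous, and coercive on the dual pair $\MR\times\MS\times\R^N = (\CR\times\CT\times\R^N)'$, and then invokes \cite[Proposition~8]{gupta2018continuous} to conclude. The underlying mechanism is nevertheless identical to yours: the paper's coercivity argument is the contrapositive of your boundedness argument, resting on the same three ingredients (the TV terms control the measures, the measure-part measurements are controlled by duality against the preduals, and hypothesis (i) makes $\bm{c}\mapsto\bm{\Phi}(0,0,\bm{c})$ bounded below, so that coercivity of $\mathcal{E}$ controls $\bm{c}$). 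Where you genuinely depart from the paper is in the second half: the paper's written proof ends at weak*-compactness, delegating both the discrete form of the extreme points and the bound $K_T+K_S\leq L-N$ to the cited result, whereas you import only the discreteness step from the abstract literature (which is on par with the paper's own level of delegation) and prove the cardinality bound by hand. Your perturbation argument is correct: when $K_T+K_S+N>L$, a nonzero kernel vector of the homogeneous $L\times(K_T+K_S+N)$ system supported on the atoms and on $\R^N$ yields, for small $\epsilon$, two distinct points of $\mathcal{W}$ with unchanged measurements and sign-preserved weights whose midpoint is the extreme point, which is impossible; hence $K_T+K_S+N\leq L$. One minor remark: your final appeal to hypothesis (i) is superfluous at that stage, since a kernel vector of the form $(0,0,\bm{d})$ with $\bm{d}\neq\bm{0}$ would already produce two distinct minimizers averaging to the extreme point, so the contradiction goes through in every case. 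In short, your route buys an explicit, checkable proof of the counting bound and makes transparent where $N\leq L$ and hypothesis (i) enter; the paper's route buys brevity by outsourcing everything after coercivity to \cite{gupta2018continuous}.
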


\begin{proof}
Let $\mathcal{B} = \mathcal{C}_0(\R) \times \mathcal{C}(\T) \times \R^N$, which is a Banach space for the norm
\begin{equation}
    (g_T,g_S, \bm{z}) \mapsto \max \left( \|g_T \|_{\infty},  \|g_S \|_\infty ,  \|\bm{z}\|_2 \right). 
\end{equation}
Its topological dual is $\mathcal{B}' = \MR \times \MS \times \R^N$, whose dual norm is 
\begin{equation} 
    (w_T,w_S, \bm{c}) \mapsto   \|w_T \|_{\MR} + \|w_S \|_{\MS}+ \|\bm{c}\|_2 .
\end{equation}
This can be proved using \cite[Lemma 1]{unser2022convex}, as we did in Proposition~\ref{prop:generalizedRieszMarkov} below (see \eqref{eq:Zp} and \eqref{eq:Zprimeq}).

We define the cost functional $\mathcal{J}$ over $\mathcal{B}'$ as 
\begin{equation}
    \mathcal{J}( w_T,w_S, \bm{c}) = \mathcal{E} ( \bm{\Phi}(w_T,w_S,\bm{c}) ) + \lambda_T  \| w_T \|_{\MR} +  \lambda_S \| w_S \|_{\MS} .
\end{equation}
The assumptions made on $\mathcal{E}$ and $\bm{\Phi}$ and the properties of the total-variation norms imply that both $( w_T,w_S, \bm{c}) \mapsto \mathcal{E} ( \bm{\Phi}(w_T,w_S,\bm{c}) )$ and $( w_T,w_S, \bm{c}) \mapsto \| w_T \|_{\MR} +  \| w_S \|_{\MS}$ are proper, convex, and weak*-lower semi-continuous, and their sum $\mathcal{J}$ also shares these properties. 

We now show that $\mathcal{J}$ is coercive. We fix a sequence $( ( w_{T,n},w_{S,n}, \bm{c}_n ) )_{n\geq 0} \in (\MR \times \MS \times \R^N )^\mathbb{N} $ such that 
\begin{equation}\label{eq:interm1}
    \|w_{T,n} \|_{\MR} + \|w_{S,n} \|_{\MS}+ \|\bm{c}_n\|_2 \rightarrow_{n} \infty
\end{equation} 
and show that $J_n = \mathcal{J}( w_{T,n},w_{S,n}, \bm{c}_n ) \rightarrow_n \infty$. By contradiction, let assume this is not the case. This means that a subsequence of $(J_n)_{n\geq 0}$ is bounded. By only considering this subsequence, we can restrict to the case where $(J_n)_{n\geq 0}$ itself is bounded, say by $M>0$. This implies that $\|w_{T,n} \|_{\MR} + \|w_{S,n} \|_{\MS} \leq J_n \leq M$ and therefore $\|\bm{c}_n\|_2 \rightarrow_{n} \infty$ due to \eqref{eq:interm1}.

By assumption, $\bm{c} \mapsto \bm{\Phi}(0,0,\bm{c})$ is injective, which implies that \begin{equation}
    \label{eq:interm2} \|\bm{\Phi}(0, 0, \bm{c}_n)\|_2 \rightarrow_{n} \infty. 
\end{equation} 
Moreover, we have that $\bm{\Phi} : \MR \times \MS \times \R^N \rightarrow \R^L$ is weak*-continuous. The weak*-topology being weaker than the norm topology, this implies that $\bm{\Phi}$ is also continuous for the Banach topology on $\MR \times \MS \times \R^N$, hence there exists some constant $A>0$ such that $\|\bm{\Phi}(w_T,w_S,\bm{0} ) \|_2 \leq A ( \|w_T\|_{\MR} +  \|w_{S} \|_{\MS})$. This implies in particular that $\|\bm{\Phi}(w_{T,n},w_{S,n},\bm{0} ) \|_2  \leq A M$ is bounded. 
Finally, we observe that, using the triangular inequality for $\|\cdot \|_2$ and the linearity of $\bm{\Phi}$, 
\begin{align}
    \|\bm{\Phi}( w_{T,n},w_{S,n}, \bm{c}_n ) \|_2 \geq   \|\bm{\Phi}( 0,0 , \bm{c}_n ) \|_2 -  \|\bm{\Phi}( w_{T,n},w_{S,n}, \bm{0} ) \|_2 \geq  \|\bm{\Phi}( 0,0 , \bm{c}_n ) \|_2 - A M \rightarrow_n \infty. 
\end{align}
This shows that $\|\bm{\Phi}( w_{T,n},w_{S,n}, \bm{c}_n ) \|_2 \rightarrow_n \infty$. The coercivity of $\mathcal{E}$ then implies that $M \geq J_n \geq \mathcal{E}(\|\bm{\Phi}( w_{T,n},w_{S,n}, \bm{c}_n ) \|_2) \rightarrow_n \infty$,  which is impossible. This demonstrates the coercivity of $\mathcal{J}$. 

We are in the conditions of~\cite[Proposition 8]{gupta2018continuous}, and we deduce that $\mathcal{W} = \arg\min_{( w_T,w_S, \bm{c})} \mathcal{J} ( w_T,w_S, \bm{c})$ is non-empty, convex, and weak*-compact.
\end{proof}

\subsection{Seasonal-Trend Representer Theorem for Spline Reconstruction} \label{sec:theRT}

\begin{theorem}[Seasonal-Trend Representer Theorem]
\label{theo:RTfull}
    Consider the optimization problem 
    \begin{equation}
    \label{eq:mainopti} 
        \underset{(f_T,f_S) \in \MLTLS}{\inf} \quad \mathcal{E}(\bm{\Phi}(f_T, f_S))  + \lambda_T \lVert \LT f_T \rVert_{\MR}  + \lambda_S \lVert \LS f_S \rVert_{\MS}, 
    \end{equation}  
    where 
\begin{itemize}
    \item $\LT$ is trend-admissible with order $N_T$, $\LS$ is seasonal-admissible with order $N_S$;
    \item $\mathcal{E} : \R^L \rightarrow \R^+ \cup \{\infty\}$ is a proper, lower semi-continuous, coercive, and convex data fidelity functional;
    \item $\bm{\Phi} = (\phi_1,\ldots , \phi_L)$ is a family of  $(\LT,\LS)$-sensing admissible functionals (see Definition~\ref{def:sensingadmissible}) such that (i) $\bm{\Phi}(p) = \bm{0}$ implies that $p = 0$ for any polynomial $p$ such that $\LT p = 0$ and (ii) $\mathcal{E} \circ \bm{\Phi}$ is proper; and 
    \item $\lambda_T, \lambda_S > 0$.
\end{itemize}    
    Then, this problem admits at least one solution, and its non-empty solution set 
    \begin{equation}
    \label{eq:mainoptisolutionset} 
        \mathcal{V} = \underset{(f_T,f_S) \in \MLTLS}{\arg\min} \quad \mathcal{E}(\bm{\Phi}(f_T,f_S))  + \lambda_T 
        \lVert \LT f_T \rVert_{\MR} + \lambda_S \lVert \LS f_S \rVert_{\MS}  
    \end{equation}    
    is convex and weak*-compact in $\MLTLS$. Moreover, any extreme point solution $(\tilde{f}_S, \tilde{f}_T)$ is made of $\LT$- and periodic $\LS$-spline functions such that
    \begin{align} \label{eq:formofsolution1}
           \LT \tilde{f}_T  &= \sum_{k=1}^{K_T} a_k \delta(\cdot - u_k)    \quad \text{ and } \quad
     \LS \tilde{f}_S  = \sum_{k=1}^{K_S} b_k \mathrm{III} (\cdot - v_k) ,
    \end{align}    
    where $a_k \neq 0$,  the $u_k \in \R$ are distinct, $b_k \neq 0$,  and the $v_k \in \T$ are distinct. Moreover, we have that $\sum_{k=1}^{K_S} b_k = 0$ as soon as $(N_T,N_S) \neq (0,0)$.
    Finally, the total number of knots is bounded by
    \begin{equation} \label{eq:boundKSKT}
        K_T + K_S \leq 
\begin{cases} 
  L  &  \text{ if } N_T= N_S = 0, \\
  L + 1 - N_T &  \text{ otherwise}. 
\end{cases}
    \end{equation}
\end{theorem}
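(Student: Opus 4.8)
The plan is to reduce Theorem~\ref{theo:RTfull} to its measure-valued counterpart, Proposition~\ref{prop:RTmeasure}, by using the regularization operators themselves as a change of variables. Writing $\LT=\Dop^{N_T}\LTinv$ and $\LS=\Dop^{N_S}\LSinv$, I would pass from $(f_T,f_S)$ to the innovations $w_T=\LT f_T\in\MR$ and $w_S=\LS f_S\in\MS$, together with a finite-dimensional vector $\mathbf{c}$ recording the coordinates of the null-space part of $f_T$ in a fixed basis $(p_1,\dots,p_{N_T})$ of $\mathcal{P}_{N_T}(\R)$ (and, when $N_T=0$ and $N_S\geq1$, the seasonal mean $\langle f_S,1\rangle$, the unique unregularized scalar surviving in $\MLS$). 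The native-space descriptions of Section~\ref{sec:native} show that $(f_T,f_S)\mapsto(w_T,w_S,\mathbf{c})$ is a linear bijection onto the appropriate product, that it is bicontinuous for the weak* topologies, and hence that it preserves convexity, weak*-compactness, and extreme points. It therefore suffices to treat the transported problem, and the existence/convexity/compactness assertions follow immediately from those in Proposition~\ref{prop:RTmeasure}.

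Second, I would verify that the transported objective has exactly the structure required by Proposition~\ref{prop:RTmeasure}. The regularizer transports isometrically, $\lambda_T\lVert\LT f_T\rVert_{\MR}+\lambda_S\lVert\LS f_S\rVert_{\MS}=\lambda_T\lVert w_T\rVert_{\MR}+\lambda_S\lVert w_S\rVert_{\MS}$, and $\mathbf{c}$ stays unregularized. For the data term I would expand each sensing functional through the predual direct sums of Section~\ref{sec:preduality1}: since $\phi_\ell\in\CLT$ one has $\phi_\ell=\LT^*g_{T,\ell}+q_{T,\ell}$ with $g_{T,\ell}\in\CR$ and $q_{T,\ell}$ a functional on $\mathcal{P}_{N_T}(\R)$, while $\Per\{\phi_\ell\}\in\CLS$ gives $\Per\{\phi_\ell\}=\LS^*g_{S,\ell}+\alpha_\ell\cdot1$ with $g_{S,\ell}\in\CT$. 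Substituting and using $\langle f_T,\LT^*g_{T,\ell}\rangle=\langle w_T,g_{T,\ell}\rangle$ (and its seasonal analogue) rewrites $\phi_\ell(f_T,f_S)$ as $\langle w_T,g_{T,\ell}\rangle_{\MR,\CR}+\langle w_S,g_{S,\ell}\rangle_{\MS,\CT}+\langle\mathbf{c},\bm{z}_\ell\rangle$, which is precisely the sensing form $(g_{T,\ell},g_{S,\ell},\bm{z}_\ell)\in\CR\times\CT\times\R^N$ of Proposition~\ref{prop:RTmeasure}; the identifiability hypothesis $\bm\Phi(0,0,\mathbf{c})=\bm0\Rightarrow\mathbf{c}=\bm0$ then reduces to the theorem's assumption~(i).

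Third, to obtain the constraint $\sum_k b_k=0$ and the exact knot count I would encode the structural fact that $w_S\in\MSz$ whenever $(N_T,N_S)\neq(0,0)$ (forced by the factor $\Dop^{N_S}$ and the zero-mean convention on $\MLSz$) as one extra hard measurement: replace $\mathcal{E}$ by $\widehat{\mathcal{E}}(\bm z,z_{L+1})=\mathcal{E}(\bm z)+\iota_{\{0\}}(z_{L+1})$ and append the functional $w_S\mapsto\langle w_S,1\rangle$ to $\bm\Phi$. One checks that $\widehat{\mathcal{E}}$ remains proper, convex, lower semi-continuous, and coercive (coercivity survives because the indicator is $+\infty$ off the feasible hyperplane while $\mathcal{E}$ is coercive on it), so Proposition~\ref{prop:RTmeasure} applies with $L+1$ measurements and an $N_T$-dimensional unregularized block. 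Its extreme points then take the form $w_T=\sum_k a_k\delta(\cdot-u_k)$ and $w_S=\sum_k b_k\mathrm{III}(\cdot-v_k)$ with $K_T+K_S\leq(L+1)-N_T$, and $w_S\in\MSz$ yields $\sum_k b_k=0$; when $N_T=N_S=0$ no extra measurement is added, $w_S$ ranges over all of $\MS$, and one recovers $K_T+K_S\leq L$ without the sum condition. Transporting $(w_T,w_S,\mathbf{c})$ back through the change of variables returns the advertised $\LT$- and periodic $\LS$-spline forms of~\eqref{eq:formofsolution1}.

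The main obstacle will be the exact bookkeeping of the null-space and constant directions across the four sign patterns of $(N_T,N_S)$, together with the claim that the change of variables is a genuine weak*-homeomorphism so that extreme points transfer in both directions. Concretely, I would need Proposition~\ref{prop:directsumdecomposition} and the seasonal-native-space structure to show that $(f_T,f_S)\mapsto(w_T,w_S,\mathbf{c})$ is well defined and bijective in each case, and to confirm that the seasonal mean is a bona fide unregularized scalar precisely when $N_T=0$ and $N_S\geq1$—the one situation where the uniform count $L+1-N_T$ and the role of the zero-mean measurement must be reconciled with particular care. A second, more routine, point is checking that the augmented functional $\widehat{\mathcal{E}}$ and the appended measurement meet every hypothesis of Proposition~\ref{prop:RTmeasure}, and that assumption~(i) indeed supplies the injectivity of the finite-dimensional block.
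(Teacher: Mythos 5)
Your strategy is the same as the paper's: transport the problem to the measure domain via $(w_T,w_S,\bm{c})=(\LT f_T,\LS f_S,\text{null-space coordinates})$ and invoke Proposition~\ref{prop:RTmeasure}; in fact your write-up is more complete than the paper's own proof, which carries out only the invertible case $N_T=N_S=0$ via this change of variables and essentially asserts the conclusion in the two non-invertible cases. Your device of enforcing $\langle w_S,1\rangle=0$ through the augmented fidelity $\widehat{\mathcal{E}}(\bm z,z_{L+1})=\mathcal{E}(\bm z)+\iota_{\{0\}}(z_{L+1})$ also matches what the paper implicitly relies on (its proof of Theorem~\ref{theo:weak*convergence} invokes a modified fidelity $\tilde{\mathcal{E}}$ that "requires that the seasonal component has $0$-mean", whose defining equation is a dangling reference), and your counting reproduces the stated bounds: $(L+1)-N_T$ when $N_T\geq1$, and in fact $L$ rather than $L+1$ when $N_T=0$, $N_S\geq1$, which is stronger and harmless.

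The one genuine gap is precisely the point you flagged but left unresolved: when $N_T=0$ and $N_S\geq1$, the finite-dimensional block is the seasonal mean $\alpha=\langle f_S,1\rangle$, and Proposition~\ref{prop:RTmeasure} requires $\widehat{\bm{\Phi}}(0,0,\alpha)=\bm 0\Rightarrow\alpha=0$. Assumption (i) of the theorem is vacuous when $N_T=0$ (no nonzero polynomial satisfies $\LT p=0$), so it says nothing about $\alpha$; what you actually need is that at least one $\Per\{\phi_\ell\}$ has nonzero mean component in the decomposition $\CLS=\LS^*\CT\oplus\mathrm{Span}\{1\}$, and this is not among the hypotheses. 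This cannot be "reconciled" by bookkeeping: if every $\Per\{\phi_\ell\}$ had zero mean, the cost would be invariant under $f_S\mapsto f_S+\alpha$, so the solution set would contain an affine line, hence be neither weak*-compact nor possess any extreme point, and the reduction (indeed the theorem as stated) fails in that case. You should state this as a missing hypothesis for that sign pattern rather than as a detail to be checked — note that the paper's own sketch has the same blind spot. Separately, a smaller point to nail down: the null-space coordinates $\bm{c}$ must be chosen biorthogonally to the direct-sum decomposition $\CLT=\LT^*\CR\oplus(\mathcal{P}_{N_T}(\R))'$, so that the transported measurements genuinely take the separated form $(g_{T,\ell},g_{S,\ell},\bm z_\ell)$ with no cross terms between $w_T$ and $\bm{c}$; with that convention, the rest of your plan carries through.
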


We prove Theorem~\ref{theo:RTfull} below. This result is the first representer theorem for functional TV-based inverse problems applicable to seasonal-trend decomposition.
The reconstructed composite signal of Theorem~\ref{theo:RTfull} based on $(\tilde{f}_T, \tilde{f}_S) \in \mathcal{V}$ is then given by
\begin{equation}
    \tilde{f} (t) = \tilde{f}_T(t)  +  \tilde{f}_S (t). 
\end{equation}
We recall that $\psi_{\LT}$ is a Green's function of $\LT$ and $\rho_{\LS}$ is the periodic Green's function of $\LS$. The form of $\tilde{f}$ can be detailed by distinguishing several cases: 
\begin{itemize}
    \item If $N_T = N_S = 0$, then 
    \begin{equation} \label{eq:ftildeform1}
        \tilde{f} (t)=  \sum_{k=1}^{K_T} a_k \psi_{\LT} (t - u_k) +   \sum_{k=1}^{K_S} b_k \rho_{\LS} (t - v_k) 
    \end{equation}
    with $K_T + K_S \leq L$.
    \item If $N_T \geq 1$, then
    \begin{equation}\label{eq:ftildeform2}
        \tilde{f} (t)= \left( \sum_{k=1}^{K_T} a_k \psi_{\LT} (t - u_k) + \sum_{n=1}^{N_T} c_n t^{n-1} \right) + \sum_{k=1}^{K_S} b_k \rho_{\LS} (t - v_k)
    \end{equation}
    with $\bm{c}=(c_1, \ldots , c_{N_T}) \in \R^{N_T}$,  $\sum_{k=1}^{K_S} b_k = 0$, and $K_T + K_S \leq L + 1 - N_T$.
    \item If $N_T = 0$ and $N_S \geq 1$, then 
    \begin{equation} \label{eq:ftildeform3}
        \tilde{f} (t)=  \sum_{k=1}^{K_T} a_k \psi_{\LT} (t - u_k) + \left( \sum_{k=1}^{K_S} b_k \rho_{\LS} (t - v_k) + \alpha  \right)
    \end{equation}
    with $\alpha \in \R$, $\sum_{k=1}^{K_S} b_k = 0$, and  $K_T + K_S \leq L + 1$. 
\end{itemize}
The bound \eqref{eq:boundKSKT} is essentially $L+1 - N_T$ except when both $\LT$ and $\LS$ are invertible, in which case one can reduce the bound by $1$. 

\begin{proof}[Proof of Theorem~\ref{theo:RTfull}]
We distinguish between three scenarios, depending on the invertibility of $\LT$ and $\LS$, or equivalently on their orders $N_T$ and $N_S$. These scenarios correspond to \eqref{eq:ftildeform1} to \eqref{eq:ftildeform3}. We denote by $\phi_\ell = (g_{T,\ell}, g_{S,\ell}) \in \CLTLS$ the functionals involved in $\bm{\Phi}$. \\

\textbf{If $N_T = N_S = 0$.} This means that both $\LT$ and $\LS$ are invertible operators. 
We define the functional $\bm{\Psi} = (\psi_{\ell}(w_T,w_S))_{1\leq \ell \leq L } \in \CR \times \CT$ such that 
\begin{equation}
     \psi_{\ell}(w_T,w_S) =  \langle w_T, \LT^{-1*} g_{T,\ell} \rangle_{\MR,\CR}  +  \langle w_S, \LS^{-1*} g_{S,\ell} \rangle_{\MS,\CT}.  
\end{equation}
Then, \eqref{eq:mainoptisolutionset} is equivalent to the optimization problem
\begin{equation}
    \label{eq:newopti1} 
            \mathcal{W} = \underset{(w_S,w_T) \in \MR\times \MS}{\arg\min} \quad \mathcal{E}(\bm{\Psi}(w_T, w_S))  + \lambda_T  
          \lVert   w_T \rVert_{\MR}  + \lambda_S \lVert   w_S \rVert_{\MS}, 
\end{equation}
via the identification $(w_T,w_S) = (\LT f_T, \LS f_S)$. This means that $(\tilde{f}_T, \tilde{f}_S) \in \mathcal{V}$ if and only if $(\tilde{w}_T, \tilde{w}_S) = (\LT \tilde{f}_T, \LS \tilde{f}_S)$ is a solution of \eqref{eq:newopti1}. Since the solution set of the above problem is convex and the Green's function representations lead to splines, we conclude that $(\tilde{f}_T, \tilde{f}_S)$ must consist of splines of the form \eqref{eq:ftildeform1}.

\textbf{If $N_T \geq 1$.} In this case, $\LT$ is not invertible, so we need to consider the impact of the trend regularization term on the solution. The optimization problem becomes
\[
    \mathcal{W} = \underset{(w_S,w_T) \in \MR \times \MS}{\arg\min} \quad \mathcal{E}(\bm{\Psi}(w_T, w_S)) + \lambda_T \lVert \LT w_T \rVert_{\MR} + \lambda_S \lVert \LS w_S \rVert_{\MS}.
\]
Since the trend part $w_T$ is regularized by $\LT$ and is allowed to contain polynomial terms of degree up to $N_T - 1$, the solution takes the form
\[
    \tilde{f}(t) = \sum_{k=1}^{K_T} a_k \psi_{\LT}(t - u_k) + \sum_{n=1}^{N_T} c_n t^{n-1} + \sum_{k=1}^{K_S} b_k \rho_{\LS}(t - v_k),
\]
where the polynomial part corresponds to the trend component, and the seasonal component is modeled by periodic splines. The number of knots satisfies the bound $K_T + K_S \leq L + 1 - N_T$.

\textbf{If $N_T = 0$ and $N_S \geq 1$.} In this case, the seasonal component is non-trivial while the trend component is a constant. The optimization problem reduces to
\[
    \mathcal{W} = \underset{(w_S,w_T) \in \MR \times \MS}{\arg\min} \quad \mathcal{E}(\bm{\Psi}(w_T, w_S)) + \lambda_T \lVert \LT w_T \rVert_{\MR} + \lambda_S \lVert \LS w_S \rVert_{\MS},
\]
which leads to a solution of the form
\[
    \tilde{f}(t) = \sum_{k=1}^{K_T} a_k \psi_{\LT}(t - u_k) + \left( \sum_{k=1}^{K_S} b_k \rho_{\LS}(t - v_k) + \alpha \right),
\]
where $\alpha \in \R$ represents the constant trend, and the seasonal component is modeled by periodic splines with the constraint $\sum_{k=1}^{K_S} b_k = 0$. The number of knots satisfies the bound $K_T + K_S \leq L + 1$.

Thus, in all cases, the solution is represented as a sum of splines with a number of knots that satisfies the bounds given in \eqref{eq:boundKSKT}. The uniqueness of the solution follows from the convexity of the problem and the fact that the functional $\mathcal{E}$ is coercive and lower semi-continuous, ensuring the existence of a unique minimizer in the weak* topology. \qed
\end{proof}

\subsection{Seasonal-Trend Piecewise-Polynomial Reconstruction} \label{sec:derivatives}

We apply Theorem~\ref{theo:RTfull} for the special case of sampling measurements and derivative operators. The latter leads to piecewise polynomial seasonal-trend reconstruction. 

\begin{corollary} \label{coro:samplingderivative}
We consider the following:
\begin{itemize}
\item $L,N_T,N_S$ are integers such that $N_T, N_S \geq 2$ and $L \geq N_T$. 
\item $\mathcal{E} : \R^L \rightarrow \R^+ \cup \{\infty\}$ is a proper, lower semi-continuous, coercive, and convex data fidelity functional,
\item $\bm{x} = (x_1,\ldots , x_L) \in \R^L$ is a set of distinct sampling points such that $(f_T,f_S) \in \mathcal{M}(\Dop^{N_T}, \Dop^{N_S}) \mapsto \mathcal{E}( ((f_T+f_S) (x_\ell))_{1 \leq \ell \leq L})$ is proper. 
\item $\lambda_T,\lambda_S > 0$.
\end{itemize}
Then, the solution set 
\begin{equation}
   \mathcal{V} = \underset{(f_T,f_S) \in \mathcal{M}(\Dop^{N_T}, \Dop^{N_S}) }{\arg\min} \quad \mathcal{E}( ((f_T+f_S) (x_\ell))_{1 \leq \ell \leq L}) + \lambda_T  \| \Dop^{N_T} f_T \|_{\MR} +  \lambda_S \|\Dop^{N_S} f_S \|_{\MS} 
\end{equation}
is non-empty, convex, and weak*-compact in  $\mathcal{M}(\Dop^{N_T}, \Dop^{N_S})$. Moreover, its extreme points $(\tilde{f}_T,\tilde{f_S})$ are such that
\begin{equation}
    \label{eq:extremepointsmeasures}
    \Dop^{N_T} \tilde{f}_T = \sum_{k=1}^{K_T} a_k \delta(\cdot - u_k) \quad \text{and} \quad   \Dop^{N_S} \tilde{f}_S = \sum_{k=1}^{K_S} b_k \mathrm{III}(\cdot - v_k)
\end{equation}
where $a_k \neq 0$, the $u_k \in \R$ are distinct, $b_k \neq 0$ such that $\sum_{k=1}^{N_S} b_k = 0$, the $v_k \in \T$ are distinct, 
and 
\begin{equation} \label{eq:boundKSKTforderivative}
    K_S + K_T \leq L + 1- N_T .
\end{equation}
\end{corollary}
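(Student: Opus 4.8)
The plan is to obtain Corollary~\ref{coro:samplingderivative} as a direct specialization of the Seasonal-Trend Representer Theorem (Theorem~\ref{theo:RTfull}): the entire task reduces to checking that the derivative operators $(\Dop^{N_T},\Dop^{N_S})$ together with the sampling functionals $\phi_\ell = \delta(\cdot - x_\ell)$ satisfy every hypothesis of that theorem. Once this verification is complete, all claimed conclusions—nonemptiness, convexity, and weak*-compactness of $\mathcal{V}$, the innovation form of the extreme points, the mean-zero constraint on the seasonal weights, and the knot bound—are inherited verbatim.

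First I would record the admissibility of the operators and of the functionals. Writing $\Dop^{N_T} = \Dop^{N_T}\circ\mathrm{Id}$ with $\mathrm{Id}$ LSI, continuous, and invertible exhibits $\Dop^{N_T}$ as trend-admissible of order $N_T$, and the same decomposition in the periodic setting shows $\Dop^{N_S}$ is seasonal-admissible of order $N_S$ (Sections~\ref{sec:trendop} and~\ref{sec:seasonoop}). For the sensing functionals, the relevant statement is that the pair $(\Dop^{N_T},\Dop^{N_S})$ is sampling-admissible, i.e.\ that $\delta(\cdot - t_0)$ is $(\Dop^{N_T},\Dop^{N_S})$-sensing admissible for every $t_0$. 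By the last item of Proposition~\ref{prop:examplessamplingadm} this holds precisely when $N_T \geq 2$ and $N_S \geq 2$, which is assumed; concretely, $\delta(\cdot - x_\ell)$ is periodizable with $\Per\{\delta(\cdot - x_\ell)\} = \mathrm{III}(\cdot - x_\ell)$, and $(\delta(\cdot - x_\ell),\mathrm{III}(\cdot - x_\ell)) \in \mathcal{C}_{\Dop^{N_T}}(\R)\times\mathcal{C}_{\Dop^{N_S}}(\T)$, so each $\phi_\ell(f_T,f_S) = (f_T+f_S)(x_\ell)$ is well defined on $\mathcal{M}(\Dop^{N_T},\Dop^{N_S})$.

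The only genuinely new point is hypothesis (i) of Theorem~\ref{theo:RTfull}: if $p$ is a polynomial with $\Dop^{N_T} p = 0$ and $\bm{\Phi}(p)=\bm{0}$, then $p=0$. The null space of $\Dop^{N_T}$ is the space $\mathcal{P}_{N_T}(\R)$ of polynomials of degree at most $N_T-1$, and for such $p$ one has $\bm{\Phi}(p) = (p(x_1),\ldots,p(x_L))$. A nonzero polynomial of degree at most $N_T-1$ has at most $N_T-1$ distinct roots; since the $x_\ell$ are distinct and $L \geq N_T$, vanishing at all of them forces at least $N_T$ distinct roots and hence $p\equiv 0$. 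Hypothesis (ii), that $\mathcal{E}\circ\bm{\Phi}$ be proper, is exactly the standing assumption on the sampling points, and the remaining requirements on $\mathcal{E}$ are imposed directly.

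With every hypothesis in place I would invoke Theorem~\ref{theo:RTfull}. Because $N_T \geq 2 \geq 1$ we land in its case $N_T \geq 1$, which delivers the innovation form of~\eqref{eq:extremepointsmeasures} for the extreme points, the knot bound $K_T + K_S \leq L+1-N_T$ of~\eqref{eq:boundKSKTforderivative}, and—since $(N_T,N_S)\neq(0,0)$—the requirement that the seasonal weights $b_k$ sum to zero. I do not anticipate any serious obstacle: the proof is purely a matter of matching hypotheses, the sole nonroutine ingredient being the elementary root-counting argument establishing condition (i).
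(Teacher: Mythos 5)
Your proof is correct and follows essentially the same route as the paper: both deduce the corollary as a direct specialization of Theorem~\ref{theo:RTfull}, invoking Proposition~\ref{prop:examplessamplingadm} to obtain sampling admissibility of $(\Dop^{N_T},\Dop^{N_S})$ from $N_T, N_S \geq 2$ and landing in the case $N_T \geq 1$. Your write-up is in fact slightly more complete than the paper's own proof, which never verifies hypothesis (i) of Theorem~\ref{theo:RTfull} (injectivity of $\bm{\Phi}$ on the null space of $\Dop^{N_T}$); your root-counting argument supplies exactly this check and explains why the assumption $L \geq N_T$ appears in the statement.
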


The composite reconstructed function $\tilde{f} = \tilde{f}_T + \tilde{f}_S$ of Corollary \ref{coro:samplingderivative} is then given by
\begin{equation}
    \tilde{f} (t)=   \sum_{k=1}^{K_T} a_k \frac{(t - u_k)_+^{N_T}}{(N_T -1)!} + \sum_{n=1}^{N_T} c_n t^{n-1}  + \sum_{k=1}^{K_S} b_k \rho_{\Dop^{N_S}} (t - v_k)
\end{equation}
with $\sum_{k=1}^{K_S} b_k = 0$,
where we recall that $\rho_{\Dop^{N_S}}$ is the periodic Green's function with $0$ mean such that $\Dop^{N_S} \rho_{\Dop^{N_S}} = \mathrm{III} - 1$. The trend part $\tilde{f}_T$ is a piecewise polynomial of degree $(N_T - 1)$ with $\mathcal{C}^{(N_T-2)}$ junctions at its knots, while the seasonal part $\tilde{f}_S$ is a periodic piecewise polynomial of degree $(N_S - 1)$ with $\mathcal{C}^{(N_S-2)}$ junctions at its knots. 
For instance, with $N_T = N_S = 2$, Corollary \ref{coro:samplingderivative} provides a seasonal-trend piecewise linear and continuous reconstruction based on sampled measurements. 

\begin{proof}[Proof of Corollary \ref{coro:samplingderivative}]
Since $N_T \geq 2$ and $N_S \geq 2$, the pair $(\Dop^{N_T},\Dop^{N_S})$ is sampling-admissible according to Proposition~\ref{prop:examplessamplingadm}. We set
\begin{equation}
    \phi_{\ell} (f_T, f_S) = (f_T + f_S) (x_\ell) = \langle f_T, \delta(\cdot - x_\ell) \rangle_{\MLT, \CLT} + \langle f_S, \mathrm{III}(\cdot - x_\ell) \rangle_{\MLS, \CLS}. 
\end{equation}
Then, Corollary~\ref{coro:samplingderivative} is a particular case of Theorem~\ref{theo:RTfull} with $N_T \neq 0 $. We therefore deduce the properties of $\mathcal{V}$, the form of the extreme points \eqref{eq:extremepointsmeasures}, and the bound \eqref{eq:boundKSKTforderivative}. 
\end{proof}

\subsection{Seasonal-Trend Representer Theorem for Hilbert Spaces} \label{sec:hilbert}

In this work, we consider regularizations based on total-variation norms. Its sparsity-promoting effect and its adaptiveness has proven to be efficient on several applications~\cite{courbot2020sparse,simeoni2020functional,simeoni2021functional,denoyelle2019sliding}. In this section, we consider quadratic optimization problems adapted to the seasonal-trend framework. In this setting, total-variation norms are replaced by quadratic norms and we use the quadratic data fidelity $\mathcal{E}( \bm{z} ) = \| \bm{y} - \bm{z} \|_2^2$, where $\bm{y} \in \R^L$ is a fixed observation vector.

For simplicity, we only consider invertible trend- and seasonal-operators $\LT$ and $\LS$ and equal regularization parameters $\lambda_T = \lambda_S =\lambda >0$. We define the native space of the optimization problem as 
\begin{equation}
    \mathcal{L}_2(\LT,\LS) = \LT^{-1}\{ \mathcal{L}_2(\R)\} \times \LS^{-1} \{\mathcal{L}_2(\T)\},
\end{equation}
on which $\| (f_T,f_S) \|_{\mathcal{L}_2(\LT,\LS)} = (\lVert \LT f_T \rVert_{\mathcal{L}_2(\R)}^2  + \lVert \LS f_S \rVert_{\mathcal{L}_2(\T)}^2)^{1/2}$ is a Hilbertian norm.
As we did in Proposition~\ref{prop:directsumdecomposition} for TV-normed spaces, one can prove that the sum $\LT^{-1}\{ \mathcal{L}_2(\R)\} \oplus \LS^{-1} \{\mathcal{L}_2(\T)\}$ is direct. Hence, any $f \in \LT^{-1}\{ \mathcal{L}_2(\R)\} \oplus \LS^{-1} \{\mathcal{L}_2(\T)\}$ can be uniquely decomposed as $f = f_T + f_S$ with $\LT f_T \in \mathcal{L}_2(\R)$ and $\LS f_S \in \mathcal{L}_2(\T)$. We therefore identify $f$ with its associated couple $(f_T,f_S)$. 

\begin{proposition}[Quadratic Seasonal-Trend Representer Theorem]
\label{prop:RTquadratic}
We consider the following:
\begin{itemize}
    \item $\LT$ and $\LS$ are invertible trend- and seasonal-operators, respectively;
    \item $\bm{y} \in \R^L$ is an observation vector; 
    \item $\bm{\Phi} = (\phi_1,\ldots , \phi_L)$ is a family of generalized functions such that $\phi_\ell \in \LT^* \{ \mathcal{L}_2(\R) \}$, $\phi_\ell$ is periodizable, and $\Per\{ \phi_\ell \} \in \LS^{*}\{ \mathcal{L}_2(\T)\} $ for any $1 \leq \ell \leq L$; and 
    \item $\lambda > 0$.
\end{itemize}
Then, the optimization problem 
\begin{equation}
\label{eq:optiquadraproblem} 
    \underset{(f_T,f_S) \in \mathcal{L}_2(\LT,\LS)}{\inf} \quad  \| \bm{y} - \bm{\Phi} ( f_S, f_T) \|_2^2   + \lambda  
    \left(  \lVert \LT f_T \rVert_{\mathcal{L}_2(\R)}^2  + \lVert \LS f_S \rVert_{\mathcal{L}_2(\T)}^2  \right),
\end{equation}  
admits a unique solution $(\tilde{f}_T,\tilde{f}_S)$ such that 
\begin{equation} \label{eq:formsolutionquadrat}
    \tilde{f}_T = \sum_{1\leq \ell \leq L} \alpha_\ell (\phi_\ell * \Psi_{\LT^* \LT} ) 
    \quad \text{and} \quad
    \tilde{f}_S = \sum_{1\leq \ell \leq L} \alpha_\ell (\Per \{\phi_\ell\} * \rho_{\LS^* \LS} ),
\end{equation}
where the vector $\bm{\alpha} = (\alpha_1, \ldots , \alpha_L) \in \R^L$ is such that $\bm{\alpha} = (\bm{\mathrm{G}} + \lambda \mathrm{Id}_L)^{-1} \bm{y}$ with
\begin{equation} \label{eq:findGexpression}
    G[\ell_1,\ell_2] = \langle \LT^{-1*} \phi_{\ell_1} , \LT^{-1*} \phi_{\ell_2} \rangle_{\mathcal{L}_2(\R) } + \langle \LS^{-1*} \Per\{\phi_{\ell_1}\} , \LS^{-1*} \Per\{\phi_{\ell_2}\} \rangle_{\mathcal{L}_2(\T) }. 
\end{equation}
\end{proposition}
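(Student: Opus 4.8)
The plan is to exploit the invertibility of $\LT$ and $\LS$ to recast the problem as a standard Tikhonov regularization on the Hilbert space $\mathcal{H} = \mathcal{L}_2(\R)\times \mathcal{L}_2(\T)$, apply the classical Hilbert-space representer theorem, and then translate the abstract minimizer back into the explicit convolution form.

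First I would introduce the change of variable $(u_T,u_S) = (\LT f_T, \LS f_S)$, which is an isometric isomorphism from $\mathcal{L}_2(\LT,\LS)$ onto $\mathcal{H}$ carrying $\| (f_T,f_S) \|_{\mathcal{L}_2(\LT,\LS)}$ to $\| (u_T,u_S) \|_{\mathcal{H}}$. Under this map $f_T = \LT^{-1} u_T$ and $f_S = \LS^{-1} u_S$, so by the definition of the sensing functionals together with the adjoint relation $\langle \LT^{-1} u_T, \phi_\ell\rangle = \langle u_T, \LT^{-1*}\phi_\ell\rangle$ (and analogously for the seasonal part), each measurement rewrites as an inner product $\phi_\ell(f_T,f_S) = \langle (u_T,u_S), r_\ell\rangle_{\mathcal{H}}$ with representer $r_\ell = (\LT^{-1*}\phi_\ell,\ \LS^{-1*}\Per\{\phi_\ell\})$. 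The crucial well-posedness check — where the hypotheses enter — is that $r_\ell \in \mathcal{H}$: writing $\phi_\ell = \LT^* h$ for some $h \in \mathcal{L}_2(\R)$ gives $\LT^{-1*}\phi_\ell = h \in \mathcal{L}_2(\R)$ because $\LT^{-1*}\LT^* = \mathrm{Id}$, and the same reasoning applied to $\Per\{\phi_\ell\} \in \LS^*\{\mathcal{L}_2(\T)\}$ yields $\LS^{-1*}\Per\{\phi_\ell\} \in \mathcal{L}_2(\T)$.

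The problem is now $\min_{v \in \mathcal{H}} \| \bm{y} - (\langle v, r_\ell\rangle_{\mathcal{H}})_{1\leq \ell \leq L}\|_2^2 + \lambda \| v\|_{\mathcal{H}}^2$. Since $\lambda > 0$, this objective is strictly convex, coercive and lower semi-continuous, so it admits a unique minimizer $\tilde v$. Setting its Gateaux derivative to zero gives the stationarity condition $\lambda \tilde v = \sum_\ell (y_\ell - \langle \tilde v, r_\ell\rangle_{\mathcal{H}}) r_\ell$, so $\tilde v$ belongs to the span of the representers; substituting $\tilde v = \sum_\ell \alpha_\ell r_\ell$ (with $\alpha_\ell := \lambda^{-1}(y_\ell - \langle \tilde v, r_\ell\rangle_{\mathcal{H}})$, which requires no independence of the $r_\ell$) produces the normal equations $(\bm{G} + \lambda \mathrm{Id}_L)\bm{\alpha} = \bm{y}$ with $G[\ell_1,\ell_2] = \langle r_{\ell_1}, r_{\ell_2}\rangle_{\mathcal{H}}$, which is precisely \eqref{eq:findGexpression}. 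As $\bm{G}$ is a Gram matrix it is positive semi-definite, hence $\bm{G}+\lambda\mathrm{Id}_L$ is invertible and $\bm{\alpha} = (\bm{G}+\lambda\mathrm{Id}_L)^{-1}\bm{y}$.

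Finally I would invert the change of variable. From $\tilde u_T = \sum_\ell \alpha_\ell \LT^{-1*}\phi_\ell$ one gets $\tilde f_T = \LT^{-1}\tilde u_T = \sum_\ell \alpha_\ell (\LT^*\LT)^{-1}\phi_\ell$; since $\LT^*\LT$ is LSI and invertible, its inverse is convolution with the Green's function $\Psi_{\LT^*\LT} = (\LT^*\LT)^{-1}\delta$, giving $\tilde f_T = \sum_\ell \alpha_\ell (\phi_\ell * \Psi_{\LT^*\LT})$; the identical computation in the periodic setting, with $\mathrm{III}$ as the convolution identity and $\rho_{\LS^*\LS} = (\LS^*\LS)^{-1}\mathrm{III}$, yields the stated form of $\tilde f_S$. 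The representer-theorem machinery itself is routine once $\mathcal{H}$ is in place; I expect the main obstacle to be the bookkeeping of the adjoint and Green's-function identities needed to pass from the abstract minimizer in $\mathcal{H}$ to the explicit convolution formulas — in particular justifying that $(\LT^*\LT)^{-1}$ acts as convolution with $\Psi_{\LT^*\LT}$ and correctly handling the periodic convolution against $\mathrm{III}$ on $\Sp'(\T)$.
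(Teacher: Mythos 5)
Your proof is correct and follows essentially the same route as the paper: both reduce \eqref{eq:optiquadraproblem} to a standard Tikhonov/ridge problem on a Hilbert space, identify the measurement representers through the adjoint relations, and convert back to the stated form using the identity $\LT^{-1}\LT^{-1*}\phi_\ell = \phi_\ell * \psi_{\LT^*\LT}$ together with its periodic analogue. The only cosmetic differences are that you pass to the flat space $\mathcal{L}_2(\R)\times\mathcal{L}_2(\T)$ via the isometry $(f_T,f_S)\mapsto(\LT f_T,\LS f_S)$ and re-derive the finite-dimensional reduction from the first-order optimality condition, whereas the paper works directly with the weighted inner product on $\mathcal{L}_2(\LT,\LS)$ and invokes an abstract Hilbert-space representer theorem.
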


\begin{proof}
We recall a classical abstract representer theorem over Hilbert spaces. We follow the presentation of~\cite[Theorem 31]{caponera2022functional}. 
Let $\mathcal{H}$ be some Hilbert space and ${\nu}_\ell \in \mathcal{H}$ for  $1 \leq \ell \leq L$. We set $\bm{\nu}(f) = (\langle f,  \nu_\ell \rangle_{\mathcal{H}})_{1\leq \ell \leq L}$. 
Then, the optimization problem
\begin{equation} \label{eq:quadrabstract}
    \underset{f \in \mathcal{H}}{\min}   \quad \| \bm{y} - \bm{\nu} (f) \|_2^2 + \lambda \| f \|_{\mathcal{H}}^2
\end{equation}
admits a unique solution $\tilde{f} = \sum_{1 \leq \ell \leq L} \alpha_\ell \nu_\ell$ where $\bm{\alpha} = (\bm{\mathrm{G}} + \lambda \mathrm{Id}_L)^{-1} \bm{y}$ with  $\bm{\mathrm{G}} = ( \langle \nu_{\ell_1} , \nu_{\ell_2} \rangle_{\mathcal{H}} )_{1\leq \ell_1 , \ell_2 \leq L}$. 

We apply this result to $\mathcal{H} = \mathcal{L}_2(\LT,\LS)$ for the inner product  given by 
\begin{equation} \label{eq:defineinneprod}
   \langle (f_{T,1}, f_{S,1}) ,  (f_{T,2}, f_{S,2}) \rangle_{\mathcal{H}}
   = 
   \langle \LT f_{T,1}, \LT f_{T,2} \rangle_{\mathcal{L}_2(\R)}
   +
    \langle \LS f_{S,1}, \LS f_{S,2} \rangle_{\mathcal{L}_2(\T)}.
\end{equation}
Then, the dual spaces of $\LT^{-1}\{ \mathcal{L}_2(\R)\}$ and $\LS^{-1}\{ \mathcal{L}_2(\T)\}$  are $\LT^{*} \{ \mathcal{L}_2(\R)\}$ and $\LS^{*} \{ \mathcal{L}_2(\T)\}$, respectively. By assumption on $\bm{\Phi}$, for each $1 \leq \ell \leq L $,  $\phi_\ell$ is  in the dual space $(\mathcal{L}_2(\LT,\LS))' = \LT^{*} \{ \mathcal{L}_2(\R)\} \times \LS^{*} \{ \mathcal{L}_2(\T)\}$ and such that
\begin{align} \label{eq:computestuff}
  \phi_\ell (f_T,f_S) 
 &= \langle f_T , \phi_\ell \rangle_{\LT^{-1}\{ \mathcal{L}_2(\R)\}, \LT^*\{ \mathcal{L}_2(\R)\}} + \langle f_S ,\Per\{ \phi_\ell \} \rangle_{\LS^{-1}\{ \mathcal{L}_2(\T)\}, \LS^{*}\{ \mathcal{L}_2(\T)\}} \nonumber \\
 &= \langle \LT f_T , \LT^{-1*} \phi_\ell \rangle_{\mathcal{L}_2(\R)} + \langle \LS f_S , \LS^{-1*} \Per \{ \phi_\ell \} \rangle_{\mathcal{L}_2(\T)} \nonumber \\
 &= \langle \LT f_T , \LT \{ \LT^{-1} \LT^{-1*} \phi_\ell \} \rangle_{\mathcal{L}_2(\R)} + \langle \LS f_S ,  \LS \{ \LS^{-1} \LS^{-1*} \Per \{ \phi_\ell \} \} \rangle_{\mathcal{L}_2(\T)} \nonumber \\
 &= \langle \LT f_T , \LT \{ \phi_\ell * \psi_{\LT^* \LT} \} \rangle_{\mathcal{L}_2(\R)} + \langle \LS f_S ,  \LS \{ \Per \{\phi_\ell\} * \rho_{\LS^* \LS}  \} \rangle_{\mathcal{L}_2(\T)}, 
\end{align}
where we used that $\LT^{-1} \LT^{-1*} \{ \phi_\ell\} = \phi_\ell * \LT^{-1} \LT^{-1*} \delta = \phi_\ell * \psi_{\LT^* \LT}$ and similarly for the seasonal part. 
We therefore deduce using \eqref{eq:defineinneprod} that 
\begin{equation}
   \phi_\ell (f_T,f_S)  =  \langle (f_T,f_S) , ( \phi_\ell * \psi_{\LT^* \LT} , \Per \{\phi_\ell\} * \rho_{\LS^* \LS}  ) \rangle_{\mathcal{H}} =  \langle (f_T,f_S) , \nu_\ell \rangle_{\mathcal{H}}, 
\end{equation}  
where we set $\nu_\ell = ( \phi_\ell * \psi_{\LT^* \LT} , \Per \{\phi_\ell\} * \rho_{\LS^* \LS}  ) \in \mathcal{H}$. 
With these specifications, the optimization problem \eqref{eq:optiquadraproblem} is hence equivalently rewritten as \eqref{eq:quadrabstract}. In particular, we deduce that there is a unique solution $(\tilde{f}_T, \tilde{f}_S) = \sum_{1 \leq \ell \leq L} \alpha_\ell \nu_\ell$, which is precisely \eqref{eq:formsolutionquadrat}.
 
Moreover, we know that $\bm{\alpha} = (\bm{\mathrm{G}} + \lambda \mathrm{Id}_L)^{-1} \bm{y}$ with, for any $1 \leq \ell_1,\ell_2\leq L$,
\begin{align} 
G[\ell_1, \ell_2] &= \langle \nu_{\ell_1} , \nu_{\ell_2} \rangle_{\mathcal{H}}  \nonumber \\ & 
= \langle \LT  \LT^{-1} \LT^{-1*} \phi_{\ell_1}  , \LT  \LT^{-1} \LT^{-1*} \phi_{\ell_2} \rangle_{\mathcal{L}_2(\R)} +   \langle \LS  \LS^{-1} \LS^{-1*} \Per\{\phi_{\ell_1} \} , \LS  \LS^{-1} \LS^{-1*} \Per\{\phi_{\ell_2}\} \rangle_{\mathcal{L}_2(\T)}    \nonumber \\ & 
= \langle   \LT^{-1*} \phi_{\ell_1}  ,  \LT^{-1*} \phi_{\ell_2} \rangle_{\mathcal{L}_2(\R)} +   \langle  \LS^{-1*} \Per\{\phi_{\ell_1} \} ,   \LS^{-1*} \Per\{\phi_{\ell_2}\} \rangle_{\mathcal{L}_2(\T)}, 
\end{align}
which gives~\eqref{eq:findGexpression}.
\end{proof}

We can compare the two representer theorems, \emph{i.e.} Theorem~\ref{theo:RTfull} and Proposition~\ref{prop:RTquadratic} with TV-based and $\mathcal{L}_2$-based regularizations, respectively. We fix the pair $(\LT,\LS)$ of regularization operators. The quadratic regularization leads to finite-dimensional approximation over a space depending on the measurement process via the $\phi_\ell$ and on the regularization operators via their Green's functions. 

In addition to the other classical oppositions between $\mathcal{L}_2$- and TV-based regularization (see for instance~\cite{gupta2018continuous,fageot2020tv}), we observe here that one cannot decouple the trend component $\phi_\ell * \Psi_{\LT^* \LT}$ and the seasonal component $\Per \{\phi_\ell\} * \rho_{\LS^* \LS}$ via the weight $\alpha_\ell$ (see \eqref{eq:formsolutionquadrat}). This coupling shows some limitation for the seasonal-trend reconstruction. For instance, it is impossible to reconstruct a pure trend or pure seasonal signal. In contrast, the TV-based approach puts no intrinsic constraints and is therefore more flexible and adaptive for seasonal-trend reconstruction.

\section{Grid-Based Approximation and Algorithms}\label{sec:gridbased}

The representer theorem (Theorem~\ref{theo:RTfull}) specifies the spline form of the extreme point solutions of seasonal-trend functional inverse problems with TV-based regularizations. It implies in particular the existence of composite spline solutions. However, Theorem~\ref{theo:RTfull} does not reveal how to reach a spline solution. In particular, finding the knot positions $u_k \in \R$ and $v_k \in \T$ of the trend- and seasonal-components in~\eqref{eq:formofsolution1} is especially challenging.

Several algorithmic approaches have been proposed to solve TV-based optimization problems (see Section~\ref{sec:related}). In this work, we adapt existing grid-based methods to the case of seasonal-trend reconstruction and study their convergence properties. To the best of our knowledge, this is the first systematic study of the convergence of grid-based methods to the case of a composite signal model.

\subsection{Discretized Search Space and Representer Theorem} \label{sec:discretizedRT}

We fix the grid parameters $h_T > 0$ and $h_S > 0$ such that $n_S := 1/h_S \in \mathbb{N}$ is an integer. We consider the discretized spaces of Radon measures
\begin{align}
    \mathcal{M}_{h_T}(\R) &= \left\{ w_T = \sum_{k \in \Z} a_k \delta ( \cdot - h_T k), \quad  a = (a_k)_{k\in \Z}  \in \ell_1(\Z) \right\} \subset \MR \label{eq:Mht} \\
    \mathcal{M}_{h_S}(\T) &= \left\{ w_S = \sum_{1 \leq k \leq n_S} b_k \mathrm{III} ( \cdot - h_S k) \right\} \subset \MS.
\end{align}
The inclusion in \eqref{eq:Mht} is due to $ \| w_T \|_{\MR} = \| \sum_{k \in \Z} a_k \delta ( \cdot - h_T k)  \|_{\MR} = \sum_{k \in \Z} |a_k| =  \| a \|_{\ell_1(\Z)} < \infty$. 
Note moreover that $\| w_S \|_{\MS} = \|\sum_{1 \leq k \leq n_S} b_k \mathrm{III} ( \cdot - h_S k)\|_{\MS} = \sum_{1 \leq k \leq n_S}| b_k| = \|\bm{b} \|_1$.

We define the discretized native space $\MhThS$ as we did for $\MLTLS$ in Section~\ref{sec:native}.

\begin{definition}
   \label{def:discretizedsearchspace}
   Let $h_T > 0 $, $h_S>0$ such that $n_S = 1/h_S \in \mathbb{N}$, and $(\LT,\LS)$ be a couple of trend- and seasonal-admissible operators. Then, we define 
\begin{align} \label{eq:MhtMhs}
    \mathcal{M}_{h_T,\LT}(\R) &= \{ f \in \Sp'(\R), \quad \LT f \in \mathcal{M}_{h_T}(\R) \} \quad \text{and} \quad 
    \mathcal{M}_{h_S,\LS}(\T) &= \{ f \in \Sp'(\T), \quad \LS f \in \mathcal{M}_{h_S}(\T) \}.
\end{align}
We moreover define $\mathcal{M}_{h_T,h_S}(\LT,\LS)$ as in Definition~\ref{def:STnative} by replacing $\MLT$ and $\MLS$ with their discretized versions~\eqref{eq:MhtMhs}. 
\end{definition}

An element $(f_T,f_S) \in \MhThS$ can be uniquely written as
\begin{align} \label{eq:discretizedfTfS}
    f_T(t) &= \sum_{k \in \Z} a_k \psi_{\LT} (t - h_T k) + \sum_{n=1}^{N_T} c_n t^{n-1}, \nonumber \\
    f_S (t) &= \sum_{1\leq k \leq n_S} b_k \rho_{\LS} (t - h_S k) + \alpha,
\end{align}
where $a = (a_k)_{k\in \Z} \in \ell_1(\Z)$, $\bm{c} = (c_1, \ldots , c_{N_T}) \in \R^{N_T}$, $\bm{b} = (b_1,\ldots, b_{n_S}) \in \R^{n_S}$, and $\alpha \in \R$. Moreover, we have that (i) $\sum_{k=1}^{n_S} b_k = 0$ as soon as $N_T \geq 1$ or $N_T=0$ and $N_S\geq 1$ and (ii) $\alpha$ appears only if $N_T = 0$ and $N_S\geq 1$. 

The discretized search space $\MhThS$ inherits the Banach space structure of $\MLTLS$ (it is easily shown to be closed). We can moreover obtain useful isometrics relations for the discretized search space from the relations \eqref{eq:discretizedfTfS}. Assume for instance that $\LT$ and $\LS$ are invertible. Then, $\MhThS$ is linearly isometric to $\ell_1(\Z) \times \R^{n_s}$ for the norm $(a,\bm{b}) \mapsto \|a\|_{\ell_{1}(\Z)} + \| \bm{b} \|_1$ via 
\begin{equation}
    (f_T, f_S) \mapsto (a,\bm{b}) = ((a_k)_{k\in \Z}, (b_k)_{1\leq k \leq n_S}), 
\end{equation}
where $a \in \ell_1(\Z)$ and $\bm{b}\in \R^{n_S}$ are given by \eqref{eq:discretizedfTfS}. We indeed have that
$\| (f_T, f_S) \|_{\MLTLS} = \|a\|_{\ell_{1}(\Z)} + \| \bm{b} \|_1$. We can generalize this principle and distinguish the three cases (i) $N_T=N_S =0$, (ii) $N_T \geq 1$, and (iii) $N_T = 0$ and $N_S\geq 1$ to obtain the general case, exposed in Proposition~\ref{prop:discretizedsearchspace}. The proof, that we do not detail, have similar arguments than the one exposed in the proof of Theorem~\ref{theo:RTfull} and exploits the representation \eqref{eq:discretizedfTfS}.

\begin{proposition}\label{prop:discretizedsearchspace}
    Let $\LT$ and $\LS$ be trend-admissible and seasonal-admissible operators, with respective orders $N_T, N_S \geq 0$.
    Let $1 \leq p \leq \infty$. Then, $\MhThS$ is a Banach space for the restriction of the norm $\| \cdot \|_{\MLTLS,p}$ defined in Proposition \ref{prop:banachnative}. 
    Then, $\left( \MhThS , \| \cdot \|_{\MLTLS,p} \right)$ is isometric to
    \begin{itemize}
        \item $\ell_1(\Z) \times \R^{n_S}$ with norm $$(a,\bm{b}) \mapsto (\| a \|_{\ell_1(\Z)}^p + \|\bm{b} \|_1^p)^{1/p}$$ if $N_T = N_S = 0$;
        \item $(\ell_1(\Z) \times \R^{N_T}) \times \{ \bm{b} \in \R^{n_S}, \ \sum_{k=1}^{n_S} b_k = 0\}$ with norm $$((a,\bm{c}),\bm{b}) \mapsto \left( (\| a \|_{\ell_1(\Z)} + \| \bm{c} \|_2 )^p + \|\bm{b} \|_1^p\right)^{1/p}$$ if $N_T \geq 1$ or if $N_T = 0$ and $N_S\geq 1$;
        \item $\ell_1(\Z) \times \left( \{ \bm{b} \in \R^{n_S}, \ \sum_{k=1}^{n_S} b_k = 0\} \times \R \right)$ with norm $$(a,(\bm{b}, \alpha)) \mapsto \left( \| a \|_{\ell_1(\Z)}^p + (\|\bm{b} \|_1 + |\alpha| )^p\right)^{1/p}$$ if $N_T = 0$ and $N_S\geq 1$;
    \end{itemize}
    with the usual modification for $p=\infty$.
\end{proposition}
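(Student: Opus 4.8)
The plan is to realize $\MhThS$ explicitly as a sequence/coefficient space via the unique representation~\eqref{eq:discretizedfTfS}, and to check that the induced coordinate map is a linear isometric bijection onto the stated model space; Banach-ness and closedness in $\MLTLS$ then come for free from the completeness of that model space. Concretely, I would define the linear map $T$ sending $(f_T,f_S)$ to the tuple $(a,\bm{c},\bm{b},\alpha)$ appearing in~\eqref{eq:discretizedfTfS}, retaining only the components present in each regime, and show that $T$ is a bijection onto $\ell_1(\Z)\times\R^{n_S}$ when $N_T=N_S=0$, onto $(\ell_1(\Z)\times\R^{N_T})\times\{\bm{b}\in\R^{n_S}:\sum_k b_k=0\}$ when $N_T\geq 1$, and onto $\ell_1(\Z)\times(\{\bm{b}:\sum_k b_k=0\}\times\R)$ when $N_T=0$ and $N_S\geq 1$.

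First I would justify that~\eqref{eq:discretizedfTfS} is genuinely unique, so that $T$ is well defined and injective. The direct-sum splitting of Proposition~\ref{prop:directsumdecomposition} separates the trend and seasonal parts; within the trend component, $\LT f_T=\sum_k a_k\delta(\cdot-h_Tk)$ determines $a\in\ell_1(\Z)$ (distinct knots), after which $f_T-\sum_k a_k\psi_{\LT}(\cdot-h_Tk)$ lies in the null space $\mathcal{P}_{N_T}(\R)$ and equals $\sum_n c_n t^{n-1}$ for a unique $\bm{c}$ (monomials are independent). The seasonal coordinates $(\bm{b},\alpha)$ are recovered the same way, with $\sum_k b_k=0$ being forced either because $\LS f_S$ must be a genuine Dirac comb when $N_S\geq1$ (as $\LS\rho_{\LS}=\mathrm{III}-1$) or because $f_S\in\MLSz$ must have zero mean when $N_T\geq1$. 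Surjectivity is immediate, since any admissible tuple assembles a valid pair through~\eqref{eq:discretizedfTfS}.

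Next I would verify the isometry by computing each block of the norm~\eqref{eq:normMLTLSpfinite}--\eqref{eq:normMLTLSpinfinite}. The core identities are $\|\LT f_T\|_{\MR}=\|a\|_{\ell_1(\Z)}$ and $\|\LS f_S\|_{\MS}=\|\bm{b}\|_1$, both instances of the fact recorded in Section~\ref{sec:discretizedRT} that the total-variation norm of a Dirac comb with distinct knots is the $\ell_1$ norm of its weights. Substituting into the native-space norms yields $\|f_T\|_{\MLT}=\|a\|_{\ell_1(\Z)}+\|\bm{c}\|_2$ when $N_T\geq1$, $\|f_S\|_{\MLS}=\|\bm{b}\|_1+|\alpha|$ when $N_T=0$ and $N_S\geq1$ (since $\rho_{\LS}$ is zero-mean, so $\langle f_S,1\rangle=\alpha$), and the bare $\ell_1$ values otherwise. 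Assembling these through the outer $\ell_p$ combination reproduces exactly the three model norms, and completeness of $\MhThS$ follows because each model space is Banach (the condition $\sum_k b_k=0$ cuts out a closed finite-codimension subspace), with the customary $\max$ replacement handling $p=\infty$.

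The main obstacle is the non-invertible trend regime $N_T\geq1$: there the trend native-space norm is not merely $\|\LT f_T\|_{\MR}$, and one must invoke the precise direct-sum construction $\CLT=\LT^{*}\CR\oplus(\mathcal{P}_{N_T}(\R))'$ from Section~\ref{sec:preduality1} (following~\cite{unser2017splines,unser2019native}) to certify that the polynomial coefficients enter through the Euclidean block $\|\bm{c}\|_2$ rather than some other finite-dimensional norm. Once this identification is granted, the remaining work is the routine case bookkeeping and the $p$-norm assembly, exactly paralleling the argument structure in the proof of Theorem~\ref{theo:RTfull}.
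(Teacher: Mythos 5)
Your proposal is correct and follows essentially the same route the paper takes: the paper does not detail the proof, saying only that it exploits the unique representation~\eqref{eq:discretizedfTfS} together with the case distinction on $(N_T,N_S)$, and it sketches the invertible case just before the proposition via the identities $\|\LT f_T\|_{\MR}=\|a\|_{\ell_1(\Z)}$ and $\|\LS f_S\|_{\MS}=\|\bm{b}\|_1$. Your coordinate map with its injectivity/surjectivity check, the norm bookkeeping in the three regimes, and the deferral of the non-invertible trend norm to the construction of~\cite{unser2017splines,unser2019native} fill in exactly this skeleton, at (or above) the paper's own level of rigor.
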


We obtain a representer theorem over the discretized search space, that we detail in Proposition~\ref{prop:RTdiscretized}. 

\begin{proposition} \label{prop:RTdiscretized}
Assume that we are under the conditions of Theorem~\ref{theo:RTfull} and that $h_T > 0$ and $h_S > 0$ such that $n_S := 1/h_S \in \mathbb{N}$. 
Then, the solution set of the optimization problem with discretized search space
\begin{equation}
    \label{eq:discretizedoptisolutionset} 
        \mathcal{V}_{h_T,h_S} = \underset{(f_T,f_S) \in \MhThS}{\arg\min} \quad \mathcal{E}(\bm{\Phi}(f_T,f_S))  + \lambda_T
         \lVert \LT f_T \rVert_{\MR} + \lambda_S \lVert \LS f_S \rVert_{\MS}
\end{equation}    
is non-empty, convex, and weak*-compact in $\MhThS$. Moreover, any extreme point solution $(\tilde{f}_S, \tilde{f}_T)$ is made of \emph{uniform} $\LT$- and periodic $\LS$-spline functions such that
\begin{align} \label{eq:formofsolution1}
       \LT \tilde{f}_T  &= \sum_{k=1}^{K_T} a_k \delta(\cdot - n_k h_T)    \quad \text{ and } \quad
 \LS \tilde{f}_S  = \sum_{k=1}^{K_S} b_k \mathrm{III} (\cdot - m_k h_S) ,
\end{align}    
where $a_k \neq 0$,  the $n_k \in \Z$ are distinct, $b_k \neq 0$,  and the $m_k \in \{0, 1,  \ldots, (n_S-1)\}$ are distinct. Moreover, we have that $\sum_{k=1}^{K_S} b_k = 0$ as soon as $N_T=0$.
Finally, the total number of knots is bounded by
\begin{equation} \label{eq:boundKSKT}
    K_T + K_S \leq 
\begin{cases} 
  L  &  \text{ if } N_T= N_S = 0, \\
  L + 1 - N_T &  \text{ otherwise}. 
\end{cases}
\end{equation}
\end{proposition}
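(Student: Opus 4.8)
The plan is to reproduce the argument of Theorem~\ref{theo:RTfull} while exploiting the fact that pinning the knots to the grids $h_T \Z$ and $h_S\{1,\dots,n_S\}$ turns the two measure spaces into sequence spaces. Concretely, I would first apply the substitution $(w_T,w_S) = (\LT f_T, \LS f_S)$ used in the proof of Theorem~\ref{theo:RTfull}; under this change of variables the discretized constraints $\LT f_T \in \mathcal{M}_{h_T}(\R)$ and $\LS f_S \in \mathcal{M}_{h_S}(\T)$ become $w_T \in \mathcal{M}_{h_T}(\R)$ and $w_S \in \mathcal{M}_{h_S}(\T)$, together with the finite-dimensional unregularized components (the trend polynomial $\bm{c} \in \R^{N_T}$ and, when $N_T=0$ and $N_S\ge 1$, the constant $\alpha$). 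The problem then becomes a discretized analogue of Proposition~\ref{prop:RTmeasure}, posed over $\mathcal{M}_{h_T}(\R) \times \mathcal{M}_{h_S}(\T) \times \R^N$. The decisive point, supplied by Proposition~\ref{prop:discretizedsearchspace}, is that $\mathcal{M}_{h_T}(\R)$ is isometric to $\ell_1(\Z)$ and $\mathcal{M}_{h_S}(\T)$ to $(\R^{n_S},\|\cdot\|_1)$, so that the TV penalties $\|w_T\|_{\MR}$ and $\|w_S\|_{\MS}$ are literally the $\ell_1$-norms $\|a\|_{\ell_1(\Z)}$ and $\|\bm{b}\|_1$ of the grid coefficients.

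For existence, convexity and weak*-compactness I would run the coercivity and lower-semicontinuity argument of Proposition~\ref{prop:RTmeasure} intrinsically in $\MhThS$. Since $\ell_1(\Z)$ and $\R^{n_S}$ are dual Banach spaces (with preduals $c_0(\Z)$ and $\R^{n_S}$), Proposition~\ref{prop:discretizedsearchspace} endows $\MhThS$ with a predual and a weak*-topology; the data term $\mathcal{E}\circ\bm{\Phi}$ is weak*-lower semi-continuous, the $\ell_1$ penalties are weak*-lower semi-continuous and coercive along the regularized directions, while coercivity along the unregularized directions $(\bm{c},\alpha)$ follows from the injectivity hypothesis on $\bm{\Phi}$ restricted to the null space, exactly as in the coercivity step of Proposition~\ref{prop:RTmeasure}. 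Invoking \cite[Proposition 8]{gupta2018continuous} (or Banach--Alaoglu applied to the sublevel sets) then yields that $\mathcal{V}_{h_T,h_S}$ is non-empty, convex and weak*-compact.

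It remains to characterize the extreme points and to count the knots. Here I would appeal to the same abstract sparse representer principle used in the continuous setting \cite{boyer2019representer,gupta2018continuous}: the extreme points of the TV-unit balls of $\mathcal{M}_{h_T}(\R)$ and $\mathcal{M}_{h_S}(\T)$ are precisely the signed grid atoms $\pm\delta(\cdot-h_T k)$ and $\pm\mathrm{III}(\cdot-h_S k)$ (the extreme points of the $\ell_1$-balls being the signed canonical vectors), so the extreme points of the solution set are sparse combinations of these atoms. The bound on the total number of active atoms $K_T+K_S$ is dictated solely by the number $L$ of affine measurement constraints and the dimension of the unregularized subspace, and is therefore inherited verbatim from Theorem~\ref{theo:RTfull}, giving \eqref{eq:boundKSKT}; the vanishing-mean relation $\sum_k b_k = 0$ when $(N_T,N_S)\neq(0,0)$ follows from the direct-sum structure of $\MhThS$ exactly as in the non-discretized case. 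The form \eqref{eq:formofsolution1}, with knots forced onto the grids, is then immediate.

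I expect the only genuinely delicate point to be the extreme-point identification: one must verify the discrete analogue of the fact that extreme points of a TV-ball are single signed Diracs, and confirm that the weak*-compactness supplied by Banach--Alaoglu on the infinite-dimensional factor $\ell_1(\Z)$ interacts correctly with the finite-rank measurement map $\bm{\Phi}$. Everything else is a routine transcription of the proofs of Proposition~\ref{prop:RTmeasure} and Theorem~\ref{theo:RTfull} to the grid-restricted search space.
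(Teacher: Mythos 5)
Your proposal is correct and follows essentially the same route as the paper, whose own proof is only a sketch invoking exactly the ingredients you spell out: the change of variables and case distinction from Theorem~\ref{theo:RTfull}, the isometric identification of $\MhThS$ with $\ell_1(\Z)$-type sequence spaces from Proposition~\ref{prop:discretizedsearchspace}, the identification of extreme points (signed grid atoms) through these isometries, and the coercivity/lower semi-continuity argument of Proposition~\ref{prop:RTmeasure} for existence, convexity, and weak*-compactness. The details you flag as delicate (extreme points of the $\ell_1$ ball, weak*-continuity of $\bm{\Phi}$ with respect to the $c_0$-predual topology) are precisely what the paper leaves implicit, and your treatment of them is sound.
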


    \begin{proof}
The proof of Proposition~\ref{prop:RTdiscretized} is very similar to the one of Theorem~\ref{theo:RTfull} and distinguishes the three cases \(N_T=N_S =0\),  \(N_T \geq 1\), and \(N_T = 0\) and \(N_S\geq 1\). It relies on the isometric relationships of Proposition~\ref{prop:RTdiscretized}, the identification of the extreme points of the spaces via these isometric relationships, and the fact that the cost functional of \( \mathcal{V}_{h_T,h_S}\) has the good properties to ensure the existence of solutions and the form of the extreme point solutions, as was detailed in Proposition~\ref{prop:RTmeasure}.
\end{proof}

\subsection{Weak*-Convergence of Grid-Based Solutions} \label{sec:weakconv}

The  difference between Theorem~\ref{theo:RTfull} and Proposition~\ref{prop:RTdiscretized} is the restriction of the search space. Therefore, a solution \((\tilde{f}_T, \tilde{f}_S) \in \mathcal{V}_{h_T,h_S}\) is in general not a solution of the continuous-domain original optimization task \eqref{eq:mainopti}. However, one can hope to approximate a continuous-solution when the grid parameters \(h_T,h_S\) are small enough.

In the next two sections, we restrict to the quadratic data-fidelity
\begin{equation}
    \label{eq:quadraE} \mathcal{E}( \bm{z} ) = \| \bm{y} - \bm{z} \|_2^2
\end{equation}
where \(\bm{y} \in \R^L\) is an observation vector. 
As we did in Proposition~\ref{prop:RTmeasure}, we first consider the optimization problem~\eqref{eq:Woptipbmeasure}, for which we obtain grid-based convergence results that will be useful for the general case.

The convergence results rely on the notion of \(\Gamma\)-convergence for optimization problems over metric spaces~\cite[Definition 4.1]{dal2012introduction}.  We denote by \(\overset{*}{\rightarrow}\) the convergence for the weak*-topology.

\begin{definition}
    Let \(\mathcal{X}\) be a Banach space and \(\mathcal{X}\) its topological dual. 
    Consider an optimization problem
    \begin{equation}
        \label{eq:genericproblem} 
        \mathcal{U} = \underset{f \in \mathcal{X}'}{\arg \min} \quad J(f)
    \end{equation}
    for some cost functional \(J\).  
    Assume that we have a family of subspaces \(\mathcal{X}'_n \subset \mathcal{X}'\) for \(n \geq 0\) and set 
    \begin{equation}
        \label{eq:genericproblemapprox} 
        \mathcal{U}_n = \underset{f \in \mathcal{X}'_n}{\arg \min} \quad J(f).
    \end{equation}
    Then, we say that the optimization problems \eqref{eq:genericproblemapprox} \(\Gamma\)-converge to \eqref{eq:genericproblem} when \(n\rightarrow \infty\) if the two following conditions hold:
    \begin{itemize}
        \item For any \(f \in \mathcal{X}'\) and any sequence \((f_n)_{n\geq 0}\) with \(f_n \in \mathcal{X}'_n\) such that \(f_n \overset{*}{\underset{n}{\rightarrow}} f\), we have
        \begin{equation} \label{eq:liminf}
            \underset{n\rightarrow \infty}{\lim \inf} \quad   J(f_n) \geq J(f).
        \end{equation}
        \item For any \(f \in \mathcal{X}'\), there exists a sequence \((f_n)_{n\geq 0}\) with \(f_n \in \mathcal{X}'_n\) and \(f_n \overset{*}{\underset{n}{\rightarrow}} f\) such that
        \begin{equation}  \label{eq:limsup}
            \underset{n\rightarrow \infty}{\lim \sup} \quad  J(f_n) \leq J(f).
        \end{equation}
    \end{itemize}
\end{definition}

\begin{proposition} \label{prop:weakconvmeasure}
Assume that we are in the conditions of Proposition~\ref{prop:RTmeasure}, where \(\mathcal{E}\) is given by \eqref{eq:quadraE}. 
We define \(\mathcal{W}\) as in \eqref{eq:Woptipbmeasure} and set, for \(h_T > 0\) and \(h_S>0\) such that \(1/h_S \in \N\),
\begin{equation} \label{eq:Woptipbmeasurediscretized}
   \mathcal{W}_{h_T,h_S} = \underset{(w_T,w_S,\bm{c}) \in \mathcal{M}_{h_T}(\R) \times \mathcal{M}_{h_S}(\T) \times \R^N}{\arg\min} \quad \| \bm{y} -  \bm{\Phi}(w_T,w_S,\bm{c}) \|_2^2 +\lambda_T    \| w_T \|_{\MR} +   \lambda_S \| w_S \|_{\MS}  .
\end{equation}
Then, \( \mathcal{W}_{h_T,h_S} \)  \(\Gamma\)-converges to \( \mathcal{W} \) when \((h_T, h_S) \rightarrow (0,0)\). \\

We consider sequences \((h_{T,n})_{n\geq 0}\) and \((h_{S,n})_{n\geq 0}\) such that \(1/ h_{S,n} \in \N\) and \((h_{T,n}, h_{S,n}) \rightarrow_n (0,0)\). 
Let \((\tilde{w}_{T,n} , \tilde{w}_{S,n}, \tilde{\bm{c}}_n) \in \mathcal{W}_{h_{T,n},h_{S,n}}\) for each \(n \geq 0\). Then, the sequence \((\tilde{w}_{T,n} , \tilde{w}_{S,n}, \tilde{\bm{c}}_n)\) has accumulation points for the weak*-topology on \(\MR\times \MS \times \R^N\), and any of these accumulation points is in \(\mathcal{W}\). In particular, if the solution of \eqref{eq:Woptipbmeasure} is unique and denoted by \((\tilde{w}_{T} , \tilde{w}_{S}, \tilde{\bm{c}})\), then we have that
\begin{equation} \label{eq:lastconvstar}
    (\tilde{w}_{T,n} , \tilde{w}_{S,n}, \tilde{\bm{c}}_n) \underset{n \rightarrow \infty}{\overset{*}{\longrightarrow}} (\tilde{w}_{T} , \tilde{w}_{S}, \tilde{\bm{c}}).
\end{equation}
\end{proposition}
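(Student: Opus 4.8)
The plan is to establish $\Gamma$-convergence by verifying the two conditions of the definition, and then to invoke the standard consequence of $\Gamma$-convergence together with a compactness argument to obtain convergence of the discrete minimizers. Throughout, write $J(w_T,w_S,\bm{c}) = \|\bm{y} - \bm{\Phi}(w_T,w_S,\bm{c})\|_2^2 + \lambda_T \|w_T\|_{\MR} + \lambda_S \|w_S\|_{\MS}$ for the common cost functional; the point is that $J$ itself does not change with the grid—only the search space $\mathcal{M}_{h_T}(\R) \times \mathcal{M}_{h_S}(\T) \times \R^N$ does. Since $\bm{\Phi}$ is weak*-continuous and the total-variation norms are weak*-lower semi-continuous, $J$ is weak*-lower semi-continuous on $\MR \times \MS \times \R^N$, which immediately yields the $\liminf$-inequality \eqref{eq:liminf}: if $(w_{T,n},w_{S,n},\bm{c}_n) \overset{*}{\to} (w_T,w_S,\bm{c})$, then $\liminf_n J(w_{T,n},w_{S,n},\bm{c}_n) \geq J(w_T,w_S,\bm{c})$ regardless of the subspaces involved.

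The substantive work is the $\limsup$-inequality (the recovery sequence) \eqref{eq:limsup}. Given an arbitrary target $(w_T,w_S,\bm{c}) \in \MR \times \MS \times \R^N$, I would construct discrete approximants by a quantization-of-mass procedure: approximate $w_T$ by a finitely supported measure and then snap each atom to the nearest grid point $h_T k$, and likewise snap the atoms of (a truncated approximation of) $w_S$ to the periodic grid $\{h_S k\}$, keeping the $\R^N$-component $\bm{c}$ fixed. The key requirements are that this construction (i) produces elements of $\mathcal{M}_{h_T}(\R) \times \mathcal{M}_{h_S}(\T) \times \R^N$, (ii) converges weak* to $(w_T,w_S,\bm{c})$ as $(h_T,h_S)\to(0,0)$, and (iii) does not increase the total-variation norm in the limit. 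Because mass is preserved under snapping, $\|w_{T,n}\|_{\MR}$ and $\|w_{S,n}\|_{\MS}$ can be kept bounded by (essentially) the norms of the approximants, and the data term converges since $\bm{\Phi}$ is weak*-continuous; a diagonal argument over a sequence of approximants converging to $w_T$ and $w_S$ in the appropriate sense then delivers $\limsup_n J(w_{T,n},w_{S,n},\bm{c}) \leq J(w_T,w_S,\bm{c})$. I expect this recovery-sequence construction—specifically controlling the weak* convergence of the snapped measures against arbitrary test functions in $\CR$ and $\CT$ while simultaneously controlling the norms—to be the main obstacle.

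For the convergence of minimizers, I would first note that the solution set $\mathcal{W}$ is nonempty, convex, and weak*-compact by Proposition~\ref{prop:RTmeasure}. A uniform coercivity bound is needed: since each discrete problem contains, for example, the feasible point obtained by snapping a fixed minimizer of the continuous problem, the optimal values $J_n := \min J$ over $\mathcal{W}_{h_{T,n},h_{S,n}}$ are bounded above uniformly in $n$. Combined with the coercivity of $J$ established in Proposition~\ref{prop:RTmeasure}, this forces the minimizers $(\tilde{w}_{T,n},\tilde{w}_{S,n},\tilde{\bm{c}}_n)$ to lie in a norm-bounded, hence weak*-compact (by Banach--Alaoglu, as these are duals of separable Banach spaces), subset of $\MR \times \MS \times \R^N$; thus accumulation points exist. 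For any such accumulation point $(\tilde{w}_T,\tilde{w}_S,\tilde{\bm{c}})$, extract a weak*-convergent subnet (or subsequence) and apply the standard $\Gamma$-convergence lemma: the $\liminf$-inequality gives $J(\tilde{w}_T,\tilde{w}_S,\tilde{\bm{c}}) \leq \liminf_n J_n$, while the $\limsup$-inequality applied to any $(w_T^\star,w_S^\star,\bm{c}^\star) \in \mathcal{W}$ gives $\liminf_n J_n \leq \limsup_n J_n \leq J(w_T^\star,w_S^\star,\bm{c}^\star) = \min J$, so $(\tilde{w}_T,\tilde{w}_S,\tilde{\bm{c}})$ attains the minimum and lies in $\mathcal{W}$. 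Finally, if $\mathcal{W} = \{(\tilde{w}_T,\tilde{w}_S,\tilde{\bm{c}})\}$ is a singleton, then every accumulation point of the bounded sequence equals this unique minimizer, which forces the full sequence to converge weak* to it, establishing \eqref{eq:lastconvstar}.
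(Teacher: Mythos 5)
Your proposal is correct and follows essentially the same route as the paper: weak*-lower semi-continuity of the cost for the $\liminf$-inequality, a mass-quantization recovery sequence for the $\limsup$-inequality, a uniform bound on the optimal values plus coercivity and Banach--Alaoglu to extract accumulation points, and the standard $\Gamma$-convergence lemma to identify them as elements of $\mathcal{W}$. The only difference is cosmetic: the paper's recovery sequence bins the mass of each cell directly, $w_{T,n} = \sum_{k} w_T([kh_{T,n},(k+1)h_{T,n}))\,\delta(\cdot - kh_{T,n})$, which handles arbitrary measures in one step (with norm control $\|w_{T,n}\|_{\MR}\leq\|w_T\|_{\MR}$ and weak* convergence via a modulus-of-continuity estimate), so your intermediate finitely-supported approximation and the diagonal argument you flag as the main obstacle are not needed.
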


\textbf{Remark.} Proposition~\ref{prop:weakconvmeasure} generalizes \cite[Proposition 4]{duval2017sparseI} from the space \(\MS\) to the space \(\MR \times \MS \times \R^N\).
It therefore allows to consider (i) direct-sum spaces for which we reveal that the convergence holds on both components and (ii) regularization \textit{semi-}norm via the space \(\R^N\). 

Proposition~\ref{prop:weakconvmeasure} is moreover preparatory to Theorem~\ref{theo:weak*convergence}, which gives sufficient conditions for the \(\Gamma\)-convergence of grid-based optimization problems towards their continuous-domain limits.

\begin{proof}
The proof follows the argument provided in~\cite[Appendix D.1]{duval2017sparseI}. We define the cost functional 
\begin{equation}
    J(w_T,w_S, \bm{c}) = \underset{(w_T,w_S,\bm{c}) \in \mathcal{M}_{h_T}(\R) \times \mathcal{M}_{h_S}(\T) \times \R^N}{\arg\min} \ \| \bm{y} -  \bm{\Phi}(w_T,w_S,\bm{c}) \|_2^2 +\lambda_T    \| w_T \|_{\MR} +   \lambda_S \| w_S \|_{\MS}
\end{equation}
for which we prove the two relations \eqref{eq:liminf} and \eqref{eq:limsup} over $\mathcal{X}' = \MR \times \MS \times \R^N$  and $\mathcal{X}_n' = \mathcal{M}_{h_{T,n}}(\T) \times \mathcal{M}_{h_{S,n}} \times \R^N$ with $(h_{T,n},h_{S,n}) \rightarrow_n (0,0)$. \\

\textit{Liminf \eqref{eq:liminf}.} 
Assume that $(w_{T,n} , w_{S,n}, \bm{c}_n) \underset{n \rightarrow \infty}{\overset{*}{\longrightarrow}} (w_{T} , w_{S}, \bm{c})$. In particular, $w_{T,n} \underset{n \rightarrow \infty}{\overset{*}{\longrightarrow}} w_{T}$ in $\MS$ and $\liminf_n \| w_{T,n}\|_{\MR} \geq \| w_T \|_{\MR}$ (weak*-lower semi-continuity of $\| \cdot \|_{\MR}$). Similarly, $\liminf_n \| w_{S,n}\|_{\MS} \geq \| w_S \|_{\MS}$. Moreover, the weak*-convergence implies that $\bm{\Phi}(w_{T,n},w_{S,n},\bm{c}_n) \rightarrow_n \bm{\Phi}(w_T,w_S,\bm{c})$. 
These facts imply that
\begin{align}
    \liminf_n J(w_{T,n} , w_{S,n}, \bm{c}_n)
   & \geq 
    \liminf_n \| \bm{y} - \bm{\Phi}(w_{T,n},w_{S,n},\bm{c}_n) \|_2^2
    + \lambda_T \liminf_n \| w_{T,n}\|_{\MR}
    + \lambda_S \liminf_n \| w_{S,n}\|_{\MS} \nonumber \\
   & \geq 
    \| \bm{y} - \bm{\Phi}(w_{T},w_{S},\bm{c}) \|_2^2
    + \lambda_T   \| w_{T}\|_{\MR}
    + \lambda_S  \| w_{S}\|_{\MS}  = J(w_T,w_S,\bm{c}), 
    \end{align}
as expected. \\

\textit{Limsup \eqref{eq:limsup}.}
We fix $(w_T,w_S,\bm{c}) \in \MR \times \MS \times \R^N$ and set
\begin{equation}
    w_{T,n} = \sum_{k\in\Z} w_T( [kh_{T,n}, (k+1)h_{T,n}) ) \delta(\cdot - kh_{T,n}) \text{ and }
    w_{S,n} = \sum_{0\leq k \leq (n_S-1)} w_S( [kh_{S,n}, (k+1)h_{S,n}) ) \mathrm{III}(\cdot - kh_{S,n}).
\end{equation}
We moreover set $\bm{c}_n = \bm{c}$. Then, we will show that
\begin{equation} \label{eq:intermweakstar}
    (w_{T,n}, w_{S,n}, \bm{c}_n) \weakconv (w_T,w_S,\bm{c}).
\end{equation}
For $\varphi \in \CR$, we define the modulus of continuity $\omega_{\varphi}(h) = \sup_{|x-y| \leq h} |\varphi(x) - \varphi(y)|$ for $h\geq 0$. Then, the function $\varphi$ being continuous and vanishing, it is absolutely continuous and therefore $\omega_{\varphi}(h) \rightarrow 0$ when $h\rightarrow 0$. We therefore have that
\begin{align}
    | \langle w_{T,n} - w_T , \varphi\rangle_{\MR\times \CR} |
    & = \big| \sum_{k\in \Z} \int_{kh_{T,n}}^{(k+1)h_{T,n}} (\varphi(x) - \varphi(kh_{T,n}))  \mathrm{d}w_T (x) \big| \nonumber \\
    & \leq \sup_{|x-y| \leq h_{T,n}} |\varphi(x) - \varphi(y)| \sum_{k\in\Z} \big|   \int_{kh_{T,n}}^{(k+1)h_{T,n}}  \mathrm{d}w_T (x) \big| \nonumber \\
    & \leq  \omega_{\varphi}(h_{T,n}) \| w_T \|_{\MR} \rightarrow_n 0,
\end{align}
proving that $w_{T,n} \weakconv w_T$ in $\MR$. Similarly, $w_{S,n} \weakconv w_S$ in $\MS$, which proves \eqref{eq:intermweakstar}. 

We moreover have that 
\begin{equation}
   \| w_{T,n} \|_{\MR} 
   = \sum_{k\in \Z} |w_T( [kh_{T,n}, (k+1)h_{T,n}) )| \leq  \sum_{k\in \Z} |w_T| ( [kh_{T,n}, (k+1)h_{T,n}) ) =
   \| w_{T} \|_{\MR}
\end{equation}
and similarly that $\| w_{S,n} \|_{\MS} \leq \| w_{S} \|_{\MS}$. The weak*-convergence implies moreover that $\bm{\Phi}(w_{T,n},w_{S,n},\bm{c}_n)$ converges to $\bm{\Phi}(w_T,w_S,\bm{c})$. We deduce from the previous facts that
\begin{equation}
    \underset{n}{\lim\sup} \  J(w_{T,n} , w_{S,n}, \bm{c}_n) \leq \| \bm{y} - \bm{\Phi}(w_T,w_S,\bm{c}) \|_2^2 + \lambda_T \| w_{T} \|_{\MR} + \lambda_S \| w_{S} \|_{\MS} = J(w_{T} , w_{S}, \bm{c})
\end{equation} 
and the limsup upper bound~\eqref{eq:limsup} is proved. \\

The above results demonstrate that $\mathcal{W}_{h_T,h_S}$ $\Gamma$-converges towards $\mathcal{W}$. Then, 
for any solution 
\begin{equation} \label{eq:smallthingtoreuse}
    (\tilde{w}_{T,n} , \tilde{w}_{S,n}, \tilde{\bm{c}}_n) \in \mathcal{W}_{h_{T,n}, h_{S,n}},
\end{equation}
we have that $\lambda_T \| \tilde{w}_{T,n}\|_{\MR} + \lambda_S   \| \tilde{w}_{S,n}\|_{\MS} \leq \frac{\|\bm{y}\|_2^2}{2} = J(0,0,\bm{0})$. 
Moreover, $(\tilde{\bm{c}}_n)_{n\in \Z}$ is a bounded sequence, what can be proved by contradiction.
Assuming that the sequence is unbounded, it admits a subsequence such that $\|\bm{c}_{\psi(n)}\|_2 \rightarrow_n \infty$, hence $J(\tilde{w}_{T,\psi(n)} , \tilde{w}_{S,\psi(n)}, \tilde{\bm{c}}_{\psi(n)}) \rightarrow_n \infty$ by coercivity of $J$. This is impossible since $J(\tilde{w}_{T,\psi(n)} , \tilde{w}_{S,\psi(n)}, \tilde{\bm{c}}_{\psi(n)}) \leq J(0,0,\bm{0})$ is bounded.
The sets $\mathcal{W}_{h_{T,n}, h_{S,n}}$ are therefore all included in a weak*-compact subset of $\MR\times \MS \times \R^N$ and therefore the sequence $(\tilde{w}_{T,n} , \tilde{w}_{S,n}, \tilde{\bm{c}}_n)$ admits accumulation points. 

According to~\cite[Theorem 7.8]{dal2012introduction} and due to the $\Gamma$-convergence, these accumulation points are in $\mathcal{W}$. Finally, \eqref{eq:lastconvstar} is deduced from the fact that a sequence in a compact space with a unique accumulation point (in this case, the unique element of $\mathcal{W}$) converges to this accumulation point. 
\end{proof}

    \begin{theorem}\label{theo:weak*convergence}
    We assume that we are in the conditions of Theorem~\ref{theo:RTfull} where $\mathcal{E}$ is given by \eqref{eq:quadraE}.  
     We define $\mathcal{V}$ as in \eqref{eq:mainoptisolutionset} and set, for $h_T > 0$ and $h_S>0$ such that $1/h_S \in \N$,
     \begin{equation}
    \label{eq:mainoptisolutionsetdiscretized} 
        \mathcal{V}_{h_T,h_S}  = \underset{(f_T,f_S) \in \MhThS}{\arg\min} \quad \| \bm{y} - \bm{\Phi}(f_T + f_S)) \|_2^2  + \lambda_T 
       \lVert \LT f_T \rVert_{\MR} +  \lambda_S \lVert \LS f_S \rVert_{\MS} . 
\end{equation}    
 Then, $ \mathcal{V}_{h_T,h_S} $  $\Gamma$-converges to $ \mathcal{V} $ when $(h_T, h_S) \rightarrow (0,0)$. \\
    
    We consider sequences $(h_{T,n})_{n\geq 0}$ and $(h_{S,n})_{n\geq 0}$ such that $1/ h_{S,n} \in \N$ and $(h_{T,n}, h_{S,n}) \rightarrow_n (0,0)$.
    Let $(\tilde{f}_{T,n} , \tilde{f}_{S,n}) \in \mathcal{V}_{h_{T,n},h_{S,n}}$ for each $n \geq 0$. Then, the sequence $(\tilde{f}_{T,n} , \tilde{f}_{S,n})$ has accumulation points for the weak*-topology on $\MLTLS$, and any of this accumulation point is in $\mathcal{W}$. In particular, if the solution of \eqref{eq:Woptipbmeasure} is unique and denoted by $(\tilde{f}_{T} , \tilde{f}_{S})$, then we have that
    \begin{equation}
        (\tilde{f}_{T,n} , \tilde{f}_{S,n}) \underset{n \rightarrow \infty}{\overset{*}{\longrightarrow}} (\tilde{f}_{T} , \tilde{f}_{S}).
    \end{equation}
    \end{theorem}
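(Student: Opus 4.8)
The plan is to reduce the statement to the already-established Proposition~\ref{prop:weakconvmeasure} through the same change of variables that reduced Theorem~\ref{theo:RTfull} to Proposition~\ref{prop:RTmeasure}. Concretely, I would introduce the map $\Xi : \MLTLS \to \MR \times \MS \times \R^N$ sending a pair $(f_T,f_S)$ to $(\LT f_T, \LS f_S, \bm{c})$, where $\bm{c} \in \R^N$ collects the finite-dimensional null-space degrees of freedom isolated in the direct-sum decomposition of Proposition~\ref{prop:directsumdecomposition} (the trend polynomial coefficients when $N_T \geq 1$, the seasonal constant when $N_T=0$ and $N_S \geq 1$, and nothing when $N_T=N_S=0$). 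By that decomposition and the predual identities underlying Proposition~\ref{prop:generalizedRieszMarkov}, $\Xi$ is a linear bijection whose inverse reconstructs $(f_T,f_S)$ from $(w_T,w_S,\bm{c})$ via the Green's functions $\psi_{\LT}, \rho_{\LS}$ and the monomial basis. Under $\Xi$, the native-space cost in \eqref{eq:mainoptisolutionsetdiscretized} becomes exactly the measure-space cost of \eqref{eq:Woptipbmeasurediscretized}: the regularization terms coincide because $\lVert \LT f_T \rVert_{\MR} = \lVert w_T \rVert_{\MR}$ and $\lVert \LS f_S \rVert_{\MS} = \lVert w_S \rVert_{\MS}$, while the data term transforms into $\lVert \bm{y} - \bm{\Psi}(w_T,w_S,\bm{c}) \rVert_2^2$ with $\bm{\Psi}$ as in the proof of Theorem~\ref{theo:RTfull}.

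Next I would check that $\Xi$ transports the discretized search space onto the one used in Proposition~\ref{prop:weakconvmeasure}. A pair lies in $\MhThS$ exactly when $\LT f_T \in \mathcal{M}_{h_T}(\R)$ and $\LS f_S \in \mathcal{M}_{h_S}(\T)$, so $\Xi$ restricts to a bijection of $\MhThS$ onto $\mathcal{M}_{h_T}(\R) \times \mathcal{M}_{h_S}(\T) \times \R^N$, the finite-dimensional factor being left undiscretized in both formulations. Consequently $\Xi$ carries the discretized native problems $\mathcal{V}_{h_T,h_S}$ exactly onto the measure problems $\mathcal{W}_{h_T,h_S}$, and the continuous problem $\mathcal{V}$ onto $\mathcal{W}$.

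The crucial point, and the one I expect to be the main obstacle, is that $\Xi$ must be a homeomorphism for the respective \emph{weak*}-topologies, not merely a norm isomorphism, since the two conditions \eqref{eq:liminf}--\eqref{eq:limsup} defining $\Gamma$-convergence are phrased via weak*-convergent sequences. I would establish this by invoking the predual identifications $(\CLT)' = \MLT$ and $(\CLS)' = \MLS$ together with their zero-mean variants, which show that $(f_{T,n},f_{S,n}) \overset{*}{\to} (f_T,f_S)$ in $\MLTLS$ is equivalent to the simultaneous weak*-convergences $\LT f_{T,n} \overset{*}{\to} \LT f_T$ in $\MR$, $\LS f_{S,n} \overset{*}{\to} \LS f_S$ in $\MS$, and $\bm{c}_n \to \bm{c}$ in $\R^N$; equivalently, testing against $\CLTLS$ splits along the direct sum $\LT^* \CR \oplus (\mathcal{P}_{N_T}(\R))'$ used to construct $\CLT$. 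Once this equivalence is secured, each weak*-convergent sequence in one picture corresponds to a weak*-convergent sequence in the other with identical cost, so the liminf and limsup inequalities proved in Proposition~\ref{prop:weakconvmeasure} transfer verbatim, yielding the $\Gamma$-convergence of $\mathcal{V}_{h_T,h_S}$ to $\mathcal{V}$.

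Finally, I would deduce the convergence of the discrete minimizers exactly as in the closing argument of Proposition~\ref{prop:weakconvmeasure}. Evaluating the cost at $(0,0)$ bounds $\lambda_T \lVert \LT \tilde{f}_{T,n} \rVert_{\MR} + \lambda_S \lVert \LS \tilde{f}_{S,n} \rVert_{\MS}$ by $\tfrac{1}{2}\lVert \bm{y} \rVert_2^2$ uniformly in $n$, while coercivity of the transported cost forces the finite-dimensional coordinates $\bm{c}_n$ to remain bounded; by Banach--Alaoglu the sequence $(\tilde{f}_{T,n}, \tilde{f}_{S,n})$ then lives in a weak*-compact subset of $\MLTLS$ and admits accumulation points. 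Invoking \cite[Theorem 7.8]{dal2012introduction} together with the $\Gamma$-convergence, every such accumulation point minimizes the continuous problem, hence lies in $\mathcal{V}$; and when the continuous solution is unique, a sequence in a compact set with a single accumulation point converges to it, giving the asserted weak*-convergence $(\tilde{f}_{T,n}, \tilde{f}_{S,n}) \overset{*}{\to} (\tilde{f}_{T}, \tilde{f}_{S})$.
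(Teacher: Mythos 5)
Your reduction strategy---transporting the native-space problems to measure-space problems through $\Xi(f_T,f_S)=(\LT f_T,\LS f_S,\bm{c})$ and then invoking Proposition~\ref{prop:weakconvmeasure}---is exactly the paper's route, and your closing argument (uniform bound on the TV norms, boundedness of $\bm{c}_n$ by coercivity, Banach--Alaoglu, \cite[Theorem 7.8]{dal2012introduction}, then uniqueness) matches the paper's as well. There is, however, a genuine gap at the pivot of your argument: the claim that $\Xi$ is a bijection of $\MLTLS$ onto $\MR\times\MS\times\R^N$, and hence that it carries $\mathcal{V}_{h_T,h_S}$ and $\mathcal{V}$ \emph{exactly} onto $\mathcal{W}_{h_T,h_S}$ and $\mathcal{W}$, is false in every case except $N_T=N_S=0$. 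If $N_S\geq 1$, the range of $\LS$ on $\Sp'(\T)$ contains only zero-mean distributions (there is no $f$ with $\LS f=\mathrm{III}$; see the paper's footnote on periodic Green's functions), and if $N_T\geq 1$ with $N_S=0$, the normalization $f_S\in\MLSz$ forces $\langle \LS f_S,1\rangle_{\MS\times\CT}=\widehat{L}_S[0]\,\widehat{f}_S[0]=0$. Thus the image of $\Xi$ is the proper, weak*-closed subspace $\MR\times\{w_S\in\MS:\ \langle w_S,1\rangle_{\MS\times\CT}=0\}\times\R^N$, and the transported problems are \emph{constrained} measure problems. Proposition~\ref{prop:weakconvmeasure} is stated for the unconstrained space; its minimizers and, more importantly for $\Gamma$-convergence, its limsup recovery sequences have no a priori reason to respect the constraint, so the proposition cannot be transferred ``verbatim'' as you assert.

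The fix is precisely the point the paper makes explicit in its proof: rerun the $\Gamma$-convergence argument of Proposition~\ref{prop:weakconvmeasure} on the constrained space. The liminf half is unaffected, because the constraint set is weak*-closed ($w\mapsto\langle w,1\rangle_{\MS\times\CT}$ is weak*-continuous since $1\in\CT$). The half that needs checking is the limsup: given $w_S$ with $\langle w_S,1\rangle_{\MS\times\CT}=0$, the recovery sequence built by mass-lumping, $w_{S,n}=\sum_{k} w_S\bigl([kh_{S,n},(k+1)h_{S,n})\bigr)\,\mathrm{III}(\cdot-kh_{S,n})$, has total mass $w_S(\T)=0$, hence stays in the constraint set and corresponds under $\Xi^{-1}$ to a genuine element of $\mathcal{M}_{h_{T,n},h_{S,n}}(\LT,\LS)$. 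Without this observation, your recovery sequence could fall outside the image of $\Xi$, and the limsup inequality would say nothing about the discretized native problem. Once this short but indispensable verification is added, your proof is complete and coincides with the paper's.
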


    \begin{proof}
    Theorem~\ref{theo:weak*convergence} is deduced from Proposition~\ref{prop:RTdiscretized} in the same way Theorem~\ref{theo:RTfull} was deduced from Proposition~\ref{prop:RTmeasure}. We distinguish three cases: (i) $N_T = N_S = 0$, (ii) $N_T \geq 1$, and (iii) $N_T = 0$ and $N_S\geq 1$. We only treat the case (ii) thereafter, the two others being similar and simpler. 
    
    Any $(f_{T,n},f_{S,n}) \in \mathcal{M}_{h_{T,n},h_{S,n}}(\LT,\LS)$ can be written as 
     \begin{align} \label{eq:discretizedfTfS}
        f_{T,n}(t) &= \sum_{k \in \Z} a_{k,n} \psi_{\LT} (t - h_{T,n} k) + \sum_{m=1}^{N_T} c_{m,n} t^{m-1}, \nonumber \\
        f_{S,n} (t) &= \sum_{1\leq k \leq n_S} b_{k,n} \rho_{\LS} (t - h_{S,n} k),
    \end{align}
    with $\sum_{1\leq k \leq n_S} b_{k,n} = 0$ for any $n \geq 0$. In particular, denoting $w_{T,n} = \LT f_{T,n}$ and $w_{S,n} = \LS f_{S,n}$, we have that $(f_{T,n},f_{S,n}) \in \mathcal{V}_{h_{T,n}, h_{S,n}}$ if and only if $(w_{T,n},w_{S,n}, \bm{c}_n)$ is solution of 
\begin{equation}
    \label{eq:newoptiforsituation2bisagain} 
            \mathcal{W}_{h_{T,n}, h_{S,n}} = \underset{(w_{T,n},w_{S,n}, \bm{c}_n) \in \mathcal{M}_{h_{T,n}} (\R) \times \mathcal{M}_{h_{S,n}} (\T) \times \R^{N_T}}{\arg\min}  \tilde{\mathcal{E}}(\tilde{\bm{\Psi}}(w_{T,n}, w_{S,n}, \bm{c}_n))  + \lambda_T   \lVert  w_{T,n} \rVert_{\MR}  + \lambda_S \lVert  w_{S,n} \rVert_{\MS},  
\end{equation}
where $\tilde{\mathcal{E}}$ and $\tilde{\bm{\Psi}}$ are defined in \eqref{eq:newE} and \eqref{eq:newoptiforsituation2bis}. Similarly, the optimization problem $\mathcal{V}$ is in relation with $\mathcal{W}$ defined in \eqref{eq:newoptiforsituation2bis}. Using Proposition~\ref{prop:weakconvmeasure}, we show that $\mathcal{W}_{h_{T,n}, h_{S,n}}$ $\Gamma$-converges to $\mathcal{W}$ when $n \rightarrow \infty$. Note that the problem \eqref{eq:newoptiforsituation2bisagain} slightly differ from the ones covered in Proposition~\ref{prop:weakconvmeasure} since the data fidelity has an additional term and requires that the seasonal component has $0$-mean. However, we easily see that this property is preserved through the $\Gamma$-convergence proof (in particular, the sequence of measures for the limsup argument all have $0$ mean by construction).

The $\Gamma$-convergence of $\mathcal{W}_{h_{T,n}, h_{S,n}}$ to $\mathcal{W}$ therefore implies the one of $\mathcal{V}_{h_{T,n}, h_{S,n}}$ to $\mathcal{V}$ and the consequences for the accumulation points are direct applications of the equivalence between the optimization problems $\mathcal{V}$ and $\mathcal{W}$ and their discretized versions and the results of Proposition~\ref{prop:weakconvmeasure}. 
    \end{proof}

\subsection{Uniform Convergence of Grid-Based Solutions} \label{sec:weakconv}

The weak*-convergence results of Theorem~\ref{theo:weak*convergence} have interesting consequences for sampling-admissible operator pairs $(\LT,\LS)$. In this case, sampling evaluations can be used in the sensing process and therefore the weak*-convergence in $\MLTLS$ implies pointwise convergence.
Moreover, this pointwise convergence can be reinforced into a uniform convergence in compact domains under certain conditions~\cite{kelley2017general}. Theorem \ref{theo:uniform} exploits these facts to ensure new uniform convergence results for grid-based methods.
For any locally bounded function $f$ and $a \leq b$, we set $\| f \|_{\infty,[a,b]} = \sup_{a \leq t \leq b} |f(t)|$. 

\begin{theorem}\label{theo:uniform}
    We assume that we are in the conditions of Theorem~\ref{theo:weak*convergence} with the additional assumptions that (i) $(\LT, \LS)$ is sampling admissible and (ii) the operators have locally Lipschitz Green's function and Lipschitz periodic Green's function, respectively.
    
    We fix $(h_{T,n})_{n\geq 0}$ and $(h_{S,n})_{n\geq 0}$ such that $1/ h_{S,n} \in \N$ and $(h_{T,n}, h_{S,n}) \rightarrow_n (0,0)$ and consider a sequence of grid-based solutions $(\tilde{f}_{T,n} , \tilde{f}_{S,n}) \in \mathcal{V}_{h_{T,n},h_{S,n}}$ for each $n \geq 0$. \\
    
    Then, for any accumulation point $(\tilde{f}_{T} , \tilde{f}_{S}) \in \mathcal{V}$ such that $(\tilde{f}_{T,\psi(n)} , \tilde{f}_{S,\psi(n)}) \underset{n}{\overset{*}{\rightarrow}} (\tilde{f}_{T} , \tilde{f}_{S})$ for some subsequence, we have that 
    \begin{equation} \label{eq:this}
    \| \tilde{f}_{T,\psi(n)} - \tilde{f}_{T} \|_{\infty,[a,b]} \underset{n}{\rightarrow} 0
    \quad \text{and} \quad
    \| \tilde{f}_{S,\psi(n)} - \tilde{f}_{S} \|_{\infty} \underset{n}{\rightarrow} 0.
    \end{equation}
    
    If moreover the solution of the continuous-domain optimization problem \eqref{eq:mainopti} is unique and denoted by $(\tilde{f}_{T} , \tilde{f}_{S})$, then we have that, for any finite interval $[a,b] \subset \R$,
    \begin{equation} \label{eq:that}
    \| \tilde{f}_{T,n} - \tilde{f}_{T} \|_{\infty,[a,b]} \underset{n}{\rightarrow} 0
    \quad \text{and} \quad
    \| \tilde{f}_{S,n} - \tilde{f}_{S} \|_{\infty} \underset{n}{\rightarrow} 0.
    \end{equation}
\end{theorem}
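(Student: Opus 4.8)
The plan is to upgrade the weak*-convergence already furnished by Theorem~\ref{theo:weak*convergence} to uniform convergence, by combining pointwise convergence (coming from sampling admissibility) with equicontinuity of the solution families (coming from the Lipschitz Green's functions), and then invoking the fact that a pointwise-convergent equicontinuous sequence on a compact set converges uniformly. By the usual subsequence argument, the second assertion~\eqref{eq:that} follows from the first~\eqref{eq:this}: if the continuous-domain solution is unique, then by the weak*-compactness established in Theorem~\ref{theo:weak*convergence} every subsequence of $(\tilde f_{T,n},\tilde f_{S,n})$ admits a weak*-convergent sub-subsequence, whose limit must coincide with the unique solution; applying~\eqref{eq:this} along such sub-subsequences, every subsequence thus has a further subsequence converging uniformly (on the relevant sets) to $(\tilde f_T,\tilde f_S)$, which forces the full sequence to converge and yields~\eqref{eq:that}. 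I therefore concentrate on~\eqref{eq:this} along the fixed subsequence $\psi(n)$.

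First I would extract pointwise convergence. Sampling admissibility gives, via Proposition~\ref{prop:examplessamplingadm}, that $\delta(\cdot-t_0)\in\CLT$ and $\mathrm{III}(\cdot-t_0)\in\CLS$ for every $t_0\in\R$. Since $1\in\CLS$ always, the pairs $(\delta(\cdot-t_0),0)$ and $(0,\mathrm{III}(\cdot-t_0)-1)$ both lie in the predual $\CLTLS$ (subtracting the constant being needed to land in $\CLSz$ when $N_T\ge 1$, and harmless since the seasonal component then has zero mean). Testing the weak*-convergence $(\tilde f_{T,\psi(n)},\tilde f_{S,\psi(n)})\overset{*}{\to}(\tilde f_T,\tilde f_S)$ in $\MLTLS$ against these functionals yields $\tilde f_{T,\psi(n)}(t_0)\to\tilde f_T(t_0)$ and $\tilde f_{S,\psi(n)}(t_0)\to\tilde f_S(t_0)$ for each $t_0$.

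Next I would establish equicontinuity, using that the cost bound $\lambda_T\|\LT\tilde f_{T,n}\|_{\MR}+\lambda_S\|\LS\tilde f_{S,n}\|_{\MS}\le\tfrac12\|\bm y\|_2^2$ (obtained by comparison with the null competitor, exactly as in Proposition~\ref{prop:weakconvmeasure}) makes the innovations $w_{T,n}=\LT\tilde f_{T,n}$ and $w_{S,n}=\LS\tilde f_{S,n}$ uniformly bounded in total variation, say by a constant $C$. For the seasonal part, $\tilde f_{S,n}=\rho_{\LS}*w_{S,n}$ up to an additive constant that cancels in differences, so the global Lipschitz bound on $\rho_{\LS}$ over the compact circle gives $|\tilde f_{S,n}(t)-\tilde f_{S,n}(t')|\le C\,\mathrm{Lip}(\rho_{\LS})\,|t-t'|$, i.e.\ uniform equicontinuity on $\T$. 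For the trend, writing $\tilde f_{T,n}=\psi_{\LT}*w_{T,n}+p_n$ with $p_n$ a polynomial of degree at most $N_T-1$, the convolution term is controlled through the modulus of continuity of $\psi_{\LT}$ by $|\psi_{\LT}*w_{T,n}(t)-\psi_{\LT}*w_{T,n}(t')|\le C\,\omega_{\psi_{\LT}}(|t-t'|)$, while the convergence of the coefficients of $p_n$ (read off from the $\R^{N_T}$-component in the reduction of Proposition~\ref{prop:weakconvmeasure}) keeps them bounded and makes $\{p_n\}$ uniformly Lipschitz on any finite $[a,b]$; hence $\{\tilde f_{T,n}\}$ is equicontinuous on every compact interval.

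Finally, a pointwise-convergent equicontinuous sequence converges uniformly on compacta, which applied to $[a,b]$ for the trend and to the compact $\T$ for the seasonal part gives~\eqref{eq:this}. I expect the trend to be the main obstacle: in contrast with the periodic case, $\psi_{\LT}$ is only \emph{locally} Lipschitz while $w_{T,n}$ is supported on all of $\R$, so the equicontinuity estimate cannot rest on a single Lipschitz constant and must instead exploit the global uniform continuity guaranteed by $\psi_{\LT}\in\mathcal{C}_0(\R)$; in parallel one must separately tame the polynomial part $p_n$ by tracking the convergence of its coefficients.
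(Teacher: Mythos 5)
Your proposal is correct and, in its architecture, is the same as the paper's proof: pointwise convergence of both components from sampling admissibility (point evaluations are weak*-continuous on $\MLTLS$), equicontinuity from the uniform total-variation bound on the innovations (comparison with the zero competitor) combined with the Lipschitz properties of the Green's functions, the Ascoli-type fact that pointwise convergence plus equicontinuity yields uniform convergence on compacta (applied to $[a,b]$ for the trend and to $\T$ for the seasonal part), and the standard compactness/subsequence argument to pass from \eqref{eq:this} to \eqref{eq:that} when the solution of \eqref{eq:mainopti} is unique.

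Where you genuinely depart is in the trend equicontinuity estimate, and your instinct there is well founded. The paper expands $\tilde f_{T,n}(t)=\sum_{k\in\Z}a_{k,n}\psi_{\LT}(t-h_{T,n}k)+\sum_{m=1}^{N_T}c_{m,n}t^{m-1}$ and bounds the difference quotient on $[a,b]$ by $\|a_n\|_{\ell_1}\|\psi_{\LT}\|_{\mathrm{Lip},[a,b]}$ plus a polynomial contribution. However, for $x,y\in[a,b]$ the shifted arguments $x-h_{T,n}k$ and $y-h_{T,n}k$ sweep over all translates of $[a,b]$ as $k$ varies, so the constant actually needed is $\sup_{k,n}\|\psi_{\LT}\|_{\mathrm{Lip},[a-h_{T,n}k,\,b-h_{T,n}k]}$, i.e.\ a \emph{global} Lipschitz bound, which the stated hypothesis of a locally Lipschitz Green's function does not supply. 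This is exactly your closing remark that the estimate ``cannot rest on a single Lipschitz constant.'' Your substitute---bounding the increment of $\psi_{\LT}*w_{T,n}$ by $\|w_{T,n}\|_{\MR}$ times the global modulus of continuity of $\psi_{\LT}$, and invoking the uniform continuity of $\psi_{\LT}\in\mathcal{C}_0(\R)$---validly repairs this whenever $\LT$ is invertible, since Proposition~\ref{prop:examplessamplingadm} then identifies sampling admissibility with $\psi_{\LT}\in\mathcal{C}_0(\R)$ and $\rho_{\LS}\in\mathcal{C}(\T)$. Be aware, though, that when $N_T\geq 1$ (e.g.\ $\LT=\Dop^{N_T}$ with $\psi_{\LT}(t)=t_+^{N_T-1}/(N_T-1)!$) the Green's function is not in $\mathcal{C}_0(\R)$, so your $\mathcal{C}_0$-based fix does not cover that case; there one needs instead a globally Lipschitz Green's function (true for $N_T=2$, false for $N_T\geq 3$) or some decay control on the coefficients $a_{k,n}$. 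In short: same proof skeleton as the paper, but your handling of the trend term is more careful, and it isolates a point where the theorem's hypothesis as written is weaker than what the paper's own displayed estimate silently uses.
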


\begin{proof}
Let $(\tilde{f}_{T,n} , \tilde{f}_{S,n}) \in \mathcal{V}_{h_{T,n},h_{S,n}}$
and a weak*-convergent subsequence $(\tilde{f}_{T,\psi(n)} , \tilde{f}_{S,\psi(n)})$ whose weak*-limit is denoted by $(\tilde{f}_{T} , \tilde{f}_{S})$. Theorem~\ref{theo:weak*convergence} ensures that such subsequence exists and that its limit is in $\mathcal{V}$. We replace $(\tilde{f}_{T,n} , \tilde{f}_{S,n}) \in \mathcal{V}_{h_{T,n},h_{S,n}}$ by its weak*-converging subsequence (hence removing $\psi(n)$) to simplify the notations. \\
 
\textit{Pointwise convergence.} By assumption, $(\LT,\LS)$ is sampling admissible. In particular, this means that, for any $t_0 \in \R$, the functional 
\begin{equation}
    (f_T,f_S) \mapsto (f_T(t_0), f_S(t_0))
\end{equation}
is weak*-continuous over $\MLTLS$. Then, the weak*-convergence $(\tilde{f}_{T,n} , \tilde{f}_{S,n})$ to $(\tilde{f}_{T} , \tilde{f}_{S})$ implies that, for any $t_0 \in \R$, 
\begin{equation} \label{eq:truc} 
    \tilde{f}_{T,n}(t_0) \underset{n}{\rightarrow} \tilde{f}_{T}(t_0)  
    \quad \text{and} \quad       
    \tilde{f}_{S,n}(t_0) \underset{n}{\rightarrow} \tilde{f}_{S}(t_0)  . 
\end{equation}
In other words, $\left((\tilde{f}_{T,n} , \tilde{f}_{S,n})\right)_{n \geq 0}$ converges pointwise to $(\tilde{f}_{T} , \tilde{f}_{S})$. 

\textit{Lipschitz condition.} 
We show that the functions $\tilde{f}_{T,n}$ are locally Lipschitz with common Lipschitz bound. Let $a \leq b$. For any measurable function $g$, we set $\| g \|_{\mathrm{Lip},[a,b]} = \sup_{a \leq x < y \leq b} \frac{|g(y)-g(x)|}{|y-x|}$.  By assumption, we have that $\| \psi_{\LT} \|_{\mathrm{Lip},[a,b]} < \infty$. 

For any $n \geq 0$, we set
\begin{equation}
            \tilde{f}_{T,n}(t) = \sum_{k \in \Z} a_{k,n} \psi_{\LT} (t - h_{T,n} k) + \sum_{m=1}^{N_T} c_{m,n} t^{m-1} 
\end{equation}
Then, we have for any $a \leq x < y \leq b$, 
\begin{align}
    \frac{|\tilde{f}_{T,n}(y) - \tilde{f}_{T,n}(x) |}{|y-x|}
    & \leq \sum_{k\in\Z} |a_{k,n}| \frac{| \psi_{\LT} (y - h_{T,n} k)-\psi_{\LT} (x - h_{T,n} k)|}{|x-y|} + \sum_{m=1}^{N_T} |c_{m,n}| \frac{|y^{m-1} - x^{m-1} |}{|x-y|}\nonumber \\
    & \leq \| a_n \|_{\ell_1} \| \psi_{\LT} \|_{\mathrm{Lip},[a,b]} + \| \bm{c}_n \|_2    \left(\sum_{m=1}^{N_T} \|(\cdot)^{m-1} \|_{\mathrm{Lip},[a,b]} \right)^{1/2}, \label{eq:citethijustafter}
\end{align}
where we used the Cauchy-Schwartz inequality for the last inequality. 
We moreover observe that $\| a_n \|_{\ell_1} = \| \tilde{w}_{T,n} \|_{\MR}$, which is bounded, as we have seen in \eqref{eq:smallthingtoreuse}. We have also seen in the proof of Theorem~\ref{theo:weak*convergence} that the sequence $\| \bm{c}_n \|_2 $ is bounded. This shows, together with \eqref{eq:citethijustafter}, that 
\begin{equation}
   \| f_{T,n} \|_{\mathrm{Lip},[a,b]} \sup_{a \leq x < y \leq b} = \frac{|\tilde{f}_{T,n}(y) - \tilde{f}_{T,n}(x) |}{|y-x|} \leq C
\end{equation}
for some constant $C > 0$ independent from $n$. We show similarly that the family of functions $\tilde{f}_{S,n}$ are locally Lipschitz with common Lipschitz bound. The functions being $1$-periodic, we deduce that they are Lipschitz over $\R$ (not only locally). \\

\textit{Uniform convergence.} 
The family of functions $(\tilde{f}_{T,n})_{n\geq 0}$ converges pointwise to $\tilde{f}_{T}$. Moreover, the $\tilde{f}_{T,n}$ are locally Lipschitz with common Lipschitz bound, and are therefore equicontinuous over compact domains. The equicontinuity and pointwise convergence implies the uniform convergence~\cite[Theorem 15, Chapter 7]{kelley2017general}, hence we deduce that $\tilde{f}_{T,n}$ converges uniformly to $\tilde{f}_{T}$ on compact domains. Similarly, the sequence $(\tilde{f}_{S,n})_{n\geq 0}$ converges pointwise to $\tilde{f}_{S}$ and the $\tilde{f}_{S,n}$ are  equicontinuous (being Lipschitz with a common bound), hence we deduce that it converges uniformly to $\tilde{f}_{S}$. This proves \eqref{eq:this}, together with \eqref{eq:that} when the solution is unique.      
\end{proof}

\textbf{Remarks.} 
    Theorem~\ref{theo:uniform} generalizes~\cite[Theorem 2]{debarre2022part2}, which only considers (i) a purely periodic setting (single component reconstruction), (ii) Fourier measurements, and (iii) differential operators.
It provides general conditions for the convergence of grid-based methods towards their continuous-domain target problem and is, as far as we know, the first result of that kind with this level of generality.

In the context of seasonal-trend decomposition, it remarkably shows that both the seasonal and trend components are recovered by their respective grid-based approximations. This demonstrates the relevance of grid-based methods for composite signal reconstruction, which is also a novelty to the best of our knowledge.

\section{Conclusion}\label{sec:conclusion}

In this paper, we have developed a variational framework for the reconstruction of 1D continuous-domain functions, decomposed into trend and seasonal components, using generalized total-variation regularization. We introduced a functional-analytic approach to sparsity-promoting reconstruction, particularly suited for inverse problems with noisy linear measurements. Key contributions include the derivation of a representer theorem, the analysis of multi-component TV regularization, and the development of grid-based approximation schemes that converge to the continuous-domain solutions. \\

The seasonal-trend decomposition model enables efficient recovery of both components by combining data fidelity with structural regularization. Our representer theorem characterizes the solutions as sparse splines, and the grid-based methods provide an efficient computational approach with uniform convergence to the continuous-domain solutions. \\

This work lays the foundation for further research in 1D signal decomposition and offers practical methods for real-world applications in time-series analysis and other similar domains where noisy, high-dimensional data are prevalent. The rigorous theory and computational methods presented here make significant strides in advancing the understanding and application of seasonal-trend decomposition.

\bibliographystyle{plain}          
\bibliography{references} 

\end{document}